\documentclass[12pt]{amsart}
\usepackage{amsfonts,latexsym,amsthm,amssymb,amsmath,amscd,euscript,bibentry}
\usepackage[margin = 3cm]{geometry}
\usepackage[shortlabels]{enumitem}
\usepackage[utf8]{inputenc}
\usepackage{hyperref}
\usepackage{arydshln}
\usepackage{dirtytalk}
\usepackage{float}
\usepackage{multirow}
\usepackage[normalem]{ulem}
\usepackage{amsmath, amssymb, amsfonts, amsthm, amscd}
\usepackage{manfnt}
\usepackage{fancyhdr}
\usepackage{caption} 
\usepackage{mathtools}
\usepackage{blkarray}
\usepackage{hyperref}
\usepackage{xcolor}
\usepackage{tikz}
\usepackage{CJKutf8}
\usepackage{graphicx}
\graphicspath{ {./images/} }
\usepackage{float}
\usepackage{fullpage}
\usepackage{url}
\usepackage[all]{xy}
   \SelectTips{cm}{10}
\usepackage{setspace}
\usepackage{mathrsfs}
\usepackage{txfonts}
\usepackage{comment}
\hypersetup{
    colorlinks=true,
    linkcolor=red,
    filecolor=red,      
    urlcolor=red,
}
 \renewcommand{\bar}{\overline}
\renewcommand{\bar}{\overline}
\newtheorem{theorem}{Theorem}[section]
\newtheorem{proposition}[theorem]{Proposition}
\newtheorem{Lemma}[theorem]{Lemma}
\newtheorem{corollary}[theorem]{Corollary}

\newtheorem{Conjecture}[theorem]{Conjecture}

\theoremstyle{definition}
\newtheorem{definition}[theorem]{Definition}

\theoremstyle{remark}
\newtheorem{remark}{Remark}

\usepackage{fancyhdr}
\usepackage{caption} 
\usepackage{thmtools}
\usepackage[framemethod=TikZ]{mdframed}
	\mdfdefinestyle{mdrecbox}
		{
			linewidth=0.5pt,
			skipabove=12pt,
			frametitleaboveskip=5pt,
			frametitlebelowskip=0pt,
			skipbelow=2pt,
			frametitlefont=\bfseries,
			innertopmargin=4pt,
			innerbottommargin=8pt,
			nobreak=true,
		}
	\declaretheoremstyle
		[
			headfont=\bfseries,
			mdframed={style=mdrecbox},
			headpunct={\\[3pt]},
			postheadspace={0pt},
		]
		{thmrecbox}
	\declaretheorem[style=thmrecbox,name=Example, numberlike=theorem]{examplebox}

\catcode`,\active

\catcode`\,12

\newcommand*\HYPERskip{&}
\catcode`,\active
\newcommand*\pfq{
\begingroup
\catcode`\,\active
\def ,{\HYPERskip}
\doHyper
}
\catcode`\,12
\def\doHyper#1#2#3#4#5{
\, _{#1}F_{#2}\left[\begin{matrix}#3 \smallskip \\  #4\end{matrix} \; ; \; #5\right]
\endgroup
}

\catcode`,\active

\catcode`\,12

\newcommand{\C}{\mathbb{C}}

\newcommand{\Q}{\mathbb{Q}}
\newcommand{\Z}{\mathbb{Z}}

\newcommand{\g}{\mathfrak{g}}

\newcommand{\p}{\mathfrak{p}}

\newcommand{\tr}[0]{\operatorname{tr}}

\newcommand{\Gal}{\operatorname{Gal}}

\newcommand{\ol}[1]{\overline{#1}}

\makeatletter
\newcommand*\bigcdot{\mathpalette\bigcdot@{.7}}
\newcommand*\bigcdot@[2]{\mathbin{\vcenter{\hbox{\scalebox{#2}{$\m@th#1\bullet$}}}}}
\makeatother

\newcommand{\0}{\circ}

\DeclareMathOperator{\Frob}{Frob}

\NewCommandCopy{\originalsqrt}{\sqrt}
\RenewDocumentCommand{\sqrt}{om}{%
  \IfNoValueTF{#1}{\originalsqrt{#2}}{%
    \mspace{-3mu}\originalsqrt[#1]{#2}%
  }%
}

\usetikzlibrary{cd}

\definecolor{purple}{rgb}{0.59, 0.44, 0.84}

\begin{document}
\title{The Arithmetic of Reducible Rank 2 Hypergeometric Motives}
\author{Esme Rosen }
\date{}

\begin{abstract}
We study the arithmetic of the realizations for hypergeometric motives attached to ${}_3F_2(1)$ hypergeometric series defined over number fields, focusing on the case where the \'etale realization is reducible and irregular. We then use motivic techniques to prove new evaluation formulas for certain ${}_3F_2(1)$ series, understood as a period of the hypergeometric motive. In addition, we provide examples which are closely related to the exact values of $L$-functions for modular forms in special cases.
\end{abstract}
\maketitle

\section{Introduction}

There has been a broad interest since the time of Gauss in studying evaluations of classical hypergeometric functions in terms of special values of the gamma function, $\Gamma(z)$. The classical ${}_2F_1(z)$ hypergeometric series is known to be the solution to a 2nd order differential equation with regular singularities at $0$, $1$, and $\infty$. The singularity at 1 is a \textit{quasi-reflection}, meaning the hypergeometric series satisfies extra symmetries. Gauss showed that if the ${}_2F_1(1)$ series converges, its value is a quotient of gamma values, namely \begin{equation}\label{gausseval}
    \pfq{2}{1}{a&b}{1&c}{1}=\frac{\Gamma(c-a-b)\Gamma(c)}{\Gamma(c-b)\Gamma(c-a)}. 
\end{equation}From a modern perspective, Gauss evaluation implies that the \textit{hypergeometric motive} \cite{patrikistaylot} attached to the series degenerates to a rank 1 Fermat motive \cite{otsubo-regular-fermat}, i.e. a motive arising from a Fermat Curve, $X^M+Y^M=Z^M$, which, in particular, admits complex multiplication (CM). See Remark \ref{sysreal} for further discussion about what precisely we mean by a motive. The key geometric background is that the underlying systems of realizations each have rank 1, and so there is exactly one period associated to it up to an algebraic multiple. 

 The generalized hypergeometric functions are still solutions of a rigid differential equation with three singularities, with a quasi-reflection at 1. Therefore, it is natural to study generalized hypergeometric series evaluated at 1, and indeed this has been done since the time of Kummer; see \cite{aar,bailey,slater} for many classical results. As a general rule, evaluation formulas for  ${}_{n+1}F_n(1)$ series are expected to be fairly rare, as the rank of the motive gets larger with $n$. However, in the simplest case beyond $n=2$ of ${}_3F_2(1)$ motives, the rank is only 2. Thus, although there is no analogue of Gauss evaluation in general, such evaluation formulas appear to occur relatively more frequently in this setting. Examples of evaluation formulas at 1 were found by Dixon and Whipple for certain special families of hypergeometric data. These existing formulas for classical ${}_3F_2(1)$ are seemingly modeled after Gauss evaluation and similar results for ${}_2F_1(z)$ series. For instance, Dixon's formula states that  \begin{equation}\label{dixon}
   \pfq{3}{2}{1/2&1/2&r}{1&1&3/2-r}1=\frac{ 2\Gamma(5/4) \Gamma(3/4 - r) \Gamma(3/2 - r)}{\sqrt{\pi}\Gamma(3/4)
  \Gamma(1 - r) \Gamma(5/4 - r)}.
\end{equation} This formula only features one gamma quotient, despite the hypergeometric motive having rank 2. In this paper, we find a new type of evaluation formula for certain $_3F_2(1)$ series involving two (presumably) distinct gamma quotients by a geometric argument. Assume the Galois representation realized from the ${}_3F_2(1)$ motive is reducible and irregular, i.e. the Hodge--Tate weights are \textit{not} distinct; see Section \ref{regiireg} for further discussion. Then roughly stated, our main theoretical result states that there exist gamma quotients $G_1$ and $G_2$ and algebraic numbers $\alpha_1$ and $\alpha_2$ so that $${}_3F_2(HD,1)=\alpha_1\cdot G_1+\alpha_2\cdot G_2.$$ A precise statement is given in Theorem \ref{thm:decomp}, and requires some technical assumptions. So, we provide one infinite family of examples below, stated using the beta function, $B(a,b)=\frac{\Gamma(a)\Gamma(b)}{\Gamma(a+b)}$.

\begin{theorem}\label{mainconj}
    For $m>4$ an integer, there are algebraic numbers $\alpha_1$ and $\alpha_2$ depending on $m$ so that $$\pi B(1/2-2/m,1/2-1/m)\pfq{3}{2}{\frac{1}{2}&\frac{1}{2}&\frac{1}{m}}{1&1&\frac{1}{2}-\frac{1}{m}}{1}=\alpha_1\cdot B\left(1/4,1/m\right)^2+\alpha_2\cdot B\left(3/4,1/m\right)^2,$$ where $B(a,b)$ is the beta function.
\end{theorem}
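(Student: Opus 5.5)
The plan is to realize the left-hand side as a period of a rank $2$ hypergeometric motive and to split that motive into two rank $1$ Fermat (CM) pieces. Each piece then contributes one beta-value period. Concretely, we work with the hypergeometric datum $HD=\{\{1/2,1/2,1/m\},\{1,1,1/2-1/m\}\}$ over $K=\Q(\zeta_{4m})$ (or a suitable subfield).

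\textbf{Step 1: the period integral.} Apply the Euler integral representation
\[
{}_3F_2\left[\begin{matrix}a&b&c\\1&d\end{matrix};1\right]\cdot B(b,1-b)B(c,d-c)=\int_0^1\!\!\int_0^1 x^{b-1}(1-x)^{-b}y^{c-1}(1-y)^{d-c-1}(1-xy)^{-a}\,dx\,dy
\]
with $a=b=1/2$, $c=1/m$, $d=1/2-1/m$. This gives $B(1/2,1/2)=\pi$ and $B(c,d-c)=B(1/m,1/2-2/m)$. After a Pfaff/Thomae-type normalization, the prefactor on the left of Theorem \ref{mainconj} is, up to an algebraic multiple, exactly the factor that turns the ${}_3F_2(1)$ value into the integral of an algebraic differential form over a cycle on the affine surface
\[
w^{2m}=x^{m}(1-x)^{m}y^{2}(1-y)^{m-4}(1-xy)^{m},
\]
or on a smooth model of it. So the left side is a period of the $HD$-isotypic piece $H$ of the cohomology of this surface. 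I would double-check the Thomae step so that the normalizing factor matches $\pi B(1/2-2/m,1/2-1/m)$ exactly, up to algebraic numbers.

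\textbf{Step 2: reducibility and irregularity.} Next I verify that the datum satisfies the hypotheses of Theorem \ref{thm:decomp}. The key arithmetic input is the quasi-reflection at $1$. For ${}_3F_2(1)$ the monodromy at $1$ has a rank $1$ non-trivial part, and because $\alpha$ contains $1/2$ twice, the corresponding Jacobi-sum (Frobenius trace) expressions factor. So the $\ell$-adic realization of $H$ splits as a sum of two Hecke characters $\chi_1\oplus\chi_2$, and the Hodge--Tate weights coincide, which is the irregular case. I would prove the splitting by comparing traces of Frobenius. The finite-field hypergeometric sum ${}_3F_2(HD;1)_{\mathbb F_q}$ should reduce, by a finite-field analogue of the Dixon/Whipple transformation, to $J_1(\mathfrak p)+J_2(\mathfrak p)$, where $J_1$ and $J_2$ are Jacobi sums attached to $(1/4,1/m)$ and $(3/4,1/m)$, each squared. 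By Chebotarev and Brauer--Nesbitt this gives the semisimplified decomposition of the Galois representation.

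\textbf{Step 3: from the Galois splitting to the period identity.} With the hypotheses checked, Theorem \ref{thm:decomp} gives the decomposition $H\cong M_1\oplus M_2$ of realizations, where each $M_i$ is a rank $1$ Fermat motive. The Fermat periods are known (via Otsubo / Rohrlich--Anderson) to be beta values, so the period of $M_1$ is $B(1/4,1/m)^2$ and that of $M_2$ is $B(3/4,1/m)^2$, each up to algebraic factors. Write the cycle from Step 1 in terms of a de Rham basis adapted to the splitting, with Betti classes over $\overline{\Q}$. The period then becomes $\alpha_1$ times the period of $M_1$ plus $\alpha_2$ times the period of $M_2$, which is the claimed formula.

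\textbf{Main obstacle.} The hardest step is the passage from the Galois-side splitting to a splitting compatible with the de Rham and Betti structures over $\overline{\Q}$. This is presumably what the technical assumptions of Theorem \ref{thm:decomp} handle, for example via a Tate/Hodge-type input, or via explicit correspondences from Fermat surfaces. In the irregular case the Hodge filtration does not single out the summands, so one needs the algebraic cycles to exist. I would first try to write down explicit dominant maps from the Fermat curves of degree $4m$ to the surface, which makes the decomposition motivic unconditionally. The condition $m>4$ is what ensures convergence and the positivity of the exponents used in Step 1.
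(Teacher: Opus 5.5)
Your outline is the paper's own proof of Theorem \ref{mainconj2}:
\begin{itemize}
\item the Greene--McCarthy finite-field Dixon formula (Lemma \ref{deco}), together with Chebotarev and Brauer--Nesbitt, gives $\rho^{ss}\cong\chi_1\oplus\chi_2$, where the $\chi_i$ are finite-order twists of $J(k/4,1/m)^{2}$;
\item Otsubo's Jacobi-sum motives and Gross's computation give their periods $B(k/4,1/m)^2$;
\item Theorem \ref{thm:decomp} is then invoked to pass from the Galois splitting to periods.
\end{itemize}
Your base field $\Q(\zeta_{4m})$ is the right one: by Lemma \ref{deco} the trace vanishes at primes with $q\equiv 3\bmod 4$, so the two characters only exist once $i$ is adjoined.

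\textbf{The gap.} The genuine gap is exactly the step you call the main obstacle, and citing Theorem \ref{thm:decomp} does not close it. Its proof only produces the isomorphism \eqref{iso} of two-dimensional \emph{complex} vector spaces, obtained by tensoring $\overline{\Q_\ell}$-spaces up to $\C$ via Lemma \ref{comp}. Nothing makes that isomorphism the complexification of a $\overline{\Q}$-linear map of algebraic de Rham cohomology together with a compatible rational map of Betti cohomology. That is precisely what is needed to get $\mathrm{Per}(M_{HD})=A\,\mathrm{Per}(M_1\oplus M_2)\,B$ with $A,B\in GL_2(\overline{\Q})$. The Galois hypothesis enters only through dimension counts, so the same argument would ``prove'' the identity for any two rank-one motives.

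What you need is a morphism of Betti--de Rham structures over $\overline{\Q}$, for instance one induced by an algebraic correspondence. The Tate conjecture would supply this, but neither your proposal nor the paper constructs it. Your fallback does not supply it as stated either. A curve cannot dominate a surface, so you would need a product of Fermat curves or a Fermat surface mapping onto this cyclic cover of $\mathbb{P}^1\times\mathbb{P}^1$ branched along $x,y\in\{0,1\}$ and $xy=1$. None is exhibited.

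\textbf{The normalization in Step 1.} Your normalization hedge cannot succeed. The Euler integral gives $\pi B(1/m,1/2-2/m)$, as you computed. Its ratio to the printed prefactor is $\Gamma(1/2-1/m)^2/(\Gamma(1/m)\Gamma(1-3/m))$, and by Wolfart--W\"ustholz this is not algebraic. Already for $m=5$, the functions $t\mapsto\langle ta\rangle+\langle tb\rangle-\langle t(a+b)\rangle$ attached to $B(1/10,3/10)$ and $B(1/5,1/10)$ differ at $t=3$ (here $\langle\cdot\rangle$ is the fractional part). So the printed prefactor must be read as $\mathcal{B}(HD)=\pi B(1/m,1/2-2/m)$; compare \eqref{normfactor} and Corollary \ref{eval}.

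\textbf{A classical route that closes the gap.} With that normalization the statement has an unconditional proof that avoids motives. Put $P(r,s)=\pi B(r,s-r)\,{}_3F_2(\tfrac12,\tfrac12,r;1,s;1)$ and $F(c;e)={}_3F_2(a,b,c;d,e;1)$.
\begin{itemize}
\item The contiguous relation $e\,F(c;e)=(e-c)\,F(c;e+1)+c\,F(c+1;e+1)$ becomes $P(r,\tfrac12-r)=P(r,\tfrac32-r)+P(1+r,\tfrac32-r)$. This is the relation used in Lemma \ref{ngal}.
\item The first term is Dixon's sum, so it lies in $\overline{\Q}\,B(1/4,r)^2$ by Corollary \ref{dixon2}.
\item The second series is contiguous to a well-poised one. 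The three-term contiguous relations (the Lavoie--Grondin--Rathie extensions of Dixon's theorem) express it as a $\Q(r)$-combination of Dixon's sums at $(a,b,c)=(\tfrac12,\tfrac12,r)$ and $(\tfrac32,\tfrac12,r)$. The coefficients are finite for $0<r<1/4$.
\item Multiply by $\pi B(r,\tfrac12-2r)$ and apply duplication and reflection. The Dixon sum at $(\tfrac32,\tfrac12,r)$ then becomes an algebraic multiple of $B(3/4,r)^2$.
\end{itemize}
For $r=1/m$ with $m>4$ this proves the statement, and it also makes $\alpha_1,\alpha_2$ explicit.
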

Although the two beta values on the right hand side of the equation look similar outwardly, in reality they (likely) have different transcendental parts, namely their quotient is transcendental for all but finitely many choices of $m$. In Theorem \ref{lvalirr}, we show that when $m=8,12,24$, the transcendental parts are Chowla--Selberg periods with distinct fundamental discriminants, and so are known to be algebraically independent. Thus, this is generally not a single term evaluation as in \eqref{gausseval} or \eqref{dixon} written using two linearly equivalent beta values. See Remark \ref{transcendental} for further discussion. The original inspiration for Theorem \ref{thm:decomp} was the formulas in the finite field setting \cite{greene,mccart}, which have have a similar shape to Theorem \ref{mainconj}, see Lemma \ref{deco}. Thus, in a sense, we are motivated in reverse from the usual approach of e.g. Greene \cite{greene}, who diligently proved finite field versions of many known complex analytic formulae.

In contrast to Gauss evaluation, which classically is proved by analytic methods, the formula in Theorem \ref{mainconj} has a geometric proof and does not involve explicit algebraic numbers. In the second part of this paper, we provide several interesting examples where the formulas can be made explicit, and are related to the $L$-values of Hecke characters, or equivalently $GL_2$ automorphic forms with CM. This is related to a conjecture of Deligne \cite{deligne-l-values}, who predicts that the transcendental part for the critical $L$-values of a motivic $L$-function should be computed as the determinant of the period matrix for the motive. In the setting of hypergeometric motives, the periods are multiples of classical hypergeometric series, and so if the Galois representation is automorphic, it should be possible to write exact values of the associated automorphic $L$-function in terms of hypergeometric series. This heuristic was first introduced by Zagier \cite{zagierarith}, though certain specific examples were known well before. Most such results are proved using analytic techniques, see e.g. \cite{aroramondaltu,lilongtu,rogerswanzucker,zagierarith}. On the surface, the method used in this paper, called the Explicit Hypergeometric Modularity Method (EHMM) \cite{EHMMII}, also uses an analytic approach. However, following the framework of \cite{EHMMIII,rosen2,proc}, the EHMM is deeply connected to the motivic perspective. As part of this approach, in this paper we prove isomorphisms between the algebraic de Rham cohomology groups and vector spaces of modular forms. Our main theorem involving $L$-values is Theorem \ref{lvalirr} below, relating the evaluation formulas for the ${}_3F_2(1)$ hypergeometric series above to the $L$-values for modular forms.

 Before stating the theorem, to be more precise, we define the hypergeometric datum $$HD(r,s):=\{\{1/2,1/2,r\},\{1,1,s\}\},$$ which we verify has a Betti--de Rham structure $\mathfrak{H}(HD,1)_\Q$ over $\Q$ of rank $2\varphi(M)$, where $M$ is the least common denominator of $r$ and $s$.\footnote{This fact is well-known to experts, but to our knowledge is described only as a vector space over $\C$ in the literature, see e.g. \cite{fedorov}, and so we sketch the details in this paper.} Refer to Definition \ref{hdgdr} for the formal definition of $\mathfrak{H}(HD,1)_\Q$. Assume $\mathfrak{H}^{p,q}(HD,1)_\Q$ denotes the differentials of type $(p,q)$. If $f$ is a newform of level $N$, let $\mathcal{V}_f$ denote the vector space generated $f$ and its twists by Dirichlet characters $\chi$ of conductor dividing $M$. Finally, given a fundamental discriminant $-D$, let $\Omega_{-D}$ denote the associated Chowla--Selberg period, \begin{equation}\label{chowl}
      \Omega_{-D}:=\sqrt{\pi}\left(\prod_{i=1}^{D-1}\Gamma(i/D)^{\chi_D(i)}\right)^{1/2h'(-D)},
\end{equation} 
where  $h'(-D)$ is $1/2,1/3$ for $D=-4,-3$  and is the class number $h(-D)$ otherwise (see, for example, \cite{lilongtu,zagier123}).

 \begin{theorem}\label{lvalirr}
 The following statements are true.
 \begin{enumerate}
     \item [\normalfont \textbf{I.}] There exists a set of $(r,s)$, $\mathbb{S}_2^{irr}$, with 32 elements, so that for each $(r,s)\in \mathbb S_2^{irr}$, there exist CM newforms depending on $r$ and $s$ of weight 3 denoted by $f_1$ and $f_2$, of discriminant $-D_1$ and $-D_2$ respectively, with $$\mathfrak{H}^{2,0}(HD(r,s),1)_\Q\cong \mathcal{V}_{f_1}\oplus \mathcal{V}_{f_2}.$$
     \item [\normalfont \textbf{II.}] Let $\phi$ denote an element of $\widehat{(\Z/M\Z)^\times}$, for $M$ the least common denominator of $r,s$, and 1/2. Suppose that \begin{equation}\label{gammai}L(f_1\otimes \phi,1)=\gamma_1(\phi)\cdot \pi^{-1}\Omega^2_{-D_1}\quad\quad L(f_2\otimes \phi,1)=\gamma_2(\phi)\cdot \pi^{-1}\Omega^2_{-D_2}\end{equation} for $\gamma_i(\phi)$ algebraic numbers. Let $\{(r_j,s_j)\}_{i=1}^{\varphi(M)}$ denote the subset of $\mathbb S_2^{irr}$ of pairs equivalent to $(c\cdot r,c\cdot s)\mod \Z$ for some $c\in (\Z/M\Z)^\times$. Then there exist constants $\alpha_{1,c}$ and $\alpha_{2,c}$ so that \begin{equation}\label{alphaic}\pi B(s_j-r_j,s_j)\pfq{3}{2}{1/2&1/2&r_j}{1&1&s_j}{1}=\alpha_{1,j}\cdot \Omega_{{-D_1}}^2+\alpha_{2,j}\cdot \Omega^2_{-D_2}.\end{equation} Moreover, the algebraic numbers $\alpha_{i,j}$ are explicit for all $i$ and $j$ if and only if $\gamma_{i}(\phi)$ are explicit for all $i$ each $i\in\{1,2\}$ and $\phi$ in $\widehat{(\Z/M\Z)^\times}$.

 \end{enumerate}
\end{theorem}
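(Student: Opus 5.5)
The proof splits into two parts. Part I is a finite classification followed by a modularity computation. Part II is a period comparison that passes through the decomposition of Theorem~\ref{thm:decomp}.

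\textbf{Part I.} First we enumerate the data $HD(r,s)$ for which the Galois representation attached to $\mathfrak{H}(HD(r,s),1)_\Q$ is reducible and irregular.
\begin{itemize}
\item Reducibility is detected from the finite field formulas (Lemma~\ref{deco}). There the trace of Frobenius at a prime $\p$ splits as a sum of two Jacobi sum Hecke characters exactly when the data satisfy a quadratic-twist-type symmetry.
\item Irregularity pins down the Hodge types. Weight $2$ with the $(1,1)$ part vanishing in the relevant slot forces both pieces to have Hodge type $\{(2,0),(0,2)\}$.
\item Running over $M$ with $\varphi(M)$ small enough that each piece is a CM form with rational, or at least abelian, coefficients cuts the list down to a finite set. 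We check by direct enumeration that this set is $\mathbb{S}_2^{irr}$, with $32$ pairs.
\end{itemize}

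For each such pair we proceed as follows.
\begin{itemize}
\item Identify the two Jacobi sum Grössencharacters. Their theta series are weight $3$ CM newforms $f_1$ and $f_2$, with discriminants $-D_1$ and $-D_2$ read off from the quadratic fields generated by the characters.
\item Then apply the EHMM. Write the period $\int \omega$ of a holomorphic differential in $\mathfrak{H}^{2,0}$ as a ${}_3F_2$-type series in a modular parameter (a Hauptmodul $t(\tau)$ with $t\to 1$ giving the fiber at $1$). Pulling back the basis differentials along $t$ produces weight $3$ cusp forms.
\item Check via Sturm's bound that these cusp forms span the twist spaces $\mathcal V_{f_1}\oplus\mathcal V_{f_2}$. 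Galois-conjugating by $c\in(\Z/M\Z)^\times$ then accounts for all $\varphi(M)$ conjugate differentials.
\item The dimensions match: $\dim \mathfrak{H}^{2,0}=\varphi(M)$, which equals the number of twists counted with the two forms. So the map is an isomorphism.
\end{itemize}

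\textbf{Part II.} By Theorem~\ref{thm:decomp}, the normalized series $\pi B(s_j-r_j,s_j)\,{}_3F_2(\cdots;1)$ is a period of the class $\omega_j\in\mathfrak{H}^{2,0}$ against a rational Betti cycle. Under the isomorphism of Part I, $\omega_j$ corresponds to a combination $\sum_\phi a_{1,\phi}\, f_1\otimes\phi+\sum_\phi a_{2,\phi}\, f_2\otimes \phi$ with algebraic coefficients. These coefficients are explicit, because they come from inverting the character table of $(\Z/M\Z)^\times$.

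Next we compare periods of modular forms with $L$-values. The integral of $f\otimes\phi$ over the cycle (essentially $\int_0^{i\infty}$ after the Fricke/Atkin--Lehner normalization) equals $L(f\otimes\phi,1)$ up to an explicit algebraic multiple and a power of $\pi$. Substituting \eqref{gammai}, we get
\[
\alpha_{i,j}=\sum_\phi (\text{explicit algebraic})\cdot a_{i,\phi}\,\gamma_i(\phi).
\]
This proves \eqref{alphaic} and the ``if'' direction. For the ``only if'' direction, the matrix $\big(a_{i,\phi}\big)$ indexed by $j$ and $\phi$ is a twisted Gauss-sum/character matrix, hence invertible. So each $\gamma_i(\phi)$ is an explicit combination of the $\alpha_{i,j}$.

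To separate the $\alpha_{1,j}$ from the $\alpha_{2,j}$, we use that $\Omega_{-D_1}^2$ and $\Omega_{-D_2}^2$ are linearly independent over $\overline{\Q}$ when $D_1\ne D_2$. This holds by the algebraic independence of Chowla--Selberg periods for distinct discriminants. If $D_1=D_2$ occurred, the two pieces would merge, so we only need $\gamma_1(\phi)\Omega_{-D_1}^2+\gamma_2(\phi)\Omega_{-D_2}^2$.

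\textbf{Main obstacle.} The hardest step is the modular parametrization in Part I. It requires showing that the pullback of each basis differential of $\mathfrak{H}^{2,0}$ is exactly a weight $3$ cusp form of the predicted level, rather than merely a quasi-modular or meromorphic object, and doing so uniformly over all $32$ data. My plan is to treat the $32$ cases in orbits under $c\in(\Z/M\Z)^\times$ and under the standard ${}_3F_2(1)$ transformations (Thomae relations). This should reduce the verification to a few representative cases, each checked by comparing finitely many Fourier coefficients.
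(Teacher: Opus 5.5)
\textbf{The gap is in Part I.} You produce $f_1,f_2$ and prove they are CM via reducibility of $\rho_{HD,\ell,1}$ and a Jacobi-sum decomposition of Frobenius traces as in Lemma~\ref{deco}. That lemma (the Greene--McCarthy formula) only covers the well-poised data $HD(r,3/2-r)$.

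By contrast, $\mathbb S_2^{irr}$ is \emph{defined} as the pairs in the finite set $\mathbb S_2$ whose conjugate family is irregular and primitive. Here $\mathbb S_2$ is the set of $199$ holomorphic, congruence $\mathbb K_2(r,s)$, so $M\mid 24$. The number $32$ comes from enumerating inside $\mathbb S_2$, not from a coefficient-field condition. This set contains non-well-poised families, e.g.\ the one through $(1/24,5/24)$ (row 12 of Table~\ref{tab}). For that family the paper states explicitly that reducibility is not known. So for those pairs your argument produces neither the candidate forms nor the CM property. Even where Lemma~\ref{deco} applies, the characters are characters of $G_{\Q(\zeta_M)}$, and you would still have to descend them to imaginary quadratic fields to obtain classical weight $3$ newforms.

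The idea you are missing is that irregularity by itself forces CM. Normalize so that $\dim V_1^{2,0}=2$. Then Lemma~\ref{galois} gives $V_{M-1}^{2,0}=0$. By Lemma~\ref{coeffcong}, every Hecke eigenform in $\mathfrak S_{r,s}$ then has $a_p=0$ for all $p\equiv -1\pmod M$, a set of positive density, and Sato--Tate rules this out for non-CM forms.

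The eigenforms are then built in two steps. First diagonalize $T_p$ for $p\equiv 1\pmod M$ on each $V_k^{2,0}$. Then glue across $k$ using $V_k^{2,0}|T_p=V_{km}^{2,0}$ for $p\equiv m\pmod M$. Your dimension count also needs $f_1$ not to be a twist of $f_2$ when $\varphi(M)\ge 4$. The paper gets this from a one-dimensional $V_k^{2,0}$ with $k\neq 1$, which can enter only one of the two forms.

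Your stated ``main obstacle'' is not an obstacle. Holomorphy and modularity of $\mathbb K_2(r,s)(N\tau)$ are immediate from the eta-quotient form. The identification $\mathfrak S_{r,s}\cong\mathfrak H^{2,0}(HD(r,s),1)_\Q$ is Lemma~\ref{geom}. So no Thomae-relation reduction is needed. A case-by-case Sturm-bound comparison with known CM forms would be a legitimate substitute, but only after you drop the Jacobi-sum step as the source of the forms.

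\textbf{Part II is essentially sound once repaired.} Expanding each $\mathbb K_2(r_j,s_j)(N\tau)$ in the basis $\{f_i\otimes\phi\}$ and integrating is the same linear algebra as Lemma~\ref{chowlf}, with the change of basis inverted. However, it should not be routed through Theorem~\ref{thm:decomp}. That theorem assumes Hodge numbers $(2,0,0)$ and reducibility, which are false or unknown for several $(r_j,s_j)$ in these families, and its content is the decomposition $P=\gamma_1\mathcal G_1+\gamma_2\mathcal G_2$, not the statement you cite.

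Also, a $\overline{\Q}$-isomorphism of $\mathfrak H^{2,0}$ with modular forms does not by itself send the Betti cycle computing ${}_3F_2(1)$ to the modular symbol $\{0,i\infty\}$. The input you actually need is the explicit EHMM identity \eqref{lval}, $L(\mathbb K_2(r,s)(N\tau),1)=F(r,s)$. This is essentially the Euler integral \eqref{hypint} after the substitution $t=\lambda(\tau)$.

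Two smaller points:
\begin{enumerate}
\item The conjugate $\mathbb K_2(r_j,s_j)$ are not linearly independent in general; contiguous relations such as \eqref{contiguous} relate them. For the converse direction, invert on a basis.
\item For uniqueness of the $\alpha_{i,j}$ you only need $\overline{\Q}$-linear independence of $\Omega_{-D_1}^2$ and $\Omega_{-D_2}^2$, which follows from W\"ustholz's theorem. Their algebraic independence is still open.
\end{enumerate}
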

 
We provide several examples where the algebraic numbers in the evaluation formulas, and thus the $L$-values, can be made explicit. For instance, consider the datum $HD(1/8,3/8)$. Note $\mathfrak{H}(HD(1/8,3/8),1)_\Q$ is a rank 8 Hodge structure over $\Q$ with Hodge numbers $(4,0,4)$. Via Theorem \ref{lvalirr}, we have $$\mathfrak{H}^{2,0}(HD({1/8,3/8}),1)\cong \mathcal{V}_{f_1}\oplus \mathcal{V}_{f_2}.$$ A calculation shows that the newforms $f_1$ and $f_2$ have LMFDB labels 128.3.d.a and 128.3.d.b, with CM discriminants $-4$ and $-8$. To compute the $L$-value for these forms, we need evaluation formulas for each $(c/8,3c/8)\in \mathbb S_2^{irr}$, $c\in (\Z/8\Z)^\times=\{1,3,5,7\}$. When $c=1$, there are two such pairs: $(1/8,3/8)$ and $(1/8,11/8)$; when $c=3,5$ there is one pair respectively, $(3/8,9/8)$ and $(5/8,7/8)$; and for $c=7$, there are no pairs lying in $\mathbb S_2^{irr}$ satisfying the required congruence. We then observe that Dixon's formula \eqref{dixon} applies to all pairs listed above other than $(1/8,3/8)$; for example, one checks that $$\pi B(1/4,3/8)\cdot \pfq{3}{2}{1/2&1/2&1/8}{1&1&11/8}{1}=\frac{65\cdot 2^{1/4}}{3}\sin(\pi/8)\cdot \Omega_{-4}^2.$$ An argument using twists of $L$-values allows us to bootstrap these evaluations to produce the following: \begin{equation}\label{prelev}
  \frac{\sqrt{2}}{16}\cdot  \pi B(1/4,3/8)\cdot \pfq{3}{2}{1/2&1/2&1/8}{1&1&3/8}{1}=\sqrt{1+\sqrt{2}}\cdot \Omega_{-8}^2+\sqrt{-1+\sqrt{2}}\cdot \Omega_{-4}^2. 
\end{equation}  Theorem \ref{lvalirr} tells us that we should now be able to compute the $L$-values for $f_1$ and $f_2$, and indeed, we find that $$L(f_{128.3.d.a},1)=\frac{1}{4} \sqrt{-\frac{1}{2} + \frac{i}{2}}\pi^{-1}\Omega_{-4}^2 \quad\quad L(f_{128.3.d.b},1)=\frac{2\sqrt{2(1+\sqrt{2})}}{18}\cdot \pi^{-1}\Omega_{-8}^2.$$   
The details are proved in Section \ref{ex2}. In general, for the $(r,s)\in \mathbb S_2^{irr}$, we are able to make our $L$-values explicit in many cases. We also handle a few cases that are not irregular, but are CM, in Appendix \ref{imprim}.

\begin{theorem}\label{complex}
    Assume $f$ is one of the Hecke eigenforms listed in Table \ref{tab} excluding Columns 12 and 15. Then the constants $\gamma_i(\phi)$ in \eqref{gammai} can be made explicit for all $i$ and $\phi$ as in Theorem \ref{lvalirr}, and therefore the constants $\alpha_{i,c}$ in \eqref{alphaic} can be made explicit as well. In particular, we are able to find the exact $L$-values for all modular forms listed in in rows 1-4, 11, 13, and 14 of Table \ref{tab}.
\end{theorem}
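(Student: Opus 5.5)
The plan is to use Theorem \ref{lvalirr} to turn the statement into a finite list of explicit classical evaluations, made case by case over the entries of Table \ref{tab}. By part II of Theorem \ref{lvalirr}, the constants $\alpha_{i,c}$ are explicit exactly when all the $\gamma_i(\phi)$ are. So it suffices to determine every $\gamma_i(\phi)$, or equivalently to write the left side of \eqref{alphaic} explicitly for every pair $(r_j,s_j)$ in the Galois orbit $\{(c r, c s)\}$ inside $\mathbb S_2^{irr}$.

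The key steps, in order, are as follows.
\begin{enumerate}
\item For each orbit, list the pairs $(r_j,s_j)$, as in the $HD(1/8,3/8)$ example. Sort them into pairs where a classical single-term formula applies (Dixon \eqref{dixon}, Watson, Whipple, or a contiguous relation reducing to one of these) and pairs where none applies. For the first kind, the ${}_3F_2(1)$ value equals a gamma quotient. By Chowla--Selberg, that quotient is an explicit algebraic multiple of $\Omega_{-D_1}^2$ or $\Omega_{-D_2}^2$, with the multiple obtained from the reflection and multiplication formulas for $\Gamma$.
\item Use the isomorphism $\mathfrak H^{2,0}\cong \mathcal V_{f_1}\oplus\mathcal V_{f_2}$ of part I to write each period $\pi B(s_j-r_j,s_j)\,{}_3F_2$ as a linear combination of the $L(f_i\otimes\phi,1)$. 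The period matrix pairs the de Rham classes with Betti cycles, and the change of basis from the $\omega_j$ to the twists $f_i\otimes\phi$ is given by Gauss sums or characters of $(\Z/M\Z)^\times$, so it is explicit. This gives a linear system in the unknowns $\gamma_i(\phi)$, with one equation for each explicitly evaluated pair.
\item Add the functional-equation and twist relations among the $L(f_i\otimes\phi,1)$. Complex conjugation relates $\phi$ to $\bar\phi$. For a CM form, twisting by the quadratic character of discriminant $-D_i$ fixes $f_i$. These relations cut the number of unknowns down. Solving the system then bootstraps the Dixon-type evaluations into the missing ones, as in \eqref{prelev}.
\item Finally, check numerically against the $L$-values computed from LMFDB data, to pin down branch and sign choices for the algebraic constants.
\end{enumerate}

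The main obstacle is step 3. One has to show that, for every listed form except those in Columns 12 and 15, the classical closed-form evaluations give enough independent equations to determine all the $\gamma_i(\phi)$. I expect the excluded columns to be exactly the cases where the orbit contains too few Dixon/Whipple-evaluable pairs, so the system stays underdetermined.

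To close any remaining gaps, I would first try two supplements. One is contiguous relations between the data $(r_j,s_j)$ and $(r_j,s_j+1)$, which are still evaluable and differ by rational factors. The other is known explicit CM $L$-values at $s=1$ for weight 3 forms, namely Damerell-type values expressed through Hurwitz-type numbers for the lattice of discriminant $-D_i$; these fix $\gamma_i(\phi)$ directly for the untwisted forms.
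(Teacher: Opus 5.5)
\textbf{There is a genuine gap, exactly at the step you flagged: the relations in your step 3 produce no new equation.}

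Take Row 14 of Table \ref{tab}, the orbit of $HD(1/8,3/8)$.
\begin{enumerate}
\item Dixon (Corollary \ref{dixon2}) evaluates $F(1/8,11/8)$, $F(5/8,7/8)$ and $F(3/8,9/8)$.
\item The contiguous relation of Lemma \ref{ngal} removes $F(9/8,11/8)$.
\item That leaves one unknown, $X=F(1/8,3/8)+16F(9/8,11/8)$, which is exactly the quantity you want.
\end{enumerate}
The eigenform $f_2=f_{128.3.d.b}$ gives $L(f_2,1)=X+4\sqrt2\,F(3/8,9/8)$ and $L(f_2\otimes\phi,1)=X-4\sqrt2\,F(3/8,9/8)$. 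So you have two unknown $L$-values governed by one unknown real number.

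None of your step 3 relations touches $X$:
\begin{itemize}
\item For $M\mid 24$ every character of $(\Z/M\Z)^\times$ is quadratic, so $\phi=\bar\phi$.
\item Complex conjugation of the forms gives nothing, because every $F(r,s)$ is real.
\item The CM self-twist by $\chi_{-D_i}$ only identifies twists already counted in $\varphi(M)$.
\item The functional equation pairs $s=1$ with $s=2$, not with other values at $s=1$.
\end{itemize}

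Your first supplement fails as well. The contiguous relation linking $(1/8,3/8)$ to the evaluable $(1/8,11/8)$ is just a linear dependence among the $\mathbb{K}_2$ functions, so it unavoidably brings in the non-evaluable $F(9/8,11/8)$. Moreover, $F(1/8,3/8)$ is not an algebraic multiple of $F(1/8,11/8)$: by \eqref{prelev} it involves $\Omega_{-8}^2$. Row 13, with $F(1/12,5/12)$, has the same defect.

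A minor point: the coefficients $8i$ and $4\sqrt2$ in the eigenforms come from explicit Hecke computations on $q$-expansions (Theorem \ref{cons2}), not from characters or Gauss sums alone.

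\textbf{The missing idea is arithmetic, not a symmetry of the space of forms.} The form $f_{128.3.d.b}$ has a nontrivial inner twist, $f\otimes\phi=f^\sigma$ with $\sigma(\sqrt2)=-\sqrt2$. Shimura's theorem on the algebraicity and Galois equivariance of critical values then pins down the ratio $L(f,1)/L(f\otimes\phi,1)$ as an explicit algebraic number; the paper gets $\alpha'=-1+\sqrt2$. Substituting the two expressions above yields the three-term identity of Lemma \ref{threeterm2}. This writes $X$ as an explicit multiple of $F(3/8,9/8)\in\bar{\Q}\,\pi^{-1}\Omega_{-8}^2$. Only then do Corollaries \ref{44} and \ref{eval} follow.

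Your Damerell/Hurwitz fallback would be a genuinely different route, since an independent explicit evaluation of $L(f_{128.3.d.b},1)$ would also close the system. As written, though, it is only named, and extracting the exact algebraic constant that way is itself the hard part.

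\textbf{Rows 1--4 are also not covered.} They lie outside the $\gamma_i(\phi)$ framework of Theorem \ref{lvalirr}, which concerns $\mathbb{S}_2^{irr}$ only. For the imprimitive data you need Gauss's ${}_2F_1(1)$ evaluation after cancelling parameters. For $(1/8,9/8)$, which has no classical closed form, you need the Atkin--Lehner move $\mathfrak{S}_{1/8,9/8}|W_2=\mathfrak{S}_{1,9/8}$ of Appendix \ref{imprim}.
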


 It would be very interesting to relate the results of Theorem \ref{mainconj} with exact $L$-values of either Hecke $L$-functions or Hilbert modular forms with complex multiplication. In addition, via Theorem \ref{lvalirr}, we prove an evaluation formula, in Column 12 of Table \ref{tab} for the datum $HD(1/24,5/24)$, which is not recovered from Theorem \ref{thm:decomp}. In short, the essential hypothesis of Theorem \ref{thm:decomp} is that a hypergeometric Galois representation is reducible, and this is not yet proven for $HD(1/24,5/24)$. However, we are able to prove the evaluation formula using the technique of Theorem \ref{lvalirr}. This raises several additional questions.

 \smallskip
 
 \noindent\textbf{Question 1}: Does the evaluation of ${}_3F_2(HD(1/24,5/24),1)$ fit into an infinite family like the well-poised data? 

 \smallskip

 A closely related question is whether it is possible to compute the constants $\alpha_1$ and $\alpha_2$ in Theorem \ref{mainconj} explicitly, perhaps using an analytic method. One hopes that these evaluations are related to $L$-values of other Hecke $L$-functions/Hilbert modular forms.

 \smallskip
 
 \noindent \textbf{Question 2}: In which cases can we make our evaluation formulas explicit?

 \smallskip

 An answer to Question 2 for $HD(1/24,5/24)$ would produce the exact $L$-values for the underlying modular forms in Theorem \ref{lvalirr}.

This paper is organized as follows. First, in the Preliminaries, we establish our notation and recall the necessary facts from the Appendix to \cite{rosen2}. Section \ref{proofmainconj} appeals to the theory of motives/systems of realizations to prove Theorem \ref{mainconj}. In the first part of Section \ref{prooflvalirr}, we review the Explicit Hypergeometric Modularity Method and  define what the set $\mathbb{S}_2^{irr}$ actually is. Then in the remainder of Section \ref{prooflvalirr}, we prove Theorem \ref{lvalirr} in two parts, and in Section \ref{ex}, we prove Theorem \ref{complex} by computing each $L$-value explicitly. In the appendices, we address two related issues outside the main scope of this paper. In Appendix \ref{imprim}, we compute the $L$-values for the degenerate cases arising in the tables of \cite{rosen2}, to complete the computation of the $L$-value for every modular form listed in that paper. In Appendix \ref{tables}, we write down some results about gamma quotients that simplify the exposition of Section \ref{ex}. The tables in this Appendix states results that are implicit in \cite{proc}, but which are difficult to see if you are unfamiliar with that paper. We hope this Appendix will be useful for other researchers working with explicit gamma quotients as well.

\subsection{Acknowledgments}
The author is pleased to thank Cameron Franc, Ling Long, Akio Nawagawa, David Roberts, and Fang-Ting Tu for helpful discussions about the content of this paper.

\section{Preliminaries}

\subsection{Special Functions}\label{spcfun}

The gamma function is defined as $$\Gamma(x)=\int_0^\infty t^{x-1}e^{-t}\, dt,$$ and the beta function is $$B(a,b)=\int_0^1t^{a-1}(1-t)^{b-1}\, dt.$$ The beta function is related to the gamma function via the formula \begin{equation}\label{betagamma}
    B(a,b)=\frac{\Gamma(a)\Gamma(b)}{\Gamma(a+b)}.
\end{equation} We will also frequently use the following well known properties of the gamma function: for any real number $x\not \in \Z$ and $x>0$, and positive integer $m$ \begin{itemize}
    \item  $\Gamma(x+1)=x\Gamma(x)$ (the functional equation) 
    \item $\displaystyle \Gamma(1-x)\Gamma(x)=\frac{\pi}{\sin(\pi x)}$ (the reflection formula)
    \item $\displaystyle \prod_{k=0}^{m-1}\Gamma(x+k/m)=(2\pi)^{\frac{m-1}{2}}m^{1/2-mx}\Gamma(mx)$ (the multiplication formula) 
\end{itemize}\label{gamma}
   We use the language multiplying by $m$ when using the multiplication formula. The properties above are proved in e.g. \cite{aar}. Finally, we introduce the notion of the rising factorial, or Pochhammer symbol \begin{equation}\label{poch}
       (a)_k=\frac{\Gamma(a+k)}{\Gamma(a)}=a(a+1)(a+2)\dots (a+k-1).
   \end{equation} 

 Define a pair of multisets $$HD=\{\{a_1,\dots a_n\},\{1, b_1\dots b_{m-1}\}\}$$ as a \textit{hypergeometric datum} and the hypergeometric function $${}_nF_m(HD,z)=\sum_{k=0}^\infty\frac{(a_1)_k(a_2)_k\dots(a_n)_k}{k!(b_1)_k(b_2)_k\dots (b_{m-1})_k}z^k.$$  This is the generalized hypergeometric series, as introduced by Gauss. We let $M$ denote the least common positive denominator of all $a_i$ and $b_i$. The values of $M$ depends on $HD$, but this should always be clear from the context.

There is also an integral definition of a hypergeometric series, \begin{equation}\label{hypint}
    \pfq{n+1}{n}{a_0\dots a_{n-1},r}{1\dots b_{n-1},s}{z}=\frac{1}{\mathcal{B}(HD)}\int_0^1t^{r-1}(1-t)^{s-r-1}\pfq{n+1}{n}{a_0\dots a_{n-1}}{1\dots b_{n-1}}{tz}dt
\end{equation} where \begin{equation}\label{normfactor}
    \mathcal{B}(HD)=B(r,s-r)\prod_{i=1}^{n-1}B(a_i,b_i-a_i). 
\end{equation}

The period normalized hypergeometric series is 
\begin{equation}\label{frs}
    P(HD,1):=\mathcal{B}(HD)\cdot F(HD(r,s),1).
\end{equation}
We see below that this provides the transcendental part of the periods of our hypergeometric de Rham structure.

   \begin{definition}\label{ar}
       A hypergeometric datum is referred to as \textit{arithmetic} if the values $a_i$ and $b_i$ are rational numbers and $n=m+1$. If $a_i-b_j\not\in\mathbb Z$ for all $i$ and $j$, we call the datum \textit{primitive}.
   \end{definition}

The imprimitive hypergeometric series reduce to hypergeometric series of shorter length. From a geometric point of view, the \textit{monodromy group} is reducible if and only if the datum is imprimitive. In the special case of length 2 data, the imprimitive monodromy groups are in fact abelian; refer to \cite{beukers}. 

\subsection{Hypergeometric Betti--de Rham structures}

We recall the formalism of Betti--de Rham structures, which is a variant of systems of realizations. See \cite{huber-realization} for more information about systems of realizations, and refer to the introduction of \cite{deligneraghuram} for an overview of Betti--de Rham structures. Assume $K$ is a number field with fixed algebraic closure $\bar{K}$ and embedding into $\C$. \begin{definition}
   A Betti--de Rham structure $M$ over $K$ is a system consisting of \begin{enumerate}
       \item A $K$-vector space $M_{dR}$ equipped with a finite decreasing filtration, the Hodge filtration,
       \item A $K$-vector space $M_B$ together with an involution induced from complex conjugation, and
       \item A comparison isomorphism $M_{dR}\otimes \C\cong M_B\otimes \C$ equivariant with regard to the Hodge filtration and complex conjugation.
   \end{enumerate} 
   The \textit{period matrix} of a Betti--de Rham structure is defined to be the matrix giving the isomorphism of $\C$-vector spaces in (3) with a fixed choice of basis. So long as a $K$-rational basis is chosen for $M_B$ and $M_{dR}$ initially, the period matrix is well-defined up to multiplication by an invertible matrix with algebraic entries.
\end{definition}

If $M$ is a motive over $K$, its Betti and de Rham realizations give rise to a Betti--de Rham structure over $K$. Therefore, if $K\supset\Q(\zeta_M)$, by Patrikis--Taylor \cite{patrikistaylot} there is a Betti--de Rham structure attached any primitive and arithmetic datum at fibers $t\in \mathbb A^1(K)\setminus\{0,1\}.$ This generalizes work of  Katz \cite[Thm 8.4.2]{katzrig}, which defined $M_{dR}\otimes_{\bar{\Q}} C$, for $C=\C$ or $\bar{\Q_{\ell}}$, geometrically.
At the degenerate fiber $t=1$, the monodromy of Katz's local systems is a quasi-unipotent pseudoreflection, and the above constructions do not apply \textit{a priori}. In Appendix II of \cite{rosen2}, we gave a heuristic for how this is done for $HD$ of length 3 by constructing a nonsingular projective surface $\bar{C}_{HD,t}$ for which $P(HD,t)$ is a period. We will avoid the formalism of motives and work in the categories of cohomology for the purposes of this paper; again, see Remark \ref{sysreal}. The construction is explained below. For the hypergeometric datum $HD=\{\{a_1,a_2,a_3\},\{1,b_1,b_2\}\}$, let $A_i=M(1-a_i)$ and $B_i=M(1+a_i-b_i).$ Then define the affine hypersurface \begin{equation}\label{eq:chdt}
    C_{HD,t}: Y^M=X_1^{A_2}X_3^{A_3}(1-X_1)^{B_1}(1-X_2)^{B_2}(1-tX_1X_2)^{A_1}.
\end{equation} There is an action of $\mu_M$, the group of $M$th roots of unity on $C_{HD,t}$, given by sending $Y$ to $\zeta_M^{-1}Y$. The surface $\bar{C}_{HD,t}$ is defined as a projective desingularization of $C_{HD}$ equivariant under the action of $\mu_M$; we know such a compactification exists by \cite{normalcrossing}, and the choice of resolution does not impact the construction described in \cite{rosen2}, refer to \cite{patrikistaylot} for further discussion. Let $PH^2_{\bullet}(\bar{C}_{HD,1},K)$ denote the primitive part of the 2nd cohomology, where $K$ is a number field and $\bullet$ is a stand-in for either Betti or de Rham cohomology. Assuming $\Q(\zeta_M)\supset K$, the action of the primitive $M$th roots of unity acts on both cohomologies, inducing a decomposition $$PH^2_{\bullet}(\bar{C}_{HD,1},K)\cong \bigoplus_{n=1}^N V_{n,t,\bullet}.$$ Note we fixed an isomorphism between roots of unity and elements of $\Z/M\Z$, thus fixing a generator. \begin{definition}\label{hdgdr}
    We define the hypergeometric Betti--de Rham structure, $\mathfrak{H}(HD,t)$, as the isotypical components $V_{1,t,dR}$ with the Hodge filtration induced from $PH^2_{dR}(\bar{C}_{HD,t},K)$ and the Betti cohomology space $V_{1,t,B}$, together with the natural isomorphism $V_{1,t,dR}\otimes_K \C\cong V_{1,t,B}\otimes_K \C$. Use $\mathfrak{H}(HD,1)\otimes \C$ to denote either of the isomorphic $\C$-vector spaces above.
\end{definition}  By an abuse of notation, we write $\mathfrak{H}^{p,q}(HD,1)$ for $(p,q)$th piece of the Hodge decomposition for $V_{1,t,dR}$.  We also define the hypergeometric Betti--de Rham structure over $\Q$ as the rank $2\varphi(M)$ vector space $$\mathfrak{H}(HD,1)_\Q:=\bigoplus_{n\in (\Z/M\Z)^\times} \mathfrak{H}(n\cdot HD,1),$$ where $n\cdot HD$ denotes multiplying each entry of $HD$ by $n$ and reducing modulo $\Z$. Since the subspace of $PH^2_{\bullet}(\bar{C}_{HD,1},K)$ is stabilized by $\Gal(\Q(\zeta_M)/\Q)$, $\mathfrak{H}(HD,1)_\Q$ can be defined with coefficients in any finite extension of $\Q$. As above, we will take $\mathfrak{H}^{p,q}(HD,1)$ for $(p,q)$th piece of the Hodge decomposition for the de Rham incarnation, viewed as a vector space over $\Q$. Assuming $M\mid 24$, in \cite{rosen2}, we see by an explicit calculation using differentials, that the holomorphic periods of $\mathfrak{H}(HD(r,s),1)$ are $P(HD(r,s),1)$ and $P(HD(r,s+1)$ if $\dim \mathfrak{H}^{2,0}(HD(r,s),1)=2$. Regardless, $P(HD,1)$ is always a holomorphic period of $\mathfrak{H}^{2,0}(HD(r,s),1)$, if any exist.

\section{Proof of Theorem \ref{mainconj}}\label{proofmainconj}

The main goal of this section is to prove Theorem \ref{thm:decomp}, which has Theorem \ref{mainconj} as a corollary. Before proving the theorem, we review some basic information about the Gross--Deligne conjecture. If the conjecture is true, the evaluation formulas we obtain are actually a precisely determined quotient of gamma values.

\subsection{Gross--Deligne Periods}

The Gross--Deligne conjecture was initially conceived by Gross \cite{grossror} with input from Deligne as a way to compute the periods of CM motives. Assume that $X$ is a smooth projective variety over $K$, and $F$ is a subfield of $K$ and also an abelian extension of $\Q$. A Betti--de Rham structure $H$ is said to have CM by $F$ if $F$ can be embedded into $\text{End}_{Hdg}(H)\otimes \Q$ and $\dim_\Q H=[F:\Q]$, where $\text{End}_{Hdg}(H)$ denotes the Hodge structure endomorphism group. Then $$H\otimes_{\Q}\C=\prod_{\sigma\in \text{Gal}(F/\Q)} H_\sigma.$$ Each subspace $H_\sigma$ is of rank 1, and so determines a unique period $P(H_\sigma)$ depending on $\sigma$, up to multiplication by an algebraic number. The Gross--Deligne conjecture says that $P(H_\sigma)$ has a precise form as a gamma quotient. Since $F$ is abelian, there exists an integer $d$ so that $F\subset \Q(\zeta_d)$, which we choose to be minimal. Let $\sim$ denote equality up to an algebraic number.

\begin{Conjecture}[The Gross--Deligne Conjecture]\label{grossdeligne-l-values}
   We have $$P(H_\sigma)\sim \prod_{a=1}^{d-1}\Gamma(1-1/d)^{\varepsilon(a/\sigma)},$$ where $\varepsilon(a/\sigma)$ is a (non-unique) function from $\Z/d\Z$ to $\Q$ determined by $F$ and $\sigma$. 
\end{Conjecture}
The precise definition of $\varepsilon(a/\sigma)$ is somewhat delicate in general. We refer the reader to the introduction of \cite{fresan} for an excellent exposition. In the special case of Hodge structure with CM by $F=\Q(\sqrt{-D})$, the Gross--Deligne period is called the Chowla--Selberg period is defined earlier \eqref{chowl}. In this case, $\varepsilon(a/\sigma)=\chi_D(a)h'(D)/2$, where $\chi_D$ is the quadratic character attached to $F$ and $h'(D)$ is a normalized class number. This indicates how the $\varepsilon$ function value encodes arithmetic information about $F$,

Restricted to out setting, we have $\dim_\Q \mathfrak{H}(HD(r,s),1)_\Q=2\varphi(M)$, but we also have that $\Q(\zeta_M)\subset \text{End}_{Hdg}(\mathfrak{H}(HD(r,s),1)_\Q)\otimes \Q$, via the construction of the spaces $\mathfrak{H}(HD(r,s),1)$. As a result, if the hypergeometric Betti--de Rham structure is CM, there are two possibilities: \begin{enumerate}
    \item There is a degree 2 extension of $\Q(\zeta_M)$, $K$, so that $K\subset \text{End}(H)\otimes \Q$; or
    \item The space $\mathfrak{H}(HD(r,s),1)_\Q$ can be decomposed into sub-Hodge structures, each having CM by a subfield of $\Q(\zeta_M)$.
\end{enumerate}
We provide examples of the second phenomenon. It would be interesting to find an example of the first case, especially if the field $K$ had non-abelian Galois group, though we currently have no evidence that such an example exists. We leave the exploration of this problem to future work.

\subsection{Evaluation Formulas}

  Consider a Hecke character $\chi$ on a field $F$. It is well-known that we can attach to $\chi$ a 1-dimensional $\ell$-adic representation of $\Gal(\bar{F}/F):=G_F$, see e.g. \cite{raghuram}, which we will refer to as $\chi_\ell$. Suppose there is a motive $M$ over $\Q$ that admits CM by $F$, i.e. the Betti--de Rham structure induced from the realization functors has CM. As in the case of Betti--de Rham structures, we can decompose the base change of $M$ to $F$, $M_F$, as $M_F\cong \prod M_\sigma$ via automorphisms $\sigma\in \text{Gal}(F/\Q)$. Suppose there is such a $\sigma$ so that the factor $M_\sigma$ has \'etale realization whose action of $G_F$ is isomorphic to $\chi_\ell$ for all $\ell$. Then we refer to $\chi$ as \textit{motivic}. Conjecturally, all algebraic Hecke characters are motivic \cite{raghuram}. Note the de Rham realization of $M_\sigma$ has a period defined as above as well.

\begin{remark}\label{sysreal}
    We are intentionally vague about the category of motives used in this paper, since throughout, when we refer to motives, it suffices for the theorem if their exists an system of realizations. We assume that all motives are pure throughout. Patrikis--Taylor \cite{patrikistaylot} showed hypergeometric motives exist in Andr\'e's category of motivated pure motives \cite{andre-constructing-motives}, so it is also generally adequate to assume that all motives lie in that category. However, there are some slight inconveniences; for instance, the Fermat motives in \cite{otsubo-regular-fermat} are defined in the category of Chow motives. Ultimately, for this paper the formalization of motives which we use is unimportant, as we are concerned with the transcendental invariants encoded by cohomology rather than the theory of cycles on varieties. See \cite{ayoub-motivic-sheaf} for further discussion of this dichotomy.
\end{remark}

 Katz \cite[Theorem 8.4.1]{katzrig} showed that the Galois representations $\rho_{HD,\ell,1}$ are motivic, i.e. there exists a variety $X$ so that $\rho_{HD,\ell,1}$ is induced from a factor of the $\ell$-adic cohomology $H^2_{et}(X_{\bar{\Q}},\overline{\Q_\ell})$. We denote this factor by $H^2_{et}(M_{HD},\overline{\Q_\ell})$, where $M_{HD}$ indicates this is the \'etale realization of the hypergeometric motive attached to $HD$.
To avoid technical discussions about hypergeometric motives at 1, we do need the following lemma to relate the Betti--de Rham structure $\mathfrak{H}(HD(r,s),1)$ described in Definition \ref{hdgdr} to $H^2_{et}(M_{HD},\bar{\Q_\ell})$.

\begin{Lemma}\label{comp}
    There is an isomorphism of $\C$-vector spaces $$\mathfrak{H}(HD,1)\otimes \C\cong H^2_{et}(M_{HD},\bar{\Q_\ell})\otimes_{\Z_\ell}\C.$$
\end{Lemma}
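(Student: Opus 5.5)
The plan is to pass through the comparison theorems for the smooth projective surface $\bar{C}_{HD,1}$, keeping track of the $\mu_M$-action throughout. Since $\bar{C}_{HD,1}$ is a $\mu_M$-equivariant projective desingularization of $C_{HD,1}$, it is smooth and projective over $K\supset\Q(\zeta_M)$. Grothendieck's algebraic de Rham comparison together with the de Rham theorem gives a $\mu_M$-equivariant isomorphism
\[
H^2_{dR}(\bar{C}_{HD,1}/K)\otimes_K\C\cong H^2_B(\bar{C}_{HD,1}(\C),\Q)\otimes_\Q\C .
\]
Artin's comparison theorem, after fixing an abstract field isomorphism $\overline{\Q_\ell}\cong\C$, gives
\[
H^2_B(\bar{C}_{HD,1}(\C),\Q)\otimes\overline{\Q_\ell}\cong H^2_{et}(\bar{C}_{HD,1,\bar{\Q}},\overline{\Q_\ell}).
\]
Both isomorphisms are functorial for automorphisms of the variety, so they commute with the action of $\mu_M$. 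They also respect cup product with the hyperplane class, hence preserve primitive parts. Taking the $\zeta_M$-isotypic component (the character $n=1$) on each side therefore identifies $V_{1,1,dR}\otimes\C$, $V_{1,1,B}\otimes\C$ and the $\zeta_M$-eigenspace of $PH^2_{et}(\bar{C}_{HD,1,\bar{\Q}},\overline{\Q_\ell})$, after extending scalars to $\C$. The first of these is $\mathfrak{H}(HD,1)\otimes\C$ by Definition \ref{hdgdr}.

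The second step identifies that étale eigenspace with $H^2_{et}(M_{HD},\overline{\Q_\ell})$, the factor through which Katz \cite[Theorem 8.4.1]{katzrig} realizes $\rho_{HD,\ell,1}$. Katz's construction, like Patrikis--Taylor's, uses the iterated Euler--Beta integral model $Y^M=\prod(\cdots)$ in \eqref{eq:chdt} with the same $\mu_M$-character. So I would argue that both are the $\chi$-isotypic part of the cohomology of a model of the same affine surface $C_{HD,1}$, cut down to the part that is pure of weight 2. Passing from the affine surface to its compactification $\bar{C}_{HD,1}$ only changes cohomology by classes supported on the boundary and on the exceptional divisors. These are algebraic cycle classes, i.e.\ Tate twists of Fermat-type pieces, and such pieces should either carry a different $\mu_M$-character or fail to be primitive. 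Since the isomorphism is only required over $\C$, it suffices to show that the two spaces have the same dimension, namely $2$, and are both the image of the same natural map from compactly supported cohomology of the open part. The rank count on the de Rham side was already carried out in \cite{rosen2}. On the étale side, Katz's local system at $t=1$ has rank $2$ for a length-3 primitive datum, because the monodromy at $1$ is a pseudoreflection and the invariants drop the rank by one.

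The main obstacle is exactly this middle step: excluding extra $\zeta_M$-isotypic classes in $PH^2(\bar{C}_{HD,1})$ that come from exceptional curves of the resolution or from boundary components. Such classes would make the de Rham side strictly larger than Katz's factor. My first attempt would be to note that every such class is algebraic, so it is of Hodge type $(1,1)$ and spanned by components of a $\mu_M$-stable divisor. The permutation action of $\mu_M$ on those components factors through a proper quotient whenever an orbit is nontrivial, so a primitive character cannot occur, except for components fixed by all of $\mu_M$, on which the generator acts trivially. Failing this, I would fall back on comparing dimensions via Hodge numbers computed from differentials as in \cite{rosen2}, against the Hodge--Tate weights of $\rho_{HD,\ell,1}$.
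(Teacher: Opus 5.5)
Your comparison step on $\bar{C}_{HD,1}$ is fine, but it only moves the problem. The lemma is an isomorphism of $\C$-vector spaces, so its whole content is that the $\chi$-isotypic part of $PH^2(\bar{C}_{HD,1})$ has dimension $2$ (equivalently, that it is Katz's factor). That step is not proved.

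Your argument for excluding boundary and exceptional classes is wrong as stated. $\mu_M$ acts on an orbit of components through $\mu_M/H$, where $H$ is the stabilizer. This is a proper quotient only when $H\neq 1$, not whenever the orbit is nontrivial. A free orbit of $M$ components spans an equivariant quotient of the regular representation $\C[\mu_M]$, and such a quotient can contain the primitive characters. Ruling this out needs a genuine geometric input, e.g.\ that along each boundary or exceptional component the cover is ramified or the residual cyclic cover does not split. You do not supply this.

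The escape route ``or fail to be primitive'' is empty. Take $\chi\neq 1$ and a $\mu_M$-invariant ample class. Cupping a class in the $\chi$-part lands in the $\chi$-part of the one-dimensional $H^4$, on which $\mu_M$ acts trivially, so it is zero. Hence the entire $\chi$-part of $H^2$ is automatically primitive.

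Your fallback does not close the gap either. Holomorphic differentials only compute $h^{2,0}$, so they cannot see extra algebraic classes of type $(1,1)$. Comparing with the Hodge--Tate weights of $\rho_{HD,\ell,1}$ presupposes the identification you are trying to prove.

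Quoting \cite{rosen2} for the rank count would make the rest of your argument superfluous. However, the paper presents that construction only as a heuristic, and the rank $2$ of $\mathfrak{H}(HD,1)$ is exactly what this lemma is meant to supply. Likewise, saying both spaces are ``the $\chi$-part of a model of the same affine surface'' does not connect the cohomology of the special fiber to Katz's factor. That factor is defined from the sheaf on $\mathbb{A}^1\setminus\{0,1\}$ and its degeneration at the pseudo-reflection, not from the special fiber.

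The missing idea, which is how the paper argues, is to work in the family $C_{HD,t}$ rather than on the single fiber at $t=1$. The Euler integral shows that $P(HD,t)$ is a local section of the $\chi$-part $\mathcal{K}_\chi$ of $\mathrm{Gr}^W R^2\pi_!\underline{\C}$. So $\mathcal{K}_\chi$ has hypergeometric local monodromy at $0$, and at $\infty$ via the analytic continuation formulas. Rigidity then identifies $\mathcal{K}_\chi$ with $\mathcal{L}_{HD}\cong\mathcal{H}^{can}(HD,\overline{\Q_\ell})\otimes\C$. The quasi-reflection at $t=1$ then cuts both sides down to the same rank-$2$ space.

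If you want to stay at $t=1$, you would have to compute the dimension directly. One way is to identify the $\chi$-part of the cohomology of the open part with twisted cohomology of the complement of the arrangement $X_1X_2(1-X_1)(1-X_2)(1-X_1X_2)=0$. That complement has Euler characteristic $2$, versus $3$ for generic $t$, because at $t=1$ the conic passes through the crossing point $(1,1)$ of $X_1=1$ and $X_2=1$. You would then still need a nonresonance/purity argument and a correct analysis of the boundary orbits.
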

\begin{remark}
    The first part of the argument is essentially due to Katz, but we provide a sketch of the proof for the readers convenience.
\end{remark}
\begin{proof}
  Katz \cite{katz,katzrig,katz-dwork-family} has shown that there is an irreducible lisse sheaf $\mathcal{H}^{can}(HD,\bar{\Q_\ell})$ defined over $\mathbb A^1(\Q(\zeta_M))\setminus\{0,1\}$ whose fibers are of rank 3, and whose Galois representation is $\rho_{HD,\ell,t}$ up to a twist on the base. The complexification $\mathcal{H}^{can}(HD,\bar{\Q_\ell})\otimes_{\bar{\Q}}\C$ is naturally isomorphic the local system associated to the hypergeometric differential equation, which we write as $\mathcal{L}_{HD}$. It is classical that 1 is a quasi-reflection for $\mathcal{L}_{HD}$, and so the fiber at 1, $(\mathcal{L}_{HD})_1$, is associated to a 2-dimensional complex vector space; see e.g. \cite{beukers} for a reference. Similarly, Katz's work shows that the singularity of $\mathcal{H}^{can}(HD,\bar{\Q_\ell})$ at 1 is also a quasi-reflection, and so $(\mathcal{H}^{can}(HD,\bar{\Q_\ell}))_1$ has a rank 2 subspace, which is precisely $H^2_{et}(M_{HD},\bar{\Q_\ell})$ by definition. It is thus clear that $H^2_{et}(M_{HD},\bar{\Q_\ell})\otimes_{\bar{\Q}}\C\cong(\mathcal{L}_{HD})_1$ as $\C$-vector spaces. 

  Likewise, $\mathfrak{H}(HD,1)\otimes \C$ is the singular fiber for a vector bundle. Write $C_{HD}$ for the variety $C_{HD,t}$ where $t$ is treated as a variable. Define the projection $\pi:C_{HD}\to \mathbb A^1$. Following the \cite[Theorem 8.4.1]{katzrig}, let $\underline{\C}$ denote the constant sheaf on $C_{HD}$. We then define the local system on $\mathbb A^1(\C)\setminus\{0,1\}$, $\mathcal{K}_\chi:=(\text{Gr}_r^WR^n\pi_!\underline{\C})^\chi$, where $\text{Gr}_n^W$ is the weight grading, $R^n$ is the $n$th right derived functor, $\pi_!$ is the direct image of $\pi$ with compact support, and $\chi$ is character from $\mu_M$ to $\C^\times$, chosen so that the isotypical component agrees with the choice in Definition \ref{hdgdr}. By definition, the fibers are the $\chi$th component of the weight $n$ part for the de Rham cohomology over $\C$, which is precisely $\mathfrak{H}(HD,t)\otimes \C$ by \cite{grothendieck}. On the other hand, we have constructed $\mathfrak{H}(HD,t)\otimes \C$, viewed as the complexified de Rham cohomology, so that $\frac{dX}{Y}$ is a differential in our space, c.f. the construction in \cite[Appendix II]{rosen2}. By the Euler integral formula (see e.g. \cite[Equation (2.2.4)]{aar}), this implies in the unit disc around 0 that $P(HD,t)$ is a period of each fiber, and therefore is a local section of $\mathcal{K}^\chi$. Therefore, the local monodromy around 0 is hypergeometric. A similar argument can be made at $\infty$ using the standard analytic continuation formulas given, for instance applying the Euler integral formula to, in \cite[Theorem 2.3.2]{aar}. Then by rigidity \cite{katzrig} (see also \cite{beukers}), the sheaf is isomorphic to $\mathfrak{L}_{HD}$. Combined with the above argument, the claim follows.
\end{proof}

Given an $\ell$-adic Galois representation $\rho_\ell$, we will let $\rho_\ell^{ss}$ denote its semi-simplification. We will also view $\mathfrak{H}(HD,1)\otimes \C$ as the complexified de Rham cohomology, and will write $$H^2_{dR}(M_{HD},\C):=\mathfrak{H}(HD,1)\otimes \C$$ for clarity of presentation in the following proof.

\begin{theorem}\label{thm:decomp}
   Let $HD$ be an irregular length three datum so that the Hodge numbers of $\mathfrak{H}(HD,1)$ are $(2,0,0)$. Assume that for some $\ell$, there are motivic Hecke characters $\chi_{\ell,1}$ and $\chi_{\ell,2}$ so that $$\rho_{HD,\ell,1}^{ss}\cong \chi_{\ell,1}\oplus \chi_{\ell,2}$$ is an isomorphism of $G_{\Q(\zeta_M)}$ representations. Let $\mathcal{G}_1$ and $\mathcal{G}_2$ denote the periods for the motives attached to $\chi_{\ell,1}$ and $\chi_{\ell,2}$. Then there exist algebraic numbers $\gamma_1$ and $\gamma_2$ so that $$P(HD,1)=\gamma_1\mathcal{G}_1+\gamma_2\mathcal{G}_2.$$ Assuming the Gross--Deligne conjecture, $\mathcal{G}_1$ and $\mathcal{G}_2$ are some quotients of gamma values specified in Conjecture \ref{grossdeligne-l-values}.
\end{theorem}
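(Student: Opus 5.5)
The plan is to transport the splitting of the Galois representation to a splitting of the complexified de Rham realization, and then read off the holomorphic period. The decomposition of $\rho^{ss}_{HD,\ell,1}$ as $G_{\Q(\zeta_M)}$-representations gives, by Lemma \ref{comp}, a two-dimensional complex space
\[
H^2_{dR}(M_{HD},\C)\cong H^2_{et}(M_{HD},\overline{\Q_\ell})\otimes\C.
\]
On this space we want two lines $L_1,L_2$ that correspond to the motives $M_{\chi_1},M_{\chi_2}$ attached to the motivic Hecke characters. Each $L_i$ should carry a $\Q(\zeta_M)$-rational (or $\bar{\Q}$-rational) de Rham class $\omega_i$ and a Betti class, with period $\int\omega_i\sim\mathcal{G}_i$ up to an algebraic factor.

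\textbf{Realizing the splitting motivically.} Since $\chi_{\ell,i}$ is motivic, there are motives $M_{\chi_i}$ whose $\ell$-adic realizations are $\chi_{\ell,i}$. The step I expect to be the main obstacle is upgrading the isomorphism of semisimplified Galois representations to an isomorphism of Betti--de Rham structures
\[
\mathfrak{H}(HD,1)\otimes\bar{\Q}\cong (M_{\chi_1})_{B,dR}\oplus (M_{\chi_2})_{B,dR}.
\]
I would argue this through the CM theory of the previous subsection. The irregularity hypothesis says the Hodge numbers are $(2,0,0)$, so all of $\mathfrak{H}(HD,1)$ lies in $F^2$. Reducibility into characters then makes each summand a rank one Hodge structure, hence CM (case (2) of the dichotomy above).

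To identify these rank one structures with the $M_{\chi_i}$, I would try two routes:
\begin{enumerate}
\item invoke the Tate/Hodge-type comparison that is known for CM (potentially abelian) motives, by Deligne's theorem that Hodge cycles on abelian varieties are absolutely Hodge, applied inside Andr\'e's category of motivated motives where Patrikis--Taylor place $M_{HD}$;
\item failing that, note that the isotypic decomposition under $\mu_M$ and the projectors cutting out $\chi_i$ are algebraic, so the $\chi_i$-eigenspaces in de Rham cohomology are defined over $\bar{\Q}$.
\end{enumerate}
This is where the semisimplicity issue enters. Hodge structures of motives are semisimple (polarizable), so the splitting on the Hodge side holds even though $\rho$ is only known up to semisimplification.

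\textbf{Reading off the period.} With the decomposition in hand, choose $\bar{\Q}$-rational bases adapted to it: de Rham classes $\omega_1,\omega_2$ and Betti cycles $\delta_1,\delta_2$. In these bases the period matrix is diagonal, $\mathrm{diag}(\mathcal{G}_1,\mathcal{G}_2)$ up to algebraic scalars. The differential $dX_1\wedge dX_2/Y$ from the construction of $\bar C_{HD,1}$ is a $K$-rational class in $\mathfrak{H}(HD,1)_{dR}$, so $\omega=c_1\omega_1+c_2\omega_2$ with $c_i$ algebraic. The cycle $\delta$ realizing the Euler integral, which gives $P(HD,1)$ via \eqref{hypint} with integration over $[0,1]^2$, is a rational Betti class, so $\delta=d_1\delta_1+d_2\delta_2$ with $d_i$ algebraic. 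Pairing then gives
\[
P(HD,1)=\int_\delta\omega=c_1d_1\mathcal{G}_1+c_2d_2\mathcal{G}_2,
\]
which is the claimed identity with $\gamma_i=c_id_i$.

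\textbf{Gross--Deligne refinement.} The final sentence follows by applying Conjecture \ref{grossdeligne-l-values} to each rank one CM summand $M_{\chi_i}$, which has CM by an abelian field contained in $\Q(\zeta_M)$. The conjecture identifies each $\mathcal{G}_i$ with the stated gamma quotient, up to algebraic factors absorbed into $\gamma_i$.
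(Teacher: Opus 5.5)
There is a genuine gap, and it sits exactly at the step you flagged. Nothing in the proposal actually produces a splitting of $\mathfrak{H}(HD,1)$ into rank-one Betti--de Rham structures that is $\bar{\Q}$-rational on both the Betti and de Rham sides and matches $M_{\chi_1},M_{\chi_2}$. Without it, your pairing computation has nothing to act on, and neither route supplies it.

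\emph{Route (2).} It fails outright. The $\mu_M$-projectors only cut out the whole rank-two component $\mathfrak{H}(HD,1)$, and $\chi_1,\chi_2$ both live inside that single isotypic piece, so no algebraic projector you have named separates them. A projector onto a $\chi_i$-part exists only $\ell$-adically, and only after semi-simplification, since $\mathcal{W}$ need not have a Galois-stable complement. Claiming it comes from an algebraic correspondence amounts to the Tate and semisimplicity conjectures for $\bar{C}_{HD,1}\times\bar{C}_{HD,1}$.

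\emph{Route (1).} It presupposes that this piece of $H^2(\bar{C}_{HD,1})$ is of abelian or CM type, so that Deligne's absolute Hodge theorem or Andr\'e's theorem applies. That is not known and is essentially what is to be proved. Mere membership in Andr\'e's category gives no Tate- or Hodge-type statement.

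\emph{The Hodge-theoretic claims.} ``Reducibility into characters makes each summand a rank one Hodge structure'' is a Mumford--Tate-type transfer from $\ell$-adic to Hodge data that is not available. $\mathcal{W}$ is a line in $H_B\otimes\bar{\Q}_\ell$ with no reason to be defined over $\bar{\Q}$. Since $\mathfrak{H}(HD,1)$ lies entirely in $F^2$, Hodge theory singles out no line at all. The conjugate pieces need not be pure either: e.g.\ $3\cdot HD(1/5,3/10)=HD(3/5,9/10)$ has type $(1,0,1)$ at $t=1$ by Lemma~\ref{galois}. So CM-ness of $\mathfrak{H}(HD,1)_\Q$ is not automatic. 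Semisimplicity of polarizable Hodge structures is irrelevant until you have a sub-Hodge structure corresponding to $\mathcal{W}$.

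\emph{Comparison with the paper.} The paper does not supply this input either. It complexifies via Lemma~\ref{comp}, picks a complement to $\mathcal{W}_\C$ to get the $\C$-linear isomorphism \eqref{iso}, and then asserts ``by functoriality'' that the two period matrices differ by factors in $GL_2(\bar{\Q})$. But \eqref{iso} is an abstract isomorphism of two-dimensional complex spaces, not one induced by a morphism of realizations; as the paper itself notes, the Galois structure is discarded on complexifying. So it carries no $\bar{\Q}$-rationality. Nothing in that step uses the hypothesis, so it would equally ``prove'' the conclusion for any two rank-two structures.

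Your diagnosis of the crux is therefore correct, and your downstream steps are fine. Suppose you had a $\bar{\Q}$-rational decomposition, for instance an isomorphism of motivated motives $M_{HD}\cong M_1\oplus M_2$ (which the paper remarks would make \eqref{iso} immediate). Then the algebraic coordinates $c_i,d_i$ of the Euler form and cycle, together with the diagonal period matrix, give $P(HD,1)=\gamma_1\mathcal{G}_1+\gamma_2\mathcal{G}_2$, and the Gross--Deligne refinement follows. As it stands, though, the statement needs such an isomorphism, or a Tate/Mumford--Tate input for $\bar{C}_{HD,1}$, as an extra hypothesis. The Galois hypothesis alone does not reach the periods by either argument.
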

Implicitly, $\chi_{\ell,i}$ are $G_{\Q(\zeta_M)}$ representations. However, note that their CM type may not be $\Q(\zeta_M)$, and so the Gross-Deligne periods may be associated to a subfield of $\Q(\zeta_M)$.
\begin{proof}
   The Galois representation $\rho_{HD,\ell,1}$ is naturally associated to its Galois module, $H^2_{et}(M_{HD},\bar{\Q_\ell})$. By assumption $\chi_{\ell,1}$ and $\chi_{\ell,2}$ have cohomology groups, $H^2_{et}(M_i,\overline{\Q_\ell})$, which fit into a system of realizations. By default, the Galois representations attached to $H^2_{et}(M_i,\bar{\Q_\ell})$ are semi-simple, as they are 1-dimensional. The isomorphism of Galois representations up to semi-simplification indicates that $H^2_{et}(M_{HD},\bar{\Q_\ell})$ is reducible, i.e. there is a non-trivial subrepresentation $\mathcal{W}$ of $H^2_{et}(M_{HD},\bar{\Q_\ell})$, and as $H^2_{et}(M_{HD},\bar{\Q_\ell})$ is degree 2, $\mathcal{W}$ must be degree 1. As a result, there is a filtration of the form $H^2_{et}(M_{HD},\bar{\Q_\ell})\supset \mathcal{W}\supset 0$, which by the Jordan--H\"older theorem is unique up to isomorphism. By definition, the semi-simplification of $\rho_{HD,\ell,1}$ is \begin{equation}\label{ss}
        \rho_{HD,\ell,1}^{ss}\cong H^2_{et}(M_{HD},\bar{\Q_\ell})/\mathcal{W}\oplus \mathcal{W}\cong H^2_{et}(M_1,\bar{\Q_\ell})\oplus H^2_{et}(M_2,\bar{\Q_\ell}),
    \end{equation} with the second isomorphism by hypothesis. However, when complexifying using Lemma \ref{comp} on the left and the motivic realizations on the right, the structure of Galois modules is lost, and so we simply obtain a vector space $H^2_{et}(M_{HD},\bar{\Q_\ell})\otimes_{\Z_\ell}\C\cong H^2_{dR}(M_{HD},\C)$, which is a 2-dimensional $\C$-vector space with 1-dimensional subspace $\mathcal{W}_\C:=\mathcal{W}\otimes_{\Z_{\ell}}\C$. Vector spaces are then ``semi-simple'', i.e. given a subspace like $\mathcal{W}_\C$, there is a complement $\mathcal{W}_\C'$ so that \begin{equation}\label{complexify}
        H^2_{dR}(M_{HD},\C)\cong \mathcal{W}_\C\oplus \mathcal{W}_\C'.
    \end{equation} On the other hand, complexifying \eqref{ss}, we get that \begin{equation}\label{sscomp}
        H^2_{dR}(M_{HD},\C)/\mathcal{W}_\C\oplus W_\C\cong H_{dR}^2(M_1,\C)\oplus H^2_{dR}(M_2,\C).
    \end{equation} Combining \eqref{sscomp} with \eqref{complexify},  we deduce that  \begin{equation}\label{iso}
        H^2_{dR}(M_{HD},\C)\cong H^2_{dR}(M_1,\C)\oplus H^2_{dR}(M_2,\C).
    \end{equation}
    
    By \cite{grothendieck}, $H^2_{dR}(M_{HD},\C)\cong H_{dR}(M_{HD},\bar{\Q})\otimes \C$. As in Kontesvich--Zagier \cite{kontsevichzagier}, one can define the algebraic period matrix as the integrals of differentials in $H_{dR}(M_{HD},\bar{\Q})$ over integral homology classes, and this period matrix is unique up to multiplication by invertible matrices with algebraic entries. The construction of the period matrix is analogous for the algebraic de Rham incarnation of $H^2_{dR}(M_i,\C)$, denoted by $H^2_{dR}(M_i,\bar{\Q})$.
 We now compute the period matrices in two different ways. Via the Euler integral formula, see e.g. \cite{fedorov,kellyvoight}, and because the Hodge numbers of $H^2_{dR}(M_{HD},\bar{\Q})$ are $(2,0,0)$, there is a choice of basis for $H^2_{dR}(M_{HD},\bar{\Q})$ so that the first entry of the period matrix is $P(HD,1)$ up to an algebraic multiple. We denote this matrix by $\text{Per}(M_{HD})$. The period matrix of $H^2_{dR}(M_i, \bar{\Q})$ is a number, whose transcendental part we denote by $\mathcal{G}_i$. As a result, the period matrix of $H^2_{dR}(M_1,\bar{\Q})\oplus H^2_{dR}(M_2,\bar{\Q})$ is $$\text{Per}(M_1\oplus M_2)=\begin{pmatrix}
        \mathcal{G}_1&0\\0&\mathcal{G}_2
    \end{pmatrix},$$ up to multiplication by an invertible diagonal matrix with algebraic entries. 
On other hand, the period matrix is also defined see e.g. \cite{deligneraghuram,otsubo-regular-fermat} as the matrix giving an isomorphism between the groups $H^2_{dR}(X,\bar{\Q})\otimes \C$ and $H^2_{B}(X,\bar{\Q})\otimes \C$. 
    Using \eqref{iso}, we obtain the chain of isomorphisms 
    $$\begin{tikzcd}
H^2_{dR}(M_{HD},\C) \arrow[r,leftrightarrow] \arrow[d,leftrightarrow]
& H^2_{dR}(M_1,\C)\oplus H^2_{dR}(M_2,\C) \arrow[d,leftrightarrow] \\
H^2_{dR}(M_{HD},\bar{\Q})\otimes \C  \arrow[d,leftrightarrow]
& (H^2_{dR}(M_1,\bar{\Q})\oplus H^2_{dR}(M_2,\bar{\Q}))\otimes \C\arrow[d,leftrightarrow]\\ H^2_{B}(M_{HD},{\Q})\otimes \C  
& (H^2_{B}(M_1,\Q)\oplus H^2_{B}(M_2,\Q))\otimes \C
\end{tikzcd}$$
It is known, see e.g. \cite{deligne-l-values}, that if the same $\bar{\Q}$-rational choice of basis is made, the period matrices constructed via integration or as described above coincide.
As we have chosen a $\bar{\Q}$-rational basis when constructing each of the matrices $\text{Per}(M_{HD})$ and $\text{Per}(M_1\oplus M_2)$ and the isomorphisms above are functorial, the two period matrices are the same up to a choice of $\bar{\Q}$-rational basis, i.e. there exist  $A,B\in GL_2(\bar{\Q})$ so that $$\text{Per}(M_{HD})=A\text{Per}(M_1\oplus M_2)B.$$ By construction, the first entry of $\text{Per}(M_{HD})$ is $P(HD,1)$ up to an algebraic multiple. Likewise, by the definition of $\text{Per}(M_1\oplus M_2)$, it is clear that after multiplying by matrices in $GL_2(\bar{\Q})$, the first entry will be a $\bar{\Q}$-linear combination of $\mathcal{G}_1$ and $\mathcal{G}_2$.  
\end{proof}

   The first paragraph can be simplified if one believes the Tate conjecture or the related semi-simplicity conjecture, see e.g. \cite{moonen}. In particular, these conjectures imply the representation $\rho_{HD,\ell,1}$ should be semi-simple already. Likewise, if we knew an isomorphism of motivated motives, $M_{HD}\cong M_1\oplus M_2$, \eqref{iso} would be immediate. However, in practice we find our assumptions to be most convenient for the applications we have in mind, specifically Theorem \ref{mainconj}.

\subsection{The Well-Poised Cases}
Finally, we will prove Theorem \ref{mainconj}. The key tool is known evaluation formulas for finite field hypergeometric functions, first provided by Greene \cite{greene} and McCarthy \cite{mccart}. To prove the precise version needed for this paper, we provide a rapid review of finite field hypergeometric functions.  Assume $r$ is a rational number and $M$ is its denominator. For $\zeta_M$ a primitive fixed $M$th root of unity, let $\mathfrak{p}$ be a prime ideal of $\Z[\zeta_M]$ and $k_\mathfrak{p}$ its residue field, which we assume is of size $q$. Then \cite{weil} there is a unique character on $k_\mathfrak{p}$
$$\iota_{\mathfrak{p},i/M}(x):=\left(\frac{x}{\mathfrak{p}}\right)_M^i\equiv x^{\frac{i(q-1)}{M}}\mod \mathfrak{p}.$$ Using this, define the \textit{Gauss sum} is $g_{\mathfrak{p}}(r):=\sum_{a\in k_\mathfrak{p}^\times}\iota_{\mathfrak{p},r}(a)\cdot \zeta_q^{\text{Tr(a)}},$ where $\text{Tr}(a)$ denotes the field trace from $k_\mathfrak{p}$ to its prime field. Gauss sums satisfy analogues to the reflection formula and multiplication formula at the start of Section \ref{spcfun}; see e.g. \cite{flrst} for precise statements. The definition of the \textit{Jacobi sum} can be taken as \begin{equation}\label{gauss}
    J_{\mathfrak{p}}(r,s)=\frac{g_{\mathfrak{p}}(r)g_{\mathfrak{p}}(s)}{g_{\mathfrak{p}}(r+s)},
\end{equation} 
for our purposes, in analogy with \eqref{betagamma} relating the beta and gamma function. Weil \cite{weil} showed that Jacobi sums are Gr\"ossencharaketere in the sense of Hecke, and thus correspond to algebraic Hecke characters. Likewise, the trace of the Katz representation is a character sum, known as a finite field hypergeometric function. We use the conventions of \cite{flrst} in defining them; see loc. cit. for other versions of finite field hypergeometric sums. Using Greene's \cite{greene}  finite field binomial coefficient for characters $A$ and $B$ on $\mathbb F_q$, given by $$\binom{A}{B}=-B(-1) \sum_{x=0}^{q-1}A(x)\bar{B}(1-x),$$ we define the finite field period function, \begin{equation}\label{period}
    \mathbb{P}(HD,\lambda,\mathfrak{p}):=\frac{(-1)^n}{q-1}\prod_{i=1}^n \iota_{\mathfrak{p},a_i}\iota_{\mathfrak{p},b_i}(-1)\sum_{\chi\in \widehat{k_\mathfrak{p}^\times}}\binom{\iota_{\mathfrak{p},a_0}\chi}{\chi}\binom{\iota_{\mathfrak{p},a_1}\chi}{\iota_{\mathfrak{p},b_1}\chi}...\binom{\iota_{\mathfrak{p},a_n}\chi}{\iota_{\mathfrak{p},b_{n}}\chi}.
\end{equation}
We normalize $\rho_{HD,\ell,t}$ so that $\text{tr}(\rho_{HD,z,\ell}(\Frob_\p))=(-1)^{n-1}\mathbb{P}(HD,z,\p)$; note this only requires taking a twist of the base, and so does not impact any of the arguments above. For Theorem \ref{mainconj}, we require the following lemma. This amounts to computing the Hecke character for $\chi_{\ell,1}$ and $\chi_{\ell,2}$ in Theorem \ref{thm:decomp} using Jacobi sums for these cases. This computation enables us to show that Hecke characters are motivic, and so we can apply Theorem \ref{thm:decomp}.

\begin{Lemma}\label{deco} We have 
  For $\p$ be a prime ideal of $\Q(\zeta_M)$ as above, let $q$ be the size of the residue field at $\p$, and let $c_r(\mathfrak{p})=\iota_{\mathfrak{p},r+1/4}(1/4)\cdot\iota_{\mathfrak{p},r+1/2}(-1)$. Then,   $$\mathbb{P}(HD(r,3/2-r),1,\mathfrak{p})=\begin{cases}
      c_r(\mathfrak{p}) \cdot J_\p(1/4,r)^2+c_r(\mathfrak{p})\cdot J_\p(3/4,r)^2 & 4\mid q-1\\0&4\nmid q-1
  \end{cases}$$  
\end{Lemma}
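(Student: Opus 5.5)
The plan is to prove Lemma \ref{deco} as the finite field analogue of Dixon's formula \eqref{dixon}. The datum $HD(r,3/2-r)=\{\{1/2,1/2,r\},\{1,1,3/2-r\}\}$ is well-poised, and I would reduce its finite field period to a Greene--McCarthy type evaluation of a well-poised ${}_3F_2(1)$ character sum. Concretely, I would use the convention of \cite{flrst} to expand $\mathbb{P}(HD,1,\p)$ in \eqref{period} as a sum over $\chi\in\widehat{k_\p^\times}$ of products of Greene binomial coefficients. I would then rewrite each binomial in terms of Jacobi sums, hence Gauss sums via \eqref{gauss}, obtaining
\[
\mathbb{P}(HD,1,\p)=\frac{\pm 1}{q-1}\sum_{\chi} \frac{g(\iota_{1/2}\chi)^2 g(\iota_r\chi)\, g(\overline{\chi})^2\, g(\iota_{3/2-r}\overline{\chi})}{g(\iota_{1/2})^2 g(\iota_r)g(\iota_{3/2-r}-\iota_r)\cdots},
\]
with the normalizing signs $\iota(-1)$ tracked carefully.

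\textbf{Step 1 (the case $4\nmid q-1$).} If $4\nmid q-1$, there is no character of order $4$ on $k_\p^\times$. I would apply the substitution $\chi\mapsto \iota_{1/2}\iota_{-r}\overline{\chi}$, together with the reflection formula $g(A)g(\overline A)=A(-1)q$ for Gauss sums. This maps the well-poised sum to itself multiplied by $\iota_{1/2}(-1)=(-1)^{(q-1)/2}=-1$, so the sum vanishes. This is the finite field shadow of the fact that Dixon's formula contains $\Gamma(5/4)/\Gamma(3/4)$.

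\textbf{Step 2 (the case $4\mid q-1$).} Here I would invoke the finite field Dixon theorem of Greene \cite[Thm.~4.37-type]{greene} and McCarthy \cite{mccart}. It states that a well-poised ${}_3F_2(A,B,C;\,A\overline B,A\overline C\mid 1)$ equals a sum over the two square roots $D$ with $D^2=A$, each term being a product of Jacobi sums. With $A=\iota_{1/2}$, the two roots are $\iota_{1/4}$ and $\iota_{3/4}$. Specializing $B=\iota_{1/2}$ and $C=\iota_r$, each term becomes a product of binomials of the form $\binom{D\iota_r}{\cdot}$. I would then use the Gauss sum multiplication/duplication formula (the analogue of multiplying by $2$) to collapse each term to $J_\p(D,r)^2$ times a character value. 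The character values produced by the duplication formula are $\iota_{r+1/4}(1/4)$, from the $2^{-2x}$-type factor, and $\iota_{r+1/2}(-1)$, from the reflection signs. I expect these to combine into the single constant $c_r(\p)$ for both terms, since $\iota_{r+3/4}(1/4)=\iota_{r+1/4}(1/4)\,\iota_{1/2}(1/4)$ and $\iota_{1/2}(1/4)=1$.

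\textbf{Main obstacle.} I expect the hardest part to be the bookkeeping of normalizations. The conventions of \cite{greene} and \cite{mccart} differ from \cite{flrst}, and matching the factors $(q-1)^{-1}$, the $\iota(-1)$ prefactors in \eqref{period}, and the Gauss sum sign $g(\iota_{1/2})^2=\iota_{1/2}(-1)q$ is where errors arise. If quoting Greene's theorem proves too awkward, my fallback is to prove Step 2 directly. I would write the $\chi$-sum of Gauss sums, insert the duplication formula for $g(\iota_{1/2}\chi)g(\chi)$ in terms of $g(\chi^2)$, and split the sum according to $\chi^2$. Orthogonality then reduces the sum to the two classes of $\chi$ with $\chi^2\iota_{-1/2}$ trivial-adjacent, namely those indexed by $\iota_{1/4}$ and $\iota_{3/4}$, and each yields one Jacobi-square term.
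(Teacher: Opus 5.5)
\textbf{Step 2 matches the paper; Step 1 fails as written.} Your Step 2 is essentially the paper's proof. The paper likewise quotes the Greene--McCarthy evaluation of this well-poised sum, whose two terms are indexed by the square roots $\iota_{\p,1/4},\iota_{\p,3/4}$ of $\iota_{\p,1/2}$, and rewrites it in Gauss sums. It then collapses the first term using the duplication formula $\iota_{\p,r+1/4}(1/4)\,g_\p(1/2+2r)g_\p(1/2)=g_\p(1/4+r)g_\p(3/4+r)$, the reflection formula, and $g_\p(1/2)^2=\iota_{\p,1/2}(-1)q$. Your remark $\iota_{\p,1/2}(1/4)=1$ is exactly why the second term carries the same constant $c_r(\p)$. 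The difference is that the paper takes the vanishing for $4\nmid q-1$ from the same theorem (the case where $A=\iota_{\p,1/2}$ is not a square), whereas you prove it separately.

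\textbf{Where Step 1 goes wrong.} Write $J(X,Y)=\sum_x X(x)Y(1-x)$. The factor coming from $\binom{\iota_{\p,r}\chi}{\iota_{\p,3/2-r}\chi}$ is $-\iota_{\p,3/2-r}\chi(-1)\,J(\iota_{\p,r}\chi,\iota_{\p,r+1/2}\bar\chi)$. So your expansion should contain $g_\p$ of $\iota_{\p,r+1/2}\bar\chi$, not of $\iota_{\p,3/2-r}\bar\chi$.

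Now apply your substitution $\chi\mapsto\iota_{\p,1/2}\iota_{\p,-r}\bar\chi$. This factor becomes $J(\iota_{\p,1/2}\bar\chi,\iota_{\p,2r}\chi)$, and $J(\iota_{\p,1/2}\chi,\bar\chi)$ becomes $J(\iota_{\p,-r}\bar\chi,\iota_{\p,r+1/2}\chi)$. Characters $\iota_{\p,2r}\chi$ appear, and the summand is not mapped to $\pm$ itself, reflection formula or not.

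\textbf{The fix.} The correct well-poised involution is $\chi\mapsto\iota_{\p,1/2}\bar\chi$, i.e.\ $\chi\mapsto\bar A\bar\chi$ with $A=\iota_{\p,1/2}$. Because $J(X,Y)=J(Y,X)$, it fixes $J(\iota_{\p,1/2}\chi,\bar\chi)$ and $J(\iota_{\p,r}\chi,\iota_{\p,r+1/2}\bar\chi)$ exactly. This holds for degenerate $\chi$ too, and needs no reflection formula. Meanwhile the three prefactors $\chi(-1)$ pick up $\iota_{\p,1/2}(-1)^3=(-1)^{(q-1)/2}$, which is $-1$ precisely when $4\nmid q-1$. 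With this correction, or by simply citing the vanishing case of Greene's theorem as the paper does, your argument goes through.

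\textbf{On your fallback.} Your fallback for Step 2 (splitting by $\chi^2$ and using orthogonality) does not by itself evaluate the well-poised sum. The appeal to Greene--McCarthy is genuinely needed there.
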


\begin{proof}
Set $$
\mathcal{G}_{r,\p}(i):=\frac{g_\p(1/2)g_\p(1/2-i/4)g_\p(r-i/4)g_\p(r)}{g_\p(1-i/4)g_\p(1)g_\p(r+1/2)g_\p(r+1/2-i/4)}.$$ It was proved in \cite{greene,mccart} that $$\mathbb{P}(HD(r,1/2-r),1,\mathfrak{p})=\begin{cases}  \displaystyle J_{\mathfrak{p}}(r,1/2-2r)\cdot [\mathcal{G}_{r,\p}(1)+\mathcal{G}_{r,\p}(2)] & \text{ if }4\mid q-1\\0& \text{ if }4\not\mid q-1. \end{cases}$$ Rewriting the Jacobi sums as Gauss sums, the first term becomes $$\frac{g_\p(1/2)g_\p(1/4)g_\p(3/4+r)g_\p(r)^2g_\p(3/2-2r)}{g_\p(3/4)g_\p(1/4+r)}.$$ We multiply top and bottom by $g_\p(1/4+r)g_\p(1/4)$ to get $$\iota_{\mathfrak{p},1/4+r}(1/4)\cdot \iota_{\mathfrak{p},1/4}(1/4)\cdot \frac{g_\p(1/2)g_\p(1/4)^2g_\p(1/2+2r)g_\p(r)^2g_\p(1/2-2r)}{g_\p(1/4)g_\p(3/4)g_\p(1/4+r)^2}$$
Simplifying using the reflection formula in denominator and the duplication formula $\iota_{\mathfrak{p},r+1/4}(1/4)\cdot g_\p(1/2+2r)g_\p(1/2)=g_\p(1/4+r)g_\p(3/4+r)$ in the numerator, we obtain
$$\iota_{\mathfrak{p},r} (-1)\cdot\iota_{\mathfrak{p},1/2+r} (-1)\cdot\iota_{\mathfrak{p},r+1/4}(1/4)\cdot \frac{g_\p(1/2)^2g_\p(1/4)^2g_\p(r)^2}{q\cdot g_\p(1/4+r)^2}.$$ Using that $\iota_{\mathfrak{p},1/2}(-1)\cdot q=g_\p(1/2)^2$, we cancel the $q$ and the three signs, producing the claimed character times a Jacobi sum. The process is identical for the second part of the sum.
\end{proof}

 We now prove Theorem \ref{mainconj}. Let us briefly explain the condition $m>4$ in that theorem. From Lemma \ref{galois}, when $r=1/m$ and $m>4$, the Hodge numbers of $\mathfrak{H}(HD(r,1/2-r),1)$ will be $(2,0,0)$, as is given in Theorem \ref{mainconj}. If $m<4$, the series ${}_3F_2(HD(r,1/2-r),1)$ doesn't even converge. Geometrically, this is explained as follows. The cases $m=2,4$ are imprimitive. This means that the Hodge numbers become $(1,0,0)$ and so there is exactly one holomorphic period, given by $B(1/4,1/m)$. It turns out that $B(3/4,1/m)$ is the non-holomorphic period, i.e. the period of the complex conjugate to our unique holomorphic differential. The case that $m=3$ has Hodge numbers $(1,0,1)$, and the situation is similar to above. 
Actually, the hypothesis is better stated using using Hodge numbers.

\begin{theorem}\label{mainconj2}
    Assume that $\mathfrak{H}(HD(r,1/2-r),1)$ is primitive and has Hodge numbers $(2,0,0)$. Then there exist constants $\gamma_1$ and $\gamma_2$ so that $$\pi B(1/2-2r,1/2-r)\pfq{3}{2}{\frac{1}{2}&\frac{1}{2}&r}{1&1&\frac{1}{2}-r}{1}=\alpha_1\cdot B\left(1/4,r\right)^2+\alpha_2\cdot B\left(3/4,r\right)^2.$$
\end{theorem}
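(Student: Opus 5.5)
The plan is to deduce Theorem \ref{mainconj2} from Theorem \ref{thm:decomp}, taking $HD=HD(r,1/2-r)$, whose Hodge numbers are $(2,0,0)$ by hypothesis. Two ingredients are needed: the semisimplified Galois representation must split as a sum of two motivic Hecke characters, and the periods $\mathcal{G}_1,\mathcal{G}_2$ of those characters must be identified with $B(1/4,r)^2$ and $B(3/4,r)^2$ up to algebraic multiples. Once both are in hand, Theorem \ref{thm:decomp} gives $P(HD,1)=\gamma_1\mathcal{G}_1+\gamma_2\mathcal{G}_2$. It then remains to check that $P(HD,1)=\mathcal{B}(HD)\,{}_3F_2(HD,1)$ agrees with $\pi B(1/2-2r,1/2-r)\,{}_3F_2$ up to an algebraic factor. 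By \eqref{normfactor}, $\mathcal{B}(HD)=B(1/2,1/2)\,B(r,1/2-2r)$, which is $\pi$ times a beta value; the argument of the second beta should be matched using $B(a,b)=B(b,a)$ and the normalization convention of \eqref{frs}.

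\textbf{Step 1: Galois splitting.} Restrict to $G_{\Q(\zeta_{4M})}$, so that every prime $\p$ has $4\mid q-1$. By the normalization $\tr\rho_{HD,\ell,1}(\Frob_\p)=\pm\mathbb{P}(HD,1,\p)$, Lemma \ref{deco} shows the Frobenius trace equals $\pm c_r(\p)\bigl(J_\p(1/4,r)^2+J_\p(3/4,r)^2\bigr)$. Here $c_r(\p)$ is a finite-order character value, so it can be absorbed as a twist.

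\begin{itemize}
\item By Weil, $\p\mapsto c_r(\p)J_\p(1/4,r)^2$ and $\p\mapsto c_r(\p)J_\p(3/4,r)^2$ are algebraic Hecke characters.
\item They are motivic, since Jacobi sums arise from Fermat curves \cite{otsubo-regular-fermat}: $J^2$ comes from the product of two Fermat motives, and the finite twist from an Artin motive.
\end{itemize}

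The traces of $\rho^{ss}_{HD,\ell,1}$ and $\chi_{\ell,1}\oplus\chi_{\ell,2}$ then agree at a density-one set of primes. By Chebotarev and Brauer--Nesbitt, this gives $\rho^{ss}_{HD,\ell,1}\cong\chi_{\ell,1}\oplus\chi_{\ell,2}$ as $G_{\Q(\zeta_{4M})}$-representations. Enlarging the base field from $\Q(\zeta_M)$ to $\Q(\zeta_{4M})$ is harmless here, because the statement concerns periods up to $\bar\Q$-multiples.

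\textbf{Step 2: periods.} The Fermat motive whose Frobenius is $J_\p(a,b)$ has period $B(a,b)$ up to an algebraic number, because $B(a,b)=\int_0^1 t^{a-1}(1-t)^{b-1}dt$ is the integral of an algebraic differential on a Fermat quotient curve over a rational cycle. Its tensor square therefore has period $B(a,b)^2$. This gives $\mathcal{G}_1\sim B(1/4,r)^2$ and $\mathcal{G}_2\sim B(3/4,r)^2$, and the finite-order twist $c_r$ changes periods only by algebraic factors.

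\textbf{Main obstacle.} The delicate point is Step 2 combined with the Hodge-type matching: one must confirm that the relevant period is the holomorphic one, $B(a,r)^2$, and not its complex conjugate. I would handle this by checking Hodge types on the Fermat side via $\langle a\rangle+\langle r\rangle<1$. Since both characters occur in $(2,0)$, both periods should be holomorphic. Any ambiguity can be absorbed, because $B(1-a,1-r)$ is algebraically proportional to $\pi^2/B(a,r)$ by the reflection formula, but I would rather avoid needing that step.

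A secondary check is that Step 1 only needs agreement at split primes of $\Q(\zeta_{4M})$. Lemma \ref{deco} supplies exactly this, since the case $4\nmid q-1$ does not occur over that field.
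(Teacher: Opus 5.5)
Your route is the paper's route. You apply Theorem~\ref{thm:decomp} with $\chi_{\ell,i}=c_{\ell,r}\otimes J_\ell(k/4,r)^{\otimes 2}$, get the semisimple splitting from Lemma~\ref{deco} plus Chebotarev, get motivicity from Otsubo's Fermat motives tensored with a weight-zero motive, and read off the periods $B(k/4,r)^2$. Where you deviate, you are more careful than the paper in two places:
\begin{itemize}
\item Passing to $G_{\Q(\zeta_{4M})}$ repairs the paper's claim of a splitting over $G_{\Q(\zeta_M)}$. That claim cannot hold as stated when $4\nmid M$ (e.g.\ $m=5,6$), because $J_\p(1/4,r)$ requires $4\mid q-1$. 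The proof of Theorem~\ref{thm:decomp} is insensitive to this finite base change, as you say.
\item Your Hodge-type check is what pins the Frobenius character $J_\p(k/4,r)$ to the period $B(k/4,r)$ rather than the conjugate one. It works because the $(2,0,0)$ hypothesis forces $r<1/2-r$ by Lemma~\ref{galois}, hence $1/4+r<1$ and $3/4+r<1$.
\end{itemize}
Drop your fallback, though. In fact $B(1-a,1-r)\sim\pi/B(a,r)$, not $\pi^2/B(a,r)$, and $\pi^2/B(a,r)^2$ is not an algebraic multiple of $B(a,r)^2$. So a wrong Hodge type could not be absorbed; the check is genuinely needed.

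The step that fails as written is the normalization match at the start. You correctly compute $\mathcal{B}(HD)=B(1/2,1/2)\,B(r,1/2-2r)=\pi B(r,1/2-2r)$. However, $\{r,1/2-2r\}\neq\{1/2-2r,1/2-r\}$, since $r\neq 1/4$. Symmetry of $B$ therefore cannot turn this into the $\pi B(1/2-2r,1/2-r)$ of the statement. The ratio of the two prefactors is
\[
\frac{B(r,1/2-2r)}{B(1/2-2r,1/2-r)}=\frac{\Gamma(r)\Gamma(1-3r)}{\Gamma(1/2-r)^2}.
\]
This happens to be algebraic for $r=1/6$. It already fails the Deligne--Koblitz--Ogus criterion at $r=1/8$, where it becomes $\Gamma(1/8)\Gamma(5/8)/\Gamma(3/8)^2$, so it is not expected to be algebraic in general.

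What your argument actually proves is the identity with $P(HD,1)=\pi B(r,1/2-2r)\,{}_3F_2(HD,1)$ on the left. The paper's proof proves the same thing and passes over the discrepancy silently, ending with ``the claim follows.'' This reading agrees with the paper's own normalization by $B(r,3/2-2r)$ in the proof of Corollary~\ref{dixon2}. The prefactor in the statement should be read as $P(HD,1)$, not reconciled via $B(a,b)=B(b,a)$.
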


\begin{remark}\label{transcendental}
   Let $\sim$ denote equality up to an element of $\bar{\Q}.$ In general, $$B(1/4,r)/B(3/4,r)\sim \Omega_{-4}/B(1/4-r,1/4+r).$$ When $r=1/6,5/6$, it is a straightforward but tedious exercise using the properties of the gamma function to show that $\Omega_{-4}/B(1/4-r,1/4+r)$ is algebraic; see Lemma \ref{betaval} for a related problem. Otherwise, it is apparent that this value will seldom be algebraic; see the appendix of \cite{deligne-l-values} by Koblitz--Ogus and Wolfart--W\"ustholz \cite{wolfart-wustholz} for a more precise description of linear relations among beta values.
\end{remark}

\begin{proof}[Proof of Theorem \ref{mainconj2}]
   Let $J_{\ell}(r,s)$ denote the $\ell$-adic character of $G_{\Q(\zeta_M)}$ associated to the Jacobi sum $J_\p(r,s)$. Set $$\chi_{\ell,1}=c_{\ell,r}\otimes J_{\ell}(1/4,r)^{\otimes 2}\quad\quad \chi_{\ell,2}=c_{\ell,r}\otimes J_\ell(3/4,r)^{\otimes 2},$$ where $c_{\ell,r}$ is the finite order character appearing in Lemma \ref{deco}. To apply Theorem \ref{thm:decomp}, we must check that 1) there is an isomorphism of Galois representations up to semi-simplification, 2) that $\chi_{\ell,i}$ are motivic, and 3) the periods of the motives for $\psi_{i,\ell}$ are the claimed beta values. From Lemma \ref{deco}, it is immediate that for all but finitely many prime ideals $\mathfrak{p}$ of $\Q(\zeta_M)$, $$\tr(\rho_{HD(r,1/2-r),\ell,1}(\Frob_\p))=\chi_{\ell,1}(\Frob_\p)+ \chi_{\ell,2}(\Frob_\p),$$ with $\chi_{\ell,i}$ as above. Hence, for 1), the Chebotarev density theorem implies that the semi-simplification, denoted by $ss$, satisfies $$(\rho_{HD(r,1/2-r),\ell,1})^{ss}\cong \psi_{1,\ell}\oplus\psi_{2,\ell}.$$  Moving to 2), since $r\notin\Z$ (by primitivity), we know $J_\ell(1/4,r)$ and $J_\ell(3/4,r)$ are motivic from \cite{otsubo-regular-fermat}, and arise from the \'etale realization of Chow motives. Let $\text{CH}(K,\Lambda)$ denote the tensor category of Chow motives over $K$ with coefficients in $\Lambda$. We denote these motives by $\mathcal{J}_{1/4,r},\mathcal{J}_{3/4,r}\in \text{CH}(\Q(\zeta_M),\Q(\zeta_M))$. There is also weight 0 motive attached to $c_\p$, which we call $M_c$, lying in $\text{CH}(\Q(\zeta_M),\Q(\zeta_M))$. To obtain a motive for $\chi_{\ell,i}$, we take $$\Psi_i:=M_c\otimes \mathcal{J}_{k/4,r}^{\otimes 2},$$ for $k=1$ when $i=1$ and $k=3$ otherwise. Because the realization functors are tensor functors (see \cite[Theorem 4.2.5.1]{andre-introduction}, the \'etale realization of $\Psi_i$ is precisely $\chi_{\ell,i}$. Therefore, the Hecke characters are motivic. Finally for 3), from \cite[Appendix]{grossror}, the period of $\mathcal{J}_{k/4,r}$ is precisely the beta value $B(k/4,r)$. It follows that transcendental part of the period for the tensor square is $B(k/4,r)^2$. The period for the weight zero part, $M_c$ is necessarily algebraic, and so can be absorbed into $\alpha_1$ and $\alpha_2$. The claim follows.
\end{proof}

 There is an analogue of Lemma \ref{deco} for the other well-poised hypergeometric data of length 3, which have the form $\{\{a,b,r\},\{1,1+a-b,1+a-r\}\}$ due to to Greene \cite{greene} and McCarthy \cite{mccart}.  
Clearly, it is possible to extend the proof above to these cases so long as the Hodge numbers are $(2,0,0)$, and the two holomorphic periods are known. One can check that when $a=b$, the Hodge numbers are $(2,0,0)$ if $a/2>r$. For instance, when $r=1/m$ and $a=1/b$, the Hodge numbers are $(2,0,0)$ if and only if $m>2b$. The principal difficulty is rewriting the Gauss sums as Jacobi sums, analogous to Lemma \ref{deco}. We leave this task to the reader. There are also other formulas similar to Lemma \ref{deco} for higher rank hypergeometric motives which show that the Galois representation $\rho_{HD,\ell,1}^{ss}$ is completely decomposable into 1-dimensional representations for certain choices of $HD$. See \cite[Theorem 4]{deinesetal2} for a few such examples. It may be possible to use the techniques of this paper to understand these cases, but an essential role is played by the Hodge numbers, and extra care would be required in understanding that aspect of the situation.

\section{Proof of Theorem \ref{lvalirr}}\label{prooflvalirr}
The goal of this section is to explain the relationship between the theory in the previous section to $L$-values of Hecke characters in some special cases. We rely on the techniques of the Explicit Hypergeometric Modularity Method (EHMM) for these examples. Before we  prove the theorem, we recall some preliminaries from previous papers on the subject and explain the concept of regularity in our context.
 Then, we will prove Theorem \ref{lvalirr} in two parts. 

\subsection{The \texorpdfstring{$\mathbb{K}_2$}{K2} Functions}

The $\mathbb K_2$ functions appearing in the EHMM are crucial for proving Theorem \ref{lvalirr}. Assume $\lambda$ is the modular lambda function. We define the functions, studied initially in \cite{EHMMI},
 \begin{align*}
    \mathbb{K}_2(r,s)(\tau)&=2^{-4r}\lambda^r(1-\lambda)^{s-r-1}\pfq{2}{1}{1/2&1/2}{1&1}{\lambda}q\frac{d}{dq}\log \lambda.
\end{align*}
In \cite{EHMMI}, they also show there is an eta quotient form for these forms: $$\mathbb{K}_2(r,s)(\tau)=\frac{\eta(\tau/2)^{16s-8r-12}\eta(2\tau)^{8r+8s-12}}{\eta(\tau)^{24s-30}}.$$ Using the eta quotient form, this is a weight 3 modular form, congruence if the exponents are integral, and holomorphic if $0<r<s<3/2$. The set of $(r,s)$ that are holomorphic and congruence is denoted $\mathbb S_2$, and $|\mathbb S_2|=199$. Consider the constant $$N(r)=\frac{48}{\text{gcd}(24r,24)}.$$ When $r$ is clear, we will often simply write $N$. From the eta quotient form, the authors of \cite{EHMMI} derived the level and character of $\mathbb{K}_2(r,s)(N\tau)$ which are $N(r)N(s-r)$ and the Dirichlet character induced from $\left(\frac{-2^{24s}}{\cdot}\right),$ respectively. The following lemma about Fourier expansions will be useful occasionally. \begin{Lemma}\label{coeffcong}[Lemma 3.2 of \cite{rosen2}]
    Assume $(r,s)\in \mathbb S_2$, and $r=m/e$ is in reduced terms. Then the $n$th Fourier coefficient of $\mathbb{K}_i(r,s)(N\tau)$ can only be nonzero if  $n\equiv m\mod e$. 
\end{Lemma}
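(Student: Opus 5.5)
This lemma is quoted verbatim from \cite{rosen2}, so the plan is to re-derive it from the eta quotient form by tracking the exponent of $q$. In the eta quotient
$$\mathbb{K}_2(r,s)(\tau)=\frac{\eta(\tau/2)^{16s-8r-12}\eta(2\tau)^{8r+8s-12}}{\eta(\tau)^{24s-30}},$$
each $\eta(k\tau)^{e}$ factors as $q^{ke/24}$ times a power series in $q^{k}$, with integral exponents of $q^{k}$. The only fractional exponents come from $\eta(\tau/2)$, which contributes a series in $q^{1/2}$.

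The first step is to compute the leading exponent. It is
$$\frac{16s-8r-12}{48}+\frac{2(8r+8s-12)}{24}-\frac{24s-30}{24}.$$
This simplifies to $r/2$, matching the analytic form: $\lambda^r = (16q^{1/2})^r(1+O(q^{1/2}))$, and the factor $2^{-4r}$ cancels the $16^r$. So $\mathbb{K}_2(r,s)(\tau)=q^{r/2}\,h(q^{1/2})$, where $h$ is a power series.

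The second step is to rescale. Replacing $\tau$ by $N\tau$, the function becomes $q^{Nr/2}h(q^{N/2})$. With $r=m/e$ in lowest terms we have $N=48/\gcd(24r,24)$.

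The crux is the arithmetic check. We need $Nr/2\equiv m\pmod e$ (up to the normalization implicit in the lemma), and also that $N/2$ is a multiple of $e$, so that all further exponents stay in the same residue class mod $e$. Since $e\mid 24$ for $(r,s)\in\mathbb S_2$ (this is the congruence condition, because the eta exponents must be integral), we have $\gcd(24r,24)=24/e$. Hence $N=2e$ and $N/2=e$. The exponents then lie in $Nr/2+e\mathbb Z_{\ge0}=m+e\mathbb Z_{\ge 0}$.

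This gives exactly the support condition: the $n$th Fourier coefficient can be nonzero only if $n\equiv m\pmod e$. The statement for a general index $i$ in $\mathbb{K}_i$ follows the same pattern, using that the hypergeometric series and $q\frac{d}{dq}\log\lambda$ are power series in $q^{1/2}$ with no extra fractional shift.

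The main obstacle is confirming that $e\mid 24$, and hence $N=2e$, on all of $\mathbb S_2$. If $24r\notin\mathbb Z$ the formula for $N$ degenerates, so this step relies on the congruence (integral exponent) condition in the definition of $\mathbb S_2$.
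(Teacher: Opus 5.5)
Your proof is correct: the paper gives no argument of its own here and only cites \cite{rosen2}, and your exponent-tracking argument is complete and is the natural route. It shows $\mathbb{K}_2(r,s)(\tau)=q^{r/2}h(q^{1/2})$, hence $\mathbb{K}_2(r,s)(N\tau)=q^{m}h(q^{e})$ once $N=2e$. The step you flag as the obstacle is a one-line check: integrality of the eta exponents gives $24r=3(8r+8s-12)-(24s-30)+6\in\Z$, so $e\mid 24$, $\gcd(24r,24)=24/e$, and $N=2e$.
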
 The $\mathbb{K}_2(r,s)(N\tau)$ forms are not Hecke eigenforms unless $(r,s)=(1/2,1)$. A method suggested in \cite{EHMMI} and fully worked out in \cite{rosen2}, see Theorem \ref{cons} below, implies that a linear combination of several such functions with varying $r$ and $s$ is a Hecke eigenform. Set   \begin{equation}\label{frs2}
    F(r,s):=\frac{2^{1-4r}\pi^{-1}}{N}\cdot P(HD(r,s),1).
\end{equation} By \cite{EHMMII}, we then have the formula \begin{equation}\label{lval}
    L(\mathbb{K}_2(r,s)(N\tau),1)=F(r,s).
\end{equation} Using the functional equation, one can also find the $L$-value at 2. We can also obtain the $L$-value of $f_{r,s}$ by this method as well by making an appropriate linear combination.

\begin{definition}\label{conj}
 Assume $(r,s)$ and $(r',s')$ are pairs of rational numbers in $\mathbb S_2$. If there exists an integer $n\in (\Z/M\Z)^\times$ so that $(r,s)\equiv (nr',ns')\mod \Z$, then we say the pair are \textit{conjugate}. We will frequently refer to the vector space of all conjugates corresponding to holomorphic $\mathbb{K}_2(r,s)$ by $\{(r_j,s_j)\}$, and let $\mathfrak{S}_{r,s}$ be the vector space over $\bar{\Q}$ generated by the corresponding $\mathbb{K}_2(r_j,s_j)$.
 \end{definition}  

We recall the following geometric interpretation of Definition \ref{conj} from \cite{rosen2}. 
\begin{Lemma}\label{geom}[\cite{rosen2}, Lemma 7.1]
    Assuming $M\mid 24$, there is a canonical  $\bar{\Q}$-isomorphism of vector spaces $$\mathfrak{S}_{r,s}\cong \mathfrak{H}^{2,0}(HD(r,s),1)_\Q.$$ 
\end{Lemma}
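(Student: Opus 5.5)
The plan is to identify both sides with the same space of holomorphic differentials, using the $L$-value formula \eqref{lval} and the period description of $\mathfrak{H}(HD(r,s),1)$ from the preliminaries. Fix $(r,s)$ with $M\mid 24$ and let $\{(r_j,s_j)\}$ be its conjugates in $\mathbb S_2$. By Definition \ref{conj}, $\mathfrak{S}_{r,s}$ is spanned by the eta quotients $\mathbb{K}_2(r_j,s_j)(N\tau)$. On the other side, $\mathfrak{H}^{2,0}(HD(r,s),1)_\Q$ decomposes as $\bigoplus_{n\in(\Z/M\Z)^\times}\mathfrak{H}^{2,0}(n\cdot HD,1)$. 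Each summand has dimension $0$, $1$ or $2$. Its holomorphic periods, when they exist, are $P(HD(r_j,s_j),1)$, and also $P(HD(r_j,s_j+1),1)$ in the two-dimensional case. The map I would write down sends $\mathbb{K}_2(r_j,s_j)(N\tau)$ to the holomorphic differential whose period is $P(HD(r_j,s_j),1)$. This is normalised by \eqref{frs2} and \eqref{lval}, so that $L(\mathbb{K}_2(r_j,s_j)(N\tau),1)$ equals an algebraic multiple of $\pi^{-1}$ times that period.

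\textbf{Step 1: dimensions.} I would show the correspondence is bijective on the index sets. The eta quotient $\mathbb{K}_2(r',s')$ is holomorphic exactly when $0<r'<s'<3/2$. A conjugate class mod $\Z$ of $(nr,ns)$ may produce zero, one or two representatives in this range, namely $s'$ and $s'+1$. I would check that this count agrees with the Hodge number $h^{2,0}$ of $\mathfrak{H}(n\cdot HD,1)$. The Hodge numbers come from the explicit differentials $X_1^{\alpha}X_2^{\beta}\,dX_1dX_2/Y$ on $\bar C_{HD,1}$ in \eqref{eq:chdt}: their integrability and holomorphy conditions at the boundary divisors translate into exactly the inequalities $0<r'<s'<3/2$. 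This is the main obstacle. The holomorphy conditions for the twisted forms $Y^{-n}$ must be matched precisely with the eta-exponent inequalities, including the boundary cases where the exponent $s'-r'-1$ is an integer.

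\textbf{Step 2: linear independence.} I would show the $\mathbb{K}_2(r_j,s_j)(N\tau)$ are linearly independent over $\bar\Q$. Lemma \ref{coeffcong} should do this: conjugates with distinct $r_j$ have Fourier supports in disjoint residue classes mod $e$. Two forms with the same $r_j$, namely $s_j$ and $s_j+1$, are separated by comparing the orders of vanishing in their $q$-expansions read off from the eta quotients. Together with Step 1, the map is then a $\bar\Q$-linear isomorphism of vector spaces of equal dimension.

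\textbf{Step 3: canonicity.} The basis on the de Rham side is $\bar\Q$-rational, because the differentials are algebraic forms on a variety defined over $\Q(\zeta_M)$. The basis on the modular side has rational $q$-expansions. The isomorphism is therefore defined over $\bar\Q$. It is canonical in the sense that it is characterised by matching $L$-values at $s=1$ with periods, and is independent of the choice of desingularisation.
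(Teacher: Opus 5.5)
Your strategy of sending each holomorphic conjugate to the explicit Euler-type differential with period $P(HD(r_j,s_j),1)$ is in the spirit of the explicit computation with differentials that the paper cites from \cite{rosen2}. However, Steps 1 and 2 rest on a miscount and are false as stated.

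Write a class mod $\Z^2$ as $(r_0,s_0)$ with $0<r_0<1$ and $0<s_0\le 1$. The representatives with $0<r'<s'<3/2$ may also shift $r$, not only $s$. When $r_0<s_0<1/2$ there are three of them: $(r_0,s_0)$, $(r_0,s_0+1)$ and $(r_0+1,s_0+1)$. By Lemma \ref{galois} this is exactly the case $h^{2,0}=2$ at $t=1$. The paper's own examples are $(1/8,3/8),(1/8,11/8),(9/8,11/8)$ and $(1/6,1/3),(1/6,4/3),(7/6,4/3)$. In all other cases there is at most one representative. The three forms are linearly dependent: since $(1-\lambda)+\lambda=1$, the definition of $\mathbb{K}_2$ gives
\[
\mathbb{K}_2(r,s)=\mathbb{K}_2(r,s+1)+16\,\mathbb{K}_2(r+1,s+1),
\]
which is the relation used in Example 1 and Lemma \ref{ngal}. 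So the generators are not in bijection with a basis of $\mathfrak{H}^{2,0}$, and the independence claimed in Step 2 fails.

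Neither of your tools would catch this. Lemma \ref{coeffcong} puts $r_0$ and $r_0+1$ in the same residue class. The eta quotient has order $r/2$ at $i\infty$ regardless of $s$, so $\mathbb{K}_2(r,s)$ and $\mathbb{K}_2(r,s+1)$ begin with the same multiple of $q^{Nr/2}$. Orders of vanishing in their $q$-expansions therefore do not separate them.

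To repair the argument, first define the map on generators by $\mathbb{K}_2(r,s)(N\tau)\mapsto 2^{-4r}\omega_{r,s}$, where $\omega_{r,s}$ is the explicit form with Euler integrand $t^{r-1}(1-t)^{s-r-1}$. Saying ``the differential whose period is $P$'' is not yet a definition, because a single period does not single out a form in a two-dimensional space.

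Next, check that the map is well defined. The identity $t^{r-1}(1-t)^{s-r-1}=t^{r-1}(1-t)^{s-r}+t^{r}(1-t)^{s-r-1}$ gives $\omega_{r,s}=\omega_{r,s+1}+\omega_{r+1,s+1}$. Once the factor $2^{-4r}$ from \eqref{frs2} is accounted for, this is exactly the image of the displayed relation.

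Then handle independence and spanning. On the modular side, $\mathbb{K}_2(r,s)$ and $\mathbb{K}_2(r,s+1)$ have the same leading coefficient but differ by $16\,\mathbb{K}_2(r+1,s+1)\neq 0$, so they are not proportional. On the de Rham side you must show that $\omega_{r,s}$ and $\omega_{r,s+1}$ span $\mathfrak{H}^{2,0}$. Exhibiting holomorphic monomial forms only bounds $h^{2,0}$ from below. The upper bound has to come from Lemma \ref{galois} together with the drop of $h^{1,1}$ by one at $t=1$.

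This matches the paper's remark that in the two-dimensional case the holomorphic periods are $P(HD(r,s),1)$ and $P(HD(r,s+1),1)$, with the third conjugate absorbed by the contiguous relation.
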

 From Lemma \ref{geom}, it is immediate that the dimension of $\mathfrak{S}_{r,s}$ is $\varphi(M)$. The Galois condition was used in \cite{EHMMI} to ensure that the associated hypergeometric Galois representations are extendable to $\mbox{Gal}(\ol\Q/\Q)$, which we denote by $G_\Q$ going forward. However, the definition from \cite{EHMMI} was ill-defined, and so we use the following from \cite{rosen2}.
\begin{definition}[\cite{rosen2}]\label{galoispre}
    A family $(r_j,s_j)$ of holomorphic conjugates is said to be \textit{Galois} if $r_j\neq r_i$ for all $j\neq i$.
\end{definition}
 In the following section, we explain that the Galois definition is better understood as a regularity condition on the underlying Galois representations.

\subsection{Regular and Irregular Families}\label{regiireg}
The purpose of this section is to relate the Hodge-theoretic discussion from the first half of this paper with the terminology used in the previous papers in the EHMM series, specifically \cite{EHMMI,rosen2}. The main result is Proposition \ref{gal-reg}, but note that this is not strictly necessary for the proofs later in the paper. We first recall the definition of regular motives. \begin{definition}[See e.g. \cite{patrikistaylot}]
    A hypergeometric datum is said to be \textit{regular} if the Hodge numbers of $\mathfrak{H}(c\cdot HD,t)$ are are all less than or equal 1, or equivalently, if $\rho_{c\cdot HD,\ell,t}$ has distinct Hodge--Tate weights, for all $c\in (\Z/M\Z)^\times$.
\end{definition}

Assume $f$ is a newform, and $\mathcal V_f$ denotes the $\bar{\Q}$-vector space generated by $f\otimes \chi$ for $\chi\in \widehat{(\Z/M\Z)^\times}$. We recall the following.
\begin{theorem}\label{cons}[\cite{rosen2}]
If $\mathfrak{S}_{r,s}$ is a \textit{Galois} family, then there exists a newform $f_{r,s}$ so that $$\mathcal{V}_{f_{r,s}}=\mathfrak{S}_{r,s}.$$ When $\varphi(M)>2$, then $f_{r,s}$ is non-CM.
\end{theorem}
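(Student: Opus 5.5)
The statement is Theorem \ref{cons}, quoted from \cite{rosen2}. I would prove it in three steps: a Hecke argument on the $\bar{\Q}$-space $\mathfrak{S}_{r,s}$, an identification of the resulting eigenforms as twists of one newform, and a CM count.

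\textbf{Step 1: stability and dimension.} By Lemma \ref{geom}, $\mathfrak{S}_{r,s}\cong \mathfrak{H}^{2,0}(HD(r,s),1)_\Q$, so $\dim \mathfrak{S}_{r,s}=\varphi(M)$. The Galois hypothesis (the $r_j$ are pairwise distinct) gives a finer structure. By Lemma \ref{coeffcong}, the Fourier expansion of $\mathbb{K}_2(r_j,s_j)(N\tau)$ is supported on $n\equiv m_j \bmod e$, where $r_j=m_j/e$. Since the $r_j$ are distinct, these supports lie in distinct residue classes. Hence the $\varphi(M)$ forms are linearly independent, and each spans a one-dimensional isotypic piece for the operators $f\mapsto f\otimes\chi$ with $\chi\in\widehat{(\Z/M\Z)^\times}$. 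Next I would show that $\mathfrak{S}_{r,s}$ is stable under the Hecke operators $T_p$ for $p\nmid NM$. The operator $T_p$ sends the coefficient support in class $m_j$ to the class $p\,m_j$. Using the eta-quotient form and the level/character formula of \cite{EHMMI}, one checks that $T_p\mathbb{K}_2(r_j,s_j)$ is a multiple of $\mathbb{K}_2(r_k,s_k)$, where $(r_k,s_k)$ is the conjugate by $p$. Geometrically, this is the statement that Frobenius at $p$ permutes the isotypic components $\mathfrak{H}(n\cdot HD,1)$ of $\mathfrak{H}(HD,1)_\Q$, and that the identification in Lemma \ref{geom} is Hecke-equivariant.

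\textbf{Step 2: producing the newform.} On the twist-eigenspaces, each $T_p$ acts as a weighted permutation, so the commutative Hecke algebra together with the twisting operators acts on the $\varphi(M)$-dimensional space. I would take the combination
\[
f_{r,s}=\sum_j c_j\,\mathbb{K}_2(r_j,s_j)(N\tau),
\]
with the $c_j$ chosen so that it is a simultaneous $T_p$-eigenform. Its twists $f_{r,s}\otimes\chi$ run over the other eigenvectors. The twists are distinct, because their coefficient supports differ by characters, so together they span all of $\mathfrak{S}_{r,s}$, giving $\mathcal{V}_{f_{r,s}}=\mathfrak{S}_{r,s}$. Newness follows from multiplicity one: the Galois representation of the eigenform is the two-dimensional $\ell$-adic realization $\rho_{HD,\ell,1}$ (after twisting), by the Lemma \ref{comp}-type comparison. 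Alternatively, one can check directly that no oldform fits the level and character.

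\textbf{Step 3: non-CM for $\varphi(M)>2$.} I expect this to be the main obstacle. If $f_{r,s}$ had CM by an imaginary quadratic $K$, then $f\otimes\chi_K=f$. The twist orbit of a CM form therefore has size at most $\varphi(M)/2$ inside $\mathfrak{S}_{r,s}$. I would try to show that the twisting group acts freely, using the distinct-$r_j$ condition. This reduces to showing that $\chi_K$ is not trivial on the relevant residue-class decomposition of the coefficient supports. If that argument is not decisive, I would fall back on an irregularity/Hodge argument: CM together with regular Hodge type would force $\rho^{ss}$ to split over $\Q(\zeta_M)$ into Jacobi-sum characters, as in Lemma \ref{deco}. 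I would then compare with the Galois condition, which by Proposition \ref{gal-reg} encodes regularity, to get a contradiction when $\varphi(M)>2$.
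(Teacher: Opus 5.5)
Your proposal has two genuine gaps, both in the step that would have to use the Galois hypothesis. Note that the paper itself does not reprove this theorem; it quotes it from \cite{rosen2}.

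\textbf{Step 2 does not prove $\mathcal{V}_{f_{r,s}}=\mathfrak{S}_{r,s}$.} Write $f=\sum_j c_jK_j$ with $K_j=\mathbb{K}_2(r_j,s_j)(N\tau)$ supported on the class $m_j$. Then $f\otimes\chi=\sum_j\chi(m_j)c_jK_j$, so by orthogonality of characters the twists of $f$ span exactly $\bigoplus_{c_j\neq 0}\bar{\Q}K_j$. They therefore span $\mathfrak{S}_{r,s}$ if and only if \emph{every} $c_j$ is nonzero. ``Their supports differ by characters'' does not show this, and neither does Step 1, since your ``multiple'' of $K_k$ may be $0$. This is a real issue: in Theorem \ref{cons2} the eigenforms are CM and supported on only half of the classes.

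Full support is equivalent to $f$ having no CM by a field whose quadratic character factors through $(\Z/M\Z)^\times$. So the first half of your Step 3 (a CM twist orbit has size at most $\varphi(M)/2$) presupposes exactly what Step 2 leaves open.

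The missing idea is to use the Galois hypothesis through $\dim V_k^{2,0}=1$. Make the reduction to $e=M$ from the proof of Theorem \ref{cons2}. Then every coefficient of an element of $\mathfrak{S}_{r,s}$ sits at some $n$ prime to the level, since the level's primes all divide $M$.
\begin{itemize}
\item A nonzero eigenform $g$ satisfies $a_n(g)=\lambda_n(g)a_1(g)$ at such $n$, so its $K_1$-component is nonzero.
\item $\mathfrak{S}_{r,s}$ is a Hecke-stable space of cusp forms, so it has an eigenbasis, and by the previous point each eigensystem occurs only once.
\item These $\varphi(M)$ eigensystems form a twist-stable set. Its orbits have size $\varphi(M)$ or $\varphi(M)/2$, the latter only for CM by a character modulo $M$.
\item Two orbits of size $\varphi(M)/2$ would span complementary subspaces that both contain $K_1$, which is absurd.
\end{itemize}
Hence there is a single orbit. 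This gives $\mathcal{V}_f=\mathfrak{S}_{r,s}$ and also excludes CM by characters modulo $M$.

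\textbf{Step 3 has a second gap.} Your twist count says nothing about CM by a field $K$ whose character does not factor through $(\Z/M\Z)^\times$, and this genuinely occurs. The Galois family $\{(1/3,7/6),(2/3,5/6)\}$ in row 3 of Table \ref{tab} ($M=6$) gives a form with CM by $\Q(i)$. Compare Lemma \ref{deco}, where the traces vanish when $4\nmid q-1$; also $\mathbb{K}_2(1/3,7/6)(6\tau)=\eta(3\tau)^4\eta(6\tau)^2$ has $a_7=a_{19}=0$. So regularity alone does not exclude CM, and the hypothesis $\varphi(M)>2$ must be used.

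Since the levels $N(r)N(s-r)$ involve only the primes $2$ and $3$, the candidate fields are $\Q(\sqrt{-3}),\Q(i),\Q(\sqrt{-2}),\Q(\sqrt{-6})$.
\begin{itemize}
\item For $M=8$ (where the level is a power of $2$) and for $M=24$, every admissible $K$ has its character modulo $M$, so the orbit argument above finishes the proof.
\item For $M=12$, CM by $\Q(\sqrt{-2})$ or $\Q(\sqrt{-6})$ still has to be excluded separately, for instance via the nebentypus $\left(\frac{-2^{24s}}{\cdot}\right)$ and the level, or by direct computation.
\end{itemize}

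Your fallback cannot supply this. If $K\not\subset\Q(\zeta_M)$, then $\rho_f|_{G_{\Q(\zeta_M)}}$ is induced from $G_{K(\zeta_M)}$ and is irreducible, because the two Hodge--Tate weights differ. So there is no splitting into Jacobi-sum characters to exploit. In any case, comparing $\rho_f$ with $\rho_{HD,\ell,1}$ would require a modularity theorem you do not have.

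\textbf{Smaller points.} The same objection applies to your newness argument: Lemma \ref{comp} is only an isomorphism of $\C$-vector spaces. Newness up to twist follows instead from the support: an eigenform whose coefficients all sit at $n$ prime to the level is its newform twisted by the trivial character modulo $M$. Finally, Hecke stability should be quoted from Lemma 3.4 of \cite{rosen2}, as in the proof of Theorem \ref{cons2}. Lemma \ref{geom} carries no Hecke structure, so your geometric heuristic does not establish it.
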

 The essential point of this section is that Theorem \ref{cons} still holds if we replace the adjective Galois with regular. The set $\mathbb S_2^{irr}$ from Theorem \ref{lvalirr} is defined as $$\mathbb S_2^{irr}=\{(r,s)\in \mathbb S_2\,|\, \mathfrak{S}_{r,s} \text{ is irregular and primitive}\}.$$ Recall the definition of primitive is given in Definition \ref{ar}. We prove an analogue of Theorem \ref{cons} in Theorem \ref{cons2} below.

 In general, regular motives or Galois representations are much better understood; for example, there are powerful automorphy lifting theorems for hypergeometric Galois representations that only apply if the representation is regular, see \cite{patrikistaylot}. For this reason, much of the literature focuses on regular hypergeometric motives, which include the Dwork family studied in \cite{dwork-p_adiciccycles,katz-dwork-family,kedlaya-frobenius}. See also \cite{ahlgrenono,allen-supercongruences,EHMMI,asairec,EHMMIII,rodriguez-villegas-111,supercongr,patrikistaylot} for further examples. The irregular families are rarely if ever studied. In the forthcoming preprint \cite{LLL}, a few examples over $\Q$ are provided.  As noted in the previous section, the existing literature by the author and her collaborators on the EHMM have confusing and conflicting definitions of regularity. The purpose of the Galois families (Definition \ref{galoispre}) of \cite{EHMMI,rosen2} for the $\mathfrak{H}(HD(r,s),1)$ families was effectively a regularity condition. In this section, we briefly reformulate the Galois definition as a statement about regularity so that the other papers in the EHMM series fit more consistently in the literature.

To make sense of this, note that the regularity condition for $HD(r,s)$ is very simple, since the rank of the Hodge structure is small. At the parameter $t=1$, the Hodge number $h^{1,1}$ is reduced by one. 
\begin{Lemma}\label{galois}
   Let  $0<r<s\leq 1$. The hypergeometric Betti--de Rham structure $\mathfrak{H}(HD(r,s),t)$ has Hodge numbers $(1,1,1)$ if $s>1/2$, and Hodge numbers $(2,1,0)$ otherwise. If $s<r$, the Hodge numbers are $(1,1,1)$ if $s<1/2$ and $(0,1,2)$ otherwise. 
\end{Lemma}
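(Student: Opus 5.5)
The plan is to read off the Hodge numbers from the combinatorics of the hypergeometric parameters rather than from the geometry of $\bar{C}_{HD,t}$ directly. For a generic fiber $t\neq 0,1$, Fedorov's computation \cite{fedorov} (see also Katz \cite{katzrig}) expresses the Hodge numbers of the rank $n$ hypergeometric structure attached to $\{\{a_1,\dots,a_n\},\{b_1,\dots,b_n\}\}$ through an interlacing count. Each parameter $a_i$ is assigned the Hodge degree
$$p(a_i)=\#\{j: b_j< a_i\}-\#\{k: a_k<a_i\} + c,$$
with all parameters viewed in $[0,1)$ (so $b=1$ is read as $0$) and $c$ a global normalizing shift. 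We then take $h^{p,2-p}=\#\{i: p(a_i)=p\}$. By Lemma \ref{comp} and the construction in Definition \ref{hdgdr}, the de Rham incarnation of $\mathfrak{H}(HD,t)$ is the fiber of Katz's local system. So it suffices to apply this formula to $a=\{1/2,1/2,r\}$, $b=\{0,0,s\}$ and fix $c$ so that the weight is $2$.

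The first step is to settle the tie convention, since the datum has repeated parameters: $1/2$ twice among the $a$'s, and $1$ (that is, $0$) twice among the $b$'s. The repeated $a$'s correspond to a unipotent Jordan block of local monodromy at $0$. The limiting mixed Hodge structure then forces the two copies of $1/2$ to occupy adjacent Hodge degrees. Concretely, in the count for the second copy, the first copy is counted as lying below it. I would justify this either by Fedorov's general statement or by a continuity argument, perturbing one copy of $1/2$ to $1/2\pm\epsilon$ so that the Hodge numbers do not jump.

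The second step is the case analysis for $0<r<s\leq 1$.
\begin{enumerate}
\item If $r<1/2<s$, then $p(r)=2$ and the two copies of $1/2$ get degrees $1$ and $0$ (after the shift). This gives $(1,1,1)$.
\item If $r<s<1/2$, then both $b=0$ and $b=s$ lie below $1/2$. We get $p(r)=2$ and the copies of $1/2$ get degrees $2$ and $1$. This gives $(2,1,0)$.
\end{enumerate}
The case $s<r$ is handled the same way by reordering $r$ relative to $s$ and $1/2$. The subcases $r<1/2$ and $r>1/2$ are treated separately, and each should produce $(1,1,1)$ or $(0,1,2)$ as stated. For $s<r$ I expect a duality that I have not checked: complex conjugation should send $HD(r,s)$ to $HD(1-r,1-s)$ and reverse the Hodge vector. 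If this holds, the $s<r$ statements follow from the $r<s$ ones, which gives a consistency check.

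As an independent sanity check I would use the explicit differentials $X_1^{i}X_2^{j}\,dX_1\wedge dX_2/Y$ on $C_{HD,t}$ from \cite[Appendix II]{rosen2}. By the Euler integral \eqref{hypint}, the form $dX_1\wedge dX_2/Y$ is holomorphic exactly when the exponents in $B(r,s-r)B(1/2,1/2)$ give convergent integrals near the boundary divisors. Counting such forms in the chosen $\mu_M$-isotypic piece should recover $h^{2,0}\in\{1,2\}$, matching the dichotomy $s>1/2$ versus $s<1/2$.

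I expect the main obstacle to be the normalization. This includes the tie convention for the repeated parameters, the identification of $1$ with $0$, and the global shift $c$ that fixes the weight. The sign of the shift depends on whether one uses $\mathfrak{H}(HD,t)$ or its dual/conjugate isotypic component, and this is exactly what distinguishes $(2,1,0)$ from $(0,1,2)$. I would pin it down by one explicit computation: for $r<s<1/2$, the form $dX_1\wedge dX_2/Y$ is holomorphic and its period is $P(HD,t)$, so $h^{2,0}\geq 1$ in that case. This anchors the orientation of the count.
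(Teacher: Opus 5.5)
Your route is the paper's: the interlacing count is the zigzag procedure the paper uses for $r<s$, and the duality $HD(r,s)\leftrightarrow HD(1-r,1-s)$ is exactly how the paper handles $s<r$ (Hodge symmetry of $\mathfrak{H}(HD(r,s),t)\oplus\mathfrak{H}(HD(1-r,1-s),t)$, plus rank $3$).

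\textbf{The gap.} You demote this duality to an unchecked consistency check and propose to treat $s<r$ ``the same way''. That step does not work. The count fixes the Hodge vector only up to an overall shift, since a complex variation on an irreducible local system is unique only up to shift, and the shift depends on the datum. For $1/2<s<r$ your formula gives degree $1+c$ for $r$ and degrees $2+c,\,1+c$ for the two copies of $1/2$. With the normalization $c=0$ from your two cases of part one, this yields $(1,2,0)$, which is wrong; the stated $(0,1,2)$ requires $c=-1$. Weight $2$ allows both values of $c$. Your anchor (a holomorphic form when $r<s<1/2$) concerns a different datum, so it cannot decide between them.

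\textbf{The fix.} The duality decides it, and it is routine to verify. Complex conjugation on $H^2(\bar{C}_{HD,t},\C)$ sends the $\chi$-isotypic piece to the $\bar\chi$-isotypic piece and $H^{p,q}$ to $H^{q,p}$. The character $\bar\chi$ corresponds to negating all parameters modulo $\Z$, i.e.\ to $HD(1-r,1-s)$.
\begin{itemize}
\item For $1/2<s<r$ one has $0<1-r<1-s<1/2$, so part one gives $(2,1,0)$ for the conjugate datum, hence $(0,1,2)$ here.
\item For $s<r$ with $s<1/2$, the same argument gives $(1,1,1)$.
\end{itemize}
So make the duality the argument. Your split of $s<r$ by $r$ does not match the dichotomy in $s$ anyway, and with the duality it becomes unnecessary. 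Also add the omitted case $1/2<r<s$ to part one; there the degrees are $2,1,0$, giving $(1,1,1)$.

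\textbf{A second, separate issue.} The holomorphicity test behind your anchor and your sanity check is incomplete. Convergence of the Euler integral near $0$ and $1$ only sees $r>0$ and $s>r$. By that test, the forms with periods $P(HD(r,s+1),t)$, $P(HD(r,s+2),t),\dots$ would also count, and you would not recover $h^{2,0}\in\{1,2\}$.

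You must also check the divisors at infinity and the exceptional curves of the resolution. Let $y$ be the variable carrying $y^{r-1}(1-y)^{s-r-1}$ and set $v=1/y$. Along $y=\infty$ the form behaves like $v^{1/2-s}\,dv$, which forces $s<3/2$; this is the bound defining $\mathbb{S}_2$. Applying this condition to the form with period $P(HD(r+1,s+1),t)$ is what gives $h^{2,0}=2$ exactly when $s<1/2$. The conclusion of your anchor is nevertheless correct.
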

\begin{proof}
We use the zigzag procedure \cite{fedorov,LLL,longyang}, which tells us to order the parameters by size, and then move up one unit for each parameter in $\alpha$ and move down one unit for each parameter of $\beta$. If $s>1/2$, since $r<s$ the ordering of the parameters will be either: $\{r,1/2,1/2,s,1,1\}$ or $\{1/2,1/2,r,s,1,1\}$, both of which have three $\alpha$ terms followed by three $\beta$ terms. Thus, the zigzag diagram will be of the shape \begin{center}
    \includegraphics[scale=.3]{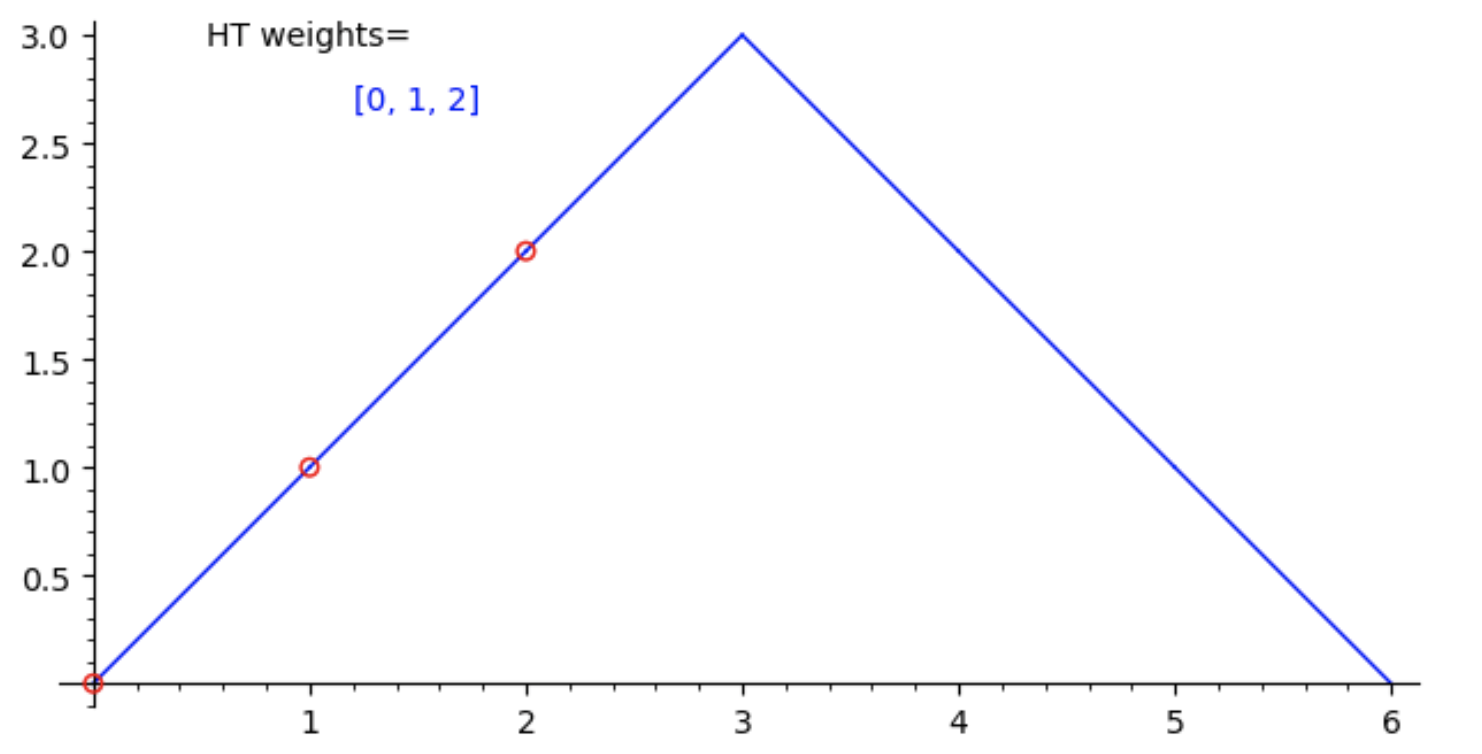}
\end{center}
with Hodge numbers $(1,1,1)$. Conversely, if $s<1/2$, we will necessarily have the ordering $\{r,s,1/2,1/2,1,1\}$, which means there is an $\alpha$ term followed by a $\beta$ term. Thus, the zigzag diagram has the form
\begin{center}
    \includegraphics[scale=.3]{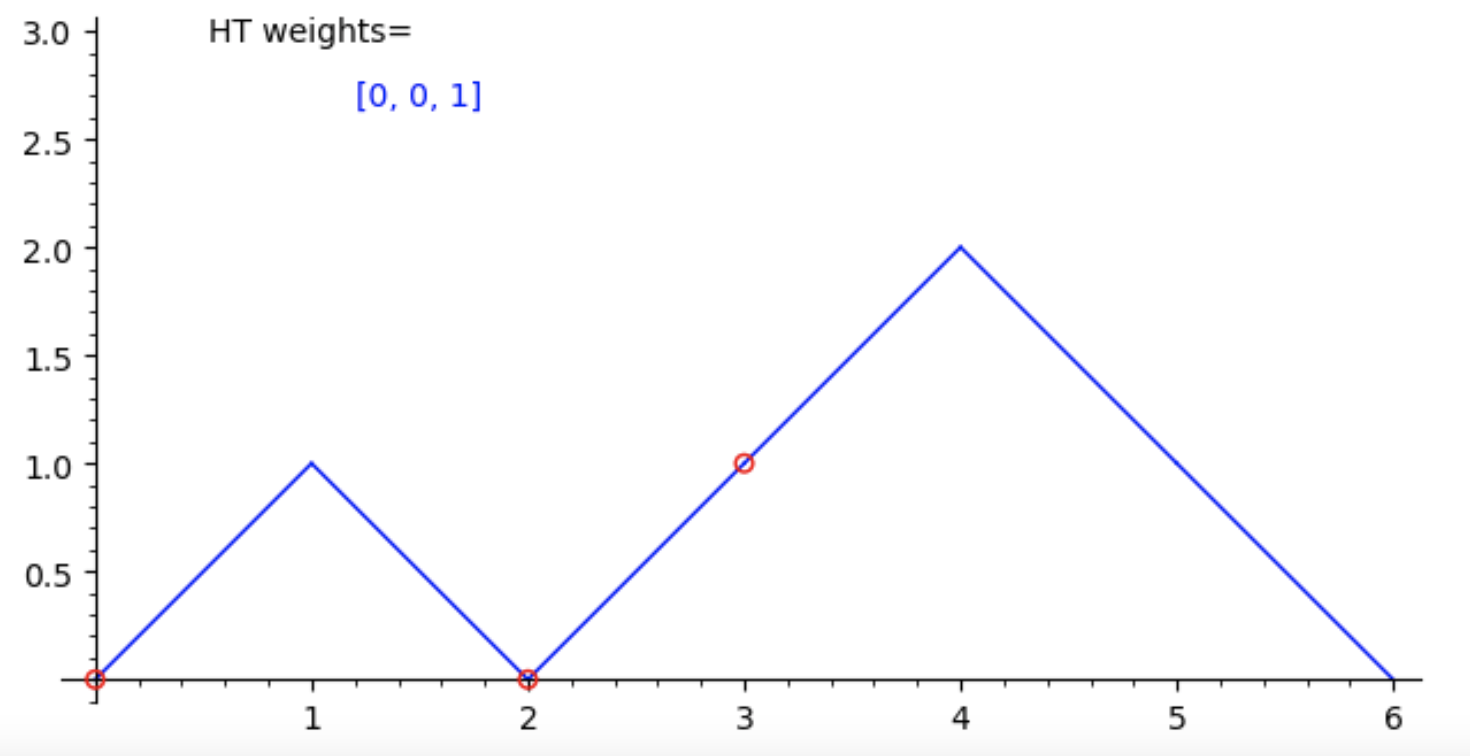}
\end{center}
with Hodge numbers $(2,1,0)$. For the second claim, note $\mathfrak{H}(HD(r,s_1),1)\oplus\mathfrak{H}(HD(1-r,1-s_1),1)$ is defined over a totally real field and so satisfies Hodge symmetry. The Hodge structure has effective weight 2 by work of Beukers--Heckman \cite{beukersheckman}, i.e. the Hodge numbers $h^{2,0}$ and $h^{0,2}$ cannot both be zero. Furthermore the rank of $\mathfrak{H}(HD(1-r,1-s_1),1)$ is 3, so the claim follows.
\end{proof}

As a result, the relationship with Definition \ref{galoispre} is as follows.

\begin{proposition}\label{gal-reg} Assume $\{(r_j,s_j)\}$ is the set of conjugate pairs corresponding $\mathfrak{H}^{2,0}(HD(r,s),1)$, and further assume that $\varphi(M)>2$. Let $b$ denote the denominator of $r_j$, which does not depend on $j$. Let $R_j=\{(r_i,s_i)\in \mathfrak{S}_{r,s}\,|\, r_i=j/b\}$. Then a family is regular if and only if $|R_i|=|R_j|$ for all $i,j\in (\Z/b\Z)^\times$. In particular, if the family is Galois $|R_j|=1$ for all $j$, and so every Galois family is regular.
\end{proposition}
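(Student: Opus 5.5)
The plan is to translate regularity into a count of Hodge numbers at $t=1$ for each conjugate, and then use the fact that the holomorphic differentials are indexed by the conjugate pairs in $\mathfrak{S}_{r,s}$. By Lemma \ref{geom}, $\dim \mathfrak{H}^{2,0}(HD(r,s),1)_\Q=\varphi(M)$. This space splits as $\bigoplus_{c}\mathfrak{H}^{2,0}(c\cdot HD(r,s),1)$ over $c\in(\Z/M\Z)^\times$, and each conjugate $(r_j,s_j)$ in $\mathfrak{S}_{r,s}$ contributes exactly one holomorphic differential, with period $P(HD(r_j,s_j),1)$, to the summand for the $c$ with $c\cdot HD(r,s)\equiv HD(r_j,s_j)$. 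So for each $c$, the Hodge number $h^{2,0}$ of $\mathfrak{H}(c\cdot HD,1)$ equals the number of $j$ with $(r_j,s_j)\equiv (cr,cs)\bmod\Z$, and this value is $0,1$ or $2$.

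The first step is to check that at $t=1$ the rank is $2$ and that Lemma \ref{galois} pins down the possible Hodge numbers. Dropping $h^{1,1}$ by one gives $(2,0,0)$ or $(0,0,2)$ in the irregular case and $(1,0,1)$ in the regular case. Regularity for all $c$ is therefore equivalent to the following statement:
\[
\text{for every } c\in(\Z/M\Z)^\times,\ \text{exactly one conjugate pair lies in the class } (cr,cs) \bmod \Z.
\]

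The second step is to group by the first coordinate. Since $M$ is the common denominator of $r$, $s$ and $1/2$, the map $c\mapsto cr \bmod \Z$ from $(\Z/M\Z)^\times$ to $(\Z/b\Z)^\times$ is surjective, with fibers of size $\varphi(M)/\varphi(b)$. Hence
\[
|R_i|=\sum_{c\,:\,cr\equiv i/b} h^{2,0}\bigl(\mathfrak{H}(c\cdot HD,1)\bigr).
\]
If the datum is regular, every summand is $1$, so $|R_i|=\varphi(M)/\varphi(b)$ is independent of $i$. For the converse, suppose some $c$ has $h^{2,0}=2$. By Hodge symmetry, applied to $\mathfrak{H}(c\cdot HD,1)\oplus\mathfrak{H}(-c\cdot HD,1)$ as in the proof of Lemma \ref{galois}, the conjugate $-c$ has $h^{2,0}=0$. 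I would then need to argue that this imbalance is not compensated inside the fibers over $i=cr$ and $-i=-cr$. Here the hypothesis $\varphi(M)>2$ and the explicit shape of the zigzag (which depends on whether $cs$ is above or below $1/2$ relative to $cr$) must be used to show that the whole fiber over $i$ is of type $(2,0,0)$ while the fiber over $-i$ is of type $(0,0,2)$. That would give $|R_i|\neq |R_{-i}|$. This compensation argument is the step I expect to be the main obstacle. I would first try to show that the type of $c\cdot HD$ depends only on $cr \bmod \Z$, using $s-r\in\{\text{half-integers}\}$-type constraints coming from $M$. Failing that, I would fall back on direct inspection of the finitely many data with $M\mid 24$ in $\mathbb S_2$.

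Finally, the Galois case follows immediately. If the family is Galois, then the $r_j$ are distinct, so $|R_j|\le 1$. The total count is $\sum_j |R_j|=\varphi(M)$, but the $R_j$ are indexed by only $\varphi(b)\le\varphi(M)$ classes. With the fiber count above this forces $\varphi(b)=\varphi(M)$ and $|R_j|=1$ for all $j$, so by the equivalence just proved the family is regular.
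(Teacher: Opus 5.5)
Your setup is sound, and your forward direction is cleaner than the paper's. Writing $|R_i|=\sum_{c\,:\,cr\equiv i/b} h^{2,0}(\mathfrak{H}(c\cdot HD,1))$ over fibers of size $F=\varphi(M)/\varphi(b)$ gives $|R_i|=F$ at once in the regular case, with no need for the orbit--stabilizer argument. The gap is the converse, which you flag yourself, and the route you propose for it fails.

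The type of $c\cdot HD$ does \emph{not} depend only on $cr \bmod \Z$. Take $HD(1/3,5/12)\in\mathbb S_2$, so $M=12$ and $b=3$. For $c=1$ you get $(1/3,5/12)$, of type $(2,0,0)$ by Lemma \ref{galois}. For $c=7$ you get $(7/3,35/12)\equiv(1/3,11/12)$, of type $(1,0,1)$. Both lie over $r\equiv 1/3$. The fiber over $2/3$ is mixed as well: $c=5$ gives type $(1,0,1)$ and $c=11$ gives type $(0,0,2)$. So you cannot show the fiber over $i$ is uniformly $(2,0,0)$, and a case-by-case inspection of $\mathbb S_2$ would not be a proof.

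The missing idea is the one the paper uses. By Lemma \ref{galois}, type $(2,0,0)$ forces $0<r<s<1/2$ for the representatives in $(0,1)$. So a $(2,0,0)$ summand can only lie over a class $i$ with $i/b<1/2$. Over $b-i$, where the reduced first coordinate exceeds $1/2$, every summand has $h^{2,0}\in\{0,1\}$. Now combine this with your pairing $h^{2,0}(c)+h^{2,0}(-c)=2$, which gives $|R_i|+|R_{b-i}|=2F$:
\begin{itemize}
\item $|R_{b-i}|\le F$, with equality iff every summand over $b-i$, equivalently over $i$, is of type $(1,0,1)$.
\item A single $(2,0,0)$ summand over $i$ therefore forces a $(0,0,2)$ summand over $b-i$.
\item Hence $|R_{b-i}|<F<|R_i|$.
\end{itemize}
No uniformity of the fibers is needed. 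When $b=2$ there is nothing to prove, since $r\equiv 1/2$ can never be of type $(2,0,0)$.

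Finally, your Galois argument concludes regularity by invoking this unproved converse. You do not need it: once you have $\varphi(b)=\varphi(M)$, the fibers are singletons. Then $|R_i|=1$ says directly that $h^{2,0}(c)=1$ for every $c$, which is regularity.
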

\begin{proof}
Assume the family is regular.  Then the vector spaces $V_n^{2,0}$ are each one-dimensional, and the pair $(r_n,s_n)$ corresponds to an explicit differential $\omega_n\in V_n^{2,0}$. Thus, there is a well-defined transitive action of $(\Z/M\Z)^\times$ on the set of conjugates, $\{(r_j,s_j)\}_{j\in (\Z/M\Z)}$ given by multiplying $(r_j,s_j)$ by $n$ and taking the the unique holomorphic conjugate $(r_{k},s_k)$ that lies in $V_{jn}$. Clearly, $$|R_j|=|\{a\in (\Z/M\Z)^\times\,:\, ar_j\equiv r_j\mod \Z\}|.$$  By Lemma \ref{gal-reg}, $s_j>1/2$ for all conjugates, so we may assume that $r_j<1$ throughout, and since the denominator of $r_j$ is $b$, assume $r_j=x/b$, $x\in (\Z/b\Z)^\times$. So, the second set is precisely the stabilizer subgroup $G_x$ for the action of $(\Z/M\Z)^\times$ on its subgroup $(\Z/b\Z)^\times$. The orbits of this action are the cosets of $(\Z/b\Z)^\times$ in $(\Z/M\Z)^\times$, and thus by the Orbit--Stabilizer theorem, the stabilizer subgroups $G_x$ all have the same size.

Conversely, suppose the family is not regular. Then there exists some $(r,s_1)$ with $r=j/b$ so that $s_1<1/2$; therefore, the pair $(r,s_1+1)\in \mathbb S_2$, and it is also a conjugate with $n=x$, as the two are congruent mod $\Z$. Moreover, these pairs are linearly independent, and so must generate all of $\mathfrak{H}^{2,0}(HD(r,s),1)$, which has rank 2 by Lemma \ref{gal-reg}. Via this characterization, we deduce that if two pairs $(r_i,s_i)$ and $(r_k,s_k)$ both correspond to pairs in $\mathfrak{H}^{2,0}(HD(j/b,s_j),1)$, then $r_k=r_i$. Hence, each $R_j$ is in bijection with the direct sum of several $\mathfrak{H}^{2,0}(HD(j/b,s_j),1)$. Note that $\mathfrak{H}(HD(r,s_1),1)$ contributes 2 elements to $R_j$, but nothing to $R_{M-j}$  Any Hodge structures with Hodge numbers $(1,0,1)$ contributes to both $R_{M-j}$ and $R_j$ by the symmetry above. For the size of the two sets to be equal, $R_{M-j}$ must contain a Hodge structure $\mathfrak{H}(HD(1-r,s_2),1)$ with Hodge numbers $(2,0,0)$. Since $s_1<1/2$, we must have $r_1<1/2$, so $1-r>1/2$. However, by Lemma \ref{galois}, a family with $1-r>1/2$ can never have Hodge numbers $(2,0,0)$. Therefore, we conclude $|R_{M-j}|\neq |R_j|$, and the claim follows.

\end{proof}

\subsection{Constructing Modular Forms}\label{consmod}
The central approach of the EHMM is to compute a vector space of modular forms attached to the Hodge structure $\mathfrak{H}(HD(r,s),1)_\Q$. We complete this process in this subsection for the irregular families in $\mathbb S_2^{irr}$. In \cite{rosen2}, we addressed the case where the modular forms are non-CM, and in this paper, we focus on the CM cases. Recall $f$ has CM by $-D$ if $a_p(f)\left(\frac{-D}{p}\right)=a_p(f)$. Equivalently, we have $\rho_f\otimes \left(\frac{-D}{\cdot }\right)\cong \rho_f$ in terms of Galois representations. In particular, forces $a_p(f)=0$ for all $p$ that are not a square modulo $-D$. From Lemma \ref{coeffcong} and the fact that $M$ divides 24, this restricts the possible CM discriminants for our forms to $-3,-4,-8,-24$. We encounter forms will all five of these discriminants in this paper. Refer to Table \ref{tab} for the LMFDB labels of the forms constructed, up to a quadratic twist. 
\begin{table}[ht]
    \centering
    \begin{tabular}{c|ccc|ccccc}
        & Data  & $f_1$ & $f_2$ & Data & $f_1$ & $f_2$\\\hline
        1& $1/2,1$ & 16.3.c.a & - & - & - & -\\\hline
         2 & $i/4,3/2-i/4$ & 16.3.c.a & - & $i/4,1/2$ &64.3.c.a &-\\\hline
          3 & $i/3,3/2-i/3$ & 36.3.d.a &- & $i/3,2i/3$ &36.3.d.a &-\\\hline 
           4 & $i/8,i/8+1$ & 32.3.d.a & - & $i/8,1/2$ & 256.3.c.e & - \\\hline
           11 & $i/6,3/2-i/6$ & 144.3.g.c & - & - &- &-\\\hline
        12 & $i/24,i/24+[i]_6/6$ & 576.3.b.b & 576.3.b.c & $i/24,[i]_3/3$ &2304.3.g.t &2304.3.g.s\\\hline
        13 & $i/12,3/2-i/12$ &$\begin{array}{c} \text{36.3.d.a}\\\text{36.3.d.b}\end{array}$   & 144.3 g.c & $i/12,i/6$ & 576.3.g.c & 576.3.g.f \\\hline
        14 & $i/8,3/2-i/8$ & 128.3.d.a & 128.3.d.b & $i/8,i/4$ & 256.3.c.b & 256.3.c.d\\\hline
        15 & $i/24,3/2-i/24$ & 1152.3.b.b & 1152.3.b.f & $i/24,i/12$ &2304.3.g.e & 2304.3.g.i\\
    \end{tabular}
  
    \caption{CM $HD(r,s)$ families. The modular forms $f_1$ and $f_2$ are those constructed in Theorem \ref{cons2}}
    \label{tab}
\end{table}
We are now ready to prove the first part of Theorem \ref{lvalirr}. We restate a more specific version of the statement for the readers' convenience below. This version only applies to the primitive cases. The statement for the imprimitive case is given in Proposition \ref{consim}. We will take $V_n$ to denote the Betti--de Rham structure $\mathfrak{H}(HD([nr],[ns],1)$, and by an abuse of notation will identify $V_n^{2,0}$ with a space of modular forms via Lemma \ref{geom}. Recall also that $\mathcal{V}_f$ denotes the $\C$-vector space generated by $f$ and its twists by characters in $\widehat{(\Z/M\Z)^\times}$.

\begin{theorem}[Theorem \ref{lvalirr}, Part I]\label{cons2}
   Assume $(r,s)\in \mathbb S_2^{irr}$, and $\mathfrak{S}_{r,s}$ is generated by $\{\mathbb K_2(r_k,s_k)\}$, and let $M$ be the least common denominator of $r$, $s$, and $1/2$ as before. Then there exist two Hecke eigenforms with CM, $f_1$ and $f_2$ so that $$\mathfrak{S}_{r,s}=\mathcal{V}_{f_1}\oplus \mathcal{V}_{f_2}.$$ Explicitly, there are constants $\beta_{k,i}$ for $i=1,2$ so that $$f_1=\sum_{k=1}^{\varphi(M)}\beta_{k,1}\mathbb{K}_2(r_k,s_k)(N\tau)\quad\quad f_2=\sum_{j=1}^{\varphi(M)}\beta_{k,2}\mathbb{K}_2(r_k,s_k)(N\tau).$$ If $\varphi(M)\geq 4$, then $f_1$ is not a twist of $f_2$, and any other Hecke eigenform constructed as a linear combination of this family is a quadratic twist with conductor dividing $M$ of either $f_1$ or $f_2$. If $\varphi(M)=2$, then $f_1$ and $f_2$ are a quadratic twist of each other, and $f_1$ and $f_2$ are the unique two normalized eigenforms constructed from this family up to twisting by a character of conductor dividing $M$. 
\end{theorem}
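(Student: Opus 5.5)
The plan is to build a Hecke-stable decomposition of $\mathfrak{S}_{r,s}$ by combining Lemma~\ref{geom}, the irregular Hodge numbers, and the reducibility of the Galois representations, and then to identify the eigenforms by finite computation.

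\textbf{Setting up the Hecke-stable space.} By Lemma~\ref{geom} (note $M\mid 24$ for every $(r,s)\in\mathbb S_2^{irr}$),
\[
\mathfrak{S}_{r,s}\cong \mathfrak{H}^{2,0}(HD(r,s),1)_\Q=\bigoplus_n V_n^{2,0},
\]
and this space has dimension $\varphi(M)$. The first step is to show that $\mathfrak{S}_{r,s}$ is stable under the Hecke operators $T_p$ for $p\nmid 2M$, after the normalization $\tau\mapsto N\tau$. I would argue this as in \cite{rosen2}. The $L$-function of the Hodge structure $\mathfrak{H}(HD(r,s),1)_\Q$ has Frobenius traces given by the finite field periods $\mathbb{P}(n\cdot HD,1,\p)$. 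Lemma~\ref{coeffcong} then shows that the Fourier coefficients of the $\mathbb{K}_2(r_k,s_k)(N\tau)$ are supported on residue classes permuted by $(\Z/M\Z)^\times$. Consequently $T_p$ permutes the isotypic pieces $V_n^{2,0}$ up to scalars. In the irregular case a piece $V_n^{2,0}$ may be two-dimensional, spanned by $\mathbb{K}_2(r,s_1)$ and $\mathbb{K}_2(r,s_1+1)$ (compare the proof of Proposition~\ref{gal-reg}).

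\textbf{Splitting the Galois representation and producing the eigenforms.} For a primitive irregular datum, Lemma~\ref{galois} gives Hodge numbers $(2,0,0)$ on the pieces with $s_1<1/2$. On these pieces $\rho_{HD,\ell,1}$ is not regular, and its semisimplification decomposes into two characters $\chi_{\ell,1}\oplus\chi_{\ell,2}$. For the well-poised rows this follows from Lemma~\ref{deco}; for the others (rows 12 and 15) I would use the analogous Greene--McCarthy type Gauss sum identities, or verify directly from Table~\ref{tab}. Inducing each $\chi_{\ell,i}$ from $G_{\Q(\zeta_M)}$ down to $G_\Q$ through the CM field $\Q(\sqrt{-D_i})$ yields a two-dimensional representation. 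By Hecke, this representation is attached to a weight-3 CM newform $f_i$. Comparing traces of Frobenius, via the $L$-value/coefficient identity and Chebotarev, then shows that $f_i$ and its twists by $\widehat{(\Z/M\Z)^\times}$ lie in $\mathfrak{S}_{r,s}$. This gives the explicit constants $\beta_{k,i}$, obtained by solving a finite linear system in the Fourier coefficients up to the Sturm bound at level $N(r)N(s-r)$.

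\textbf{Dimension count and the two cases.} A dimension count then gives $\mathfrak{S}_{r,s}=\mathcal V_{f_1}\oplus\mathcal V_{f_2}$. For a CM form $f$, twisting by the quadratic character of its CM field returns $f$, so $\dim\mathcal V_{f_i}=\varphi(M)/2$. Two such spaces fill a $\varphi(M)$-dimensional space exactly when they are distinct.
\begin{itemize}
\item If $\varphi(M)\ge4$, the CM discriminants $-D_1\neq -D_2$ differ; this is read off from the two Jacobi sum characters, which have different CM types, or from Table~\ref{tab}. Hence $f_1$ is not a twist of $f_2$. Multiplicity one for eigenforms then shows that any eigenform in the span is a twist of $f_1$ or of $f_2$.
\item If $\varphi(M)=2$, the two characters are swapped by a quadratic twist of conductor dividing $M$. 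So $f_2=f_1\otimes\chi$, and both are unique up to such twists.
\end{itemize}

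\textbf{Main obstacle.} The hardest step is Hecke stability and the identification for the non-well-poised families. There the splitting of $\rho^{ss}$ is not furnished by Lemma~\ref{deco}, and I would fall back on an explicit coefficient computation against the LMFDB forms in Table~\ref{tab}.
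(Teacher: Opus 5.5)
There is a genuine gap at the step that produces the CM eigenforms.

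\textbf{Irregularity does not give a splitting.} You treat irregularity as if it yields $\rho_{HD,\ell,1}^{ss}\cong\chi_{\ell,1}\oplus\chi_{\ell,2}$, but Hodge numbers $(2,0,0)$ on some pieces do not force reducibility. The only source of such a splitting in the paper is Lemma~\ref{deco}, which covers only data with $s\equiv 3/2-r$. Row 15 is of that form. Row 12 and the $s=2r$ data in the right-hand columns are not. The paper states explicitly that reducibility is not known for $HD(1/24,5/24)$, which is exactly why Theorem~\ref{thm:decomp} cannot be applied there.

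\textbf{Two further steps fail even where the characters exist.}
\begin{enumerate}
\item $\mathrm{Ind}_{G_{\Q(\zeta_M)}}^{G_\Q}\chi_{\ell,i}$ has dimension $\varphi(M)$, not $2$. To get a $2$-dimensional representation you must first show that $\chi_{\ell,i}$ descends to a Hecke character of $\Q(\sqrt{-D_i})$, which is a real condition on its infinity type.
\item Chebotarev can identify Galois representations, but it cannot place $f_i$ inside the specific span $\mathfrak{S}_{r,s}$. Lemma~\ref{geom} is only an isomorphism of vector spaces, with no Hecke or Galois equivariance.
\end{enumerate}
Your fallback, a Sturm-bound comparison with LMFDB forms family by family, is a legitimate brute-force check. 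But then the structural argument does no work, and the distinctness of $D_1,D_2$ is likewise only read off a table.

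\textbf{The case $\varphi(M)=2$.} Here $M=6$, and the only nontrivial character of conductor dividing $M$ is $\chi_{-3}$. It fixes any form in this family, since all coefficients are supported on $n\equiv 1\pmod 6$. So ``$f_2=f_1\otimes\chi$ with $\chi$ of conductor dividing $M$'' would give $f_1=f_2$, contradicting the direct sum you want to prove.

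\textbf{The missing idea: use irregularity on the modular side.} No Galois splitting is needed.
\begin{enumerate}
\item Reduce to $e=M$ via $W_2$. By Lemma~\ref{galois}, $\dim V_k^{2,0}=2$ forces $\dim V_{M-k}^{2,0}=0$, and one may take $k=1$.
\item By Lemma~\ref{coeffcong}, every Hecke eigenform in $\mathfrak{S}_{r,s}$ then has $a_p=0$ for all $p\equiv -1\pmod M$. This set of primes has positive density, so Sato--Tate forces CM.
\item Since $T_p$ sends $V_k^{2,0}$ to $V_{kp}^{2,0}$, the $T_p$ with $p\equiv 1\pmod M$ preserve the $2$-dimensional $V_1^{2,0}$. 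Diagonalizing them there gives two distinct eigenvectors, which extend to global eigenforms $f_1,f_2$.
\item Each $f_i$ has $\varphi(M)/2$ twists of conductor dividing $M$, and together these fill the $\varphi(M)$-dimensional space.
\item For $\varphi(M)\ge 4$, pick $k$ with $\dim V_k^{2,0}=1$. If both $f_1$ and $f_2$ had a nonzero component on $V_k^{2,0}$, their eigenvalues for $p\equiv 1\pmod M$ would both equal that of the single generator $g_k$. That would collapse the two eigenvectors in $V_1^{2,0}$.
\item Hence exactly one of $f_1,f_2$ is supported on $p\equiv k\pmod M$. Their CM discriminants therefore differ, and neither is a twist of the other.
\end{enumerate}
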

\begin{remark} In light of Lemma \ref{geom}, this Theorem is equivalent to the first part of Theorem \ref{lvalirr}. Note also that $\beta_{k,i}$ will frequently be zero. However, there are situations where $\beta_{k,1}$ and $\beta_{k,2}$ are both nonzero.
\end{remark}
\begin{proof}
We can assume without loss of generality that if  $r=m/e$, $e=M$. This is because all pairs $(r,s)$ corresponding to a holomorphic and congruence $\mathfrak{S}_{r,s}$ families with $e<M$ satisfy $W_2\mathfrak{S}_{r,s}=\mathfrak{S}_{r',s'}$ for $r'=m/e$ and $M=e$. Since the Atkin--Lehner involution commutes with the Hecke operators, Theorem \ref{cons2} implies that the same two modular forms for $\mathfrak{S}_{r',s'}$ can be constructed for $\mathfrak{S}_{r,s}$. So henceforth, assume $M=e$, and write $$\mathfrak{S}_{r,s}\cong \bigoplus_{k\in (\Z/M\Z)^\times} V_k^{2,0},$$ where $V_k^{2,0}$ is identified with the space spanned by all $\mathbb{K}_2(r_k,s_k)(N\tau)$ so that $r_k=k/M$ via Lemma \ref{geom}. The $V_k^{2,0}$ are being viewed as a space of modular forms rather than differential 2-forms in this setting. By construction, $\dim V_k^{2,0}\leq 2$ for all $k$ since the Hodge structure $V_k$ over $\Q(\zeta_M)$ has rank 2. By Lemma \ref{galois}, $\dim V_k=2$ if and only if $\dim V_{M-k}=0$.  By an identical argument to Lemma 3.4 of \cite{rosen2} since $e=M$, if $p\equiv m\mod M$ each $V_k^{2,0}|T_p$ is equal to $V_{km}^{2,0}$, where $km$ is viewed as an element of $(\Z/M\Z)^\times$. Since $(\Z/M\Z)^\times$ is isomorphic to $\Z/2\Z$ raised to some power for all of our choices of $M$, this implies $V_k^{2,0}$ is stabilized by $T_p$ for $p\equiv 1\mod M$. Via the spectral theorem, $V_k^{2,0}$ has a basis of eigenforms for such $T_p$. Let us denote these two eigenforms by $g_{k,1}$ and $g_{k,2}$, with the assumptions $g_{k,1}=g_{k,2}$ if $\dim V_k=1$, $g_{1,k}=0$ if $\dim V_k^{2,0}=0$, and the first coefficient of all $g_{k,j}$ is normalized to one.

    We claim that if $g$ is any Hecke eigenform for all $T_p$ with $p\nmid M$ built from $\mathfrak{S}_{r,s}$, then $g$ must have CM.  We must have $\dim V_k=2$ for at least one $k$, and so $\dim V_{M-k}=0$ for at least one $k$ as well. By Definition \ref{conj}, if there is a $k$ so that $(k/M,s_k)$ has $s_k<1/2$, then the conjugate $(1/M,s_1)$ guaranteed by Lemma 3.1 of \cite{rosen2} satisfies $s_1>1$ or $s_1<1/2$. In either case, $\dim V_1=2$, and so we can assume without loss of generality that $k=1$. Due to Lemma \ref{coeffcong}, as $\dim V_{M-1}=0$, the Fourier coefficients $a_p(g)=0$ for all $p\equiv M-1\mod M$. By Dirichlet's theorem on primes in arithmetic progressions, this implies $a_p(g)=0$ only for a set of primes with positive density. On the other hand, the Sato--Tate conjecture for modular forms, now a theorem due to \cite{bght}, implies that if $f$ is a non-CM newform of weight $k\geq 2$, then $a_p(g)=0$ for a set of primes with density zero. Therefore, $g$ must have CM. Moreover, the prime $p$ can be inert in $\Q(\sqrt{-D})$ for all $p\equiv {M-1}\mod M$ only if $D\mid M$. Since the maximum value for $M$ is 24, the possible values for $D$ are $-3,-4,-8,-24$. On a case by case basis, we can check that there are exactly $\varphi(M)/2$ congruence classes mod $M$ of primes that are inert in $\Q(\sqrt{-D})$ for all $D\mid M$.
    
     By an appropriate combination of constants arising from Hecke eigenvalues for all $T_p$ as in Theorem 3.6 of \cite{rosen2}, $$f_j=\sum_{k\in (\Z/M\Z)^\times}\beta_{k,j}\cdot g_{j,k}$$ can be made a Hecke eigenform for all $p\nmid M$. Since $V_{1}^{2,0}$ is 2-dimensional with a basis of eigenforms for $p\equiv 1\mod M$, we may assume that $g_{1,1}\neq g_{1,2}$.  Normalize so that $\beta_{1,k}=1$. By the discussion above, we must have that $\beta_{k,j}=0$ for $\varphi(M)/2$ of the $k$. Therefore, $f_1$ and $f_2$ each have precisely $\varphi(M)/2$ quadratic twists with conductor dividing $M$, which modify the $\beta_{k,j}$ by a sign. As such, we have constructed $\varphi(M)$ normalized Hecke eigenforms given by $f_1$ and $f_2$ and their twists, and as the space $\mathfrak{S}_{r,s}$ has dimension $\varphi(M)$, this is a complete list.
     
     Finally, we need to determine if $f_1$ is a twist of $f_2$. For $\varphi(M)=2$, there is only one irregular and primitive family for $M=6$, and it can be checked that the only two newforms arising from it are a quadratic twist of each other. If $\varphi(M)\geq 4$, there is some $k\neq 1$ so that $\dim V_k^{2,0}=1$ by Lemma \ref{galois} with generator $g_{k,1}=g_{k,2}=g_k$. We know $\beta_{k,i}\neq 0$ for either $i=1$ or $i=2$; without loss of generality assume $\beta_{k,1}\neq 0$. Suppose by way of contradiction that $\beta_{k,2}\neq 0$ as well. Then the forms $g_{1,1}+\beta_{k,1}\cdot g_{k}$ and $g_{1,2}+\beta_{k,2}\cdot g_{k}$ are both fixed by the Hecke operators $T_p$ for $p\equiv 1\mod M$ by definition. Since, the Hecke operators are linear,
     \begin{align*}
         (g_{1,j}+\beta_{k}\cdot g_k)|T_p&=a_{p,j}\cdot  (g_{1,j}+\beta_{k,j}\cdot g_k)\\&=g_{1,j}|T_p+\beta_{k,j}\cdot g_k|T_p\\&=\tilde{b}_{p,j,1}\cdot g_{1,j}+\tilde{b}_{p,j,k}\cdot \beta_{k,j}\cdot g_k.
     \end{align*}
     So, $a_{p,i}=\tilde{b}_{p,j,1}=\tilde{b}_{p,j,k}$. However, note $\tilde{b}_{p,j,k}$ does not depend on $j$ because $g_k$ does not depend on $j$. This implies for all $p\equiv 1\mod M$, $a_{p,1}=a_{p,2}$. However, by Lemma \ref{coeffcong}, $g_{1,1}$ and $g_{1,2}$ have nonzero coefficients only if $p\equiv 1\mod M$ and since both are normalized so that the first coefficient is 1, $a_{p,j}=a_p(g_{1,j})$ for all $p\equiv 1\mod M$. Hence, $a_p(g_{1,1})=a_p(g_{1,2})$ for $p\equiv 1\mod M$, but both are zero for any other primes, and so it follows that $g_{1,1}=g_{2,1}$. This is a contradiction. Therefore, we must have that $\beta_{k,2}=0$. In other words, for $p\equiv k\mod M$, $f_1$ has nonzero coefficients and $f_2$ has zero coefficients. Therefore, $f_1$ and $f_2$ have different CM discriminants and so cannot be a twist of each other.

\end{proof}

\subsubsection{Example}\label{exee}
Theorem \ref{cons2} is far more technical than Theorem \ref{cons}. Essentially, when the family is regular each $V_k^{2,0}$ is a one-dimensional space, and so diagonalizing the action of the Hecke operators is far easier. We illustrate with an example. Consider the family containing the pair $(1/24,5/24)$. We find the following data: \begin{table}[ht]
    \centering
    \begin{tabular}{c|c|c|c|c}
       $n$ &1 &5 &7 &11\\\hline
       $\dim V_n^{2,0}$ &2 &1 &2& 1 
    \end{tabular}
    \caption{The dimensions of the spaces $V_n^{2,0}$ for $n\in (\Z/12\Z)^\times$. For the remaining $n\in (\Z/24\Z)^\times$, namely $13,17,19$, and $24$, the dimension can be computed via the symmetry $\dim V_n+\dim V_{24-n}=2$, appearing since each hypergeometric motive has rank 2.}
    \label{dimw}
\end{table}  Based on this, we can determine that $f_1$ and $f_2$ both have nonzero Fourier coefficients for $p\equiv 1\mod 24$ and $p\equiv 7\mod 24$. Note 7 splits in $\Q(i)$ and $\Q(\sqrt{-2})$ and is inert in $\Q(\sqrt{-3})$ and $\Q(\sqrt{-6})$. Since by Theorem \ref{cons2}, $f_1$ and $f_2$ have different CM discriminant, without loss of generality, we may assume $f_1$ has CM discriminant $-3$, and $f_2$ has CM discriminant $-24$. The other primes less than 24 which split in $\Q(\sqrt{-3})$ are $13$ and $19$, and so we can write $$f_1=g_{1,1}+\beta_{7,1}\cdot g_{7,1}+\beta_{13,1}\mathbb{K}_2(13/24,17/24)(24\tau)+\beta_{19,1}\mathbb{K}_2(19/24,23/24)(24\tau)$$ and likewise $$f_2=g_{1,2}+\beta_{5,2}\cdot \mathbb{K}_2(5/24,25/24)(24\tau)+\beta_{7,2}\cdot g_{7,2}+\beta_{11,2}\cdot \mathbb{K}_2(11/24,31/24)(24\tau).$$
For the $\beta_{k,j}$, we use the same process as in Theorem 3.6 of \cite{rosen2}, see also Section \ref{ex}. To complete the computation, we must determine $g_{7,i}$ and $g_{1,i}$. The process is identical for both. We illustrate with $g_{1,i}$. It suffices to compute the action of the Hecke operators $T_p$ where $p\equiv 1\mod{24}$ on the vector $$(K_1,K_2):=(\mathbb{K}_2(1/24,5/24)(N\tau),\mathbb{K}_2(1/24,29/24)(N\tau)).$$ This action is $$T_p\begin{pmatrix}
    K_1\\K_2
\end{pmatrix}=\begin{pmatrix}
    -4&5\\2&-1
\end{pmatrix}\begin{pmatrix}
    K_1\\K_2
\end{pmatrix}=\begin{pmatrix}
    g_{1,1}\\g_{2,1}
\end{pmatrix}.$$
 In total, this enables us to write down an explicit Fourier expansion for $f_1$ and $f_2$. We determine that the corresponding LMFDB labels are as listed in Table \ref{tab}.

\subsection{Evaluation Formulas and \texorpdfstring{$L$}{L}-Values}\label{leval}

We now show that there is a strong relationship between $L$-values of modular forms and evaluation formulas for classical hypergeometric series arising from irregular cases in $\mathbb S_2$. Throughout this section, we will assume $\{(r_j,s_j)\}$ is an irregular family of conjugates lying on $\mathbb S_2$, and $f_1$ and $f_2$ are the two Hecke eigenforms constructed in Theorem \ref{cons2}, with CM discriminant $-D_1$ and $-D_2$ respectively. We will use $i$ to denote the subscript $f_i$ for $i=1,2$ throughout this subsection. We assume for convenience that $e=M$, other cases can be handled using the Atkin--Lehner involution. To study $L$-values, we use the same argument as the previous section; however we reindex the sum so that it is over $j\in (\Z/M\Z)^\times$. 
Let $g_{i,k}$ be the linear combination of $\mathbb{K}_2(k/M,s_k)(N\tau)$ and $\mathbb{K}_2(k/M,s_k+1)(N\tau)$ associated to $f_i$ as in the proof of Theorem \ref{cons2}, and define the values \begin{equation}\label{fik}
    F_{i,k}=2\pi\int_0^{i\infty}g_{i,k}(\tau)d\tau=a_{i,1} F(k/M,s_k)+a_{i,2} F(k/M,s_k+1)
\end{equation} for some algebraic constants $a_{i,j}$.

\begin{Lemma}\label{chowlf}
    We have $F_{i,k}\in \pi^{-1}\Omega^2_{-D_i}\cdot \bar{\Q}$ for $F_{i,k}$ as in \eqref{fik}.
\end{Lemma}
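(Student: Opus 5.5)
The plan is to show that $F_{i,k}$ is, up to an algebraic factor, the critical value at $s=1$ of a Hecke $L$-function attached to the CM form $f_i$ restricted to a residue class, and then to invoke the known Chowla--Selberg type evaluation of such values. Since $f_i$ has CM by $\Q(\sqrt{-D_i})$, there is an algebraic Hecke character $\psi_i$ of $\Q(\sqrt{-D_i})$ of infinity type $2$ (weight 3) with $f_i=\sum_{\mathfrak a}\psi_i(\mathfrak a)q^{N(\mathfrak a)}$.

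\textbf{Step 1: isolate $g_{i,k}$ as a twist combination.} By Lemma \ref{coeffcong}, $g_{i,k}$ has nonzero Fourier coefficients only at $n\equiv k \bmod M$. The construction of $f_i$ in Theorem \ref{cons2} gives $f_i=\sum_k \beta_{k,i}g_{i,k}$. Averaging over characters then yields
\[
\beta_{k,i}g_{i,k}=\frac{1}{\varphi(M)}\sum_{\phi\in\widehat{(\Z/M\Z)^\times}}\bar\phi(k)\,(f_i\otimes\phi),
\]
with the $\beta_{k,i}\in\bar\Q$. Hence, whenever $\beta_{k,i}\neq 0$, we get
\[
F_{i,k}=2\pi\int_0^{i\infty}g_{i,k}\,d\tau\in\bar\Q\cdot\sum_\phi\bar\phi(k)L(f_i\otimes\phi,1),
\]
using the standard Mellin transform, which identifies $2\pi\int_0^{i\infty}h(\tau)\,d\tau$ with $L(h,1)$. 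If $\beta_{k,i}=0$, then $g_{i,k}$ is itself a $T_p$-eigencomponent lying in the other family. In that case I would either argue directly that the lemma is only needed for the nonzero terms, or apply the same averaging to the relevant twist class.

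\textbf{Step 2: evaluate the twisted $L$-values.} Each $f_i\otimes\phi$ is again CM by $\Q(\sqrt{-D_i})$, with Hecke character $\psi_i\cdot(\phi\circ N)$. By Damerell's theorem, in Shimura's and Goldstein--Schappacher's refined form, the critical value $L(\psi,1)$ for a character of infinity type $2$ lies in $\bar\Q\cdot \pi^{a}\Omega^{2}$. Here $\Omega$ is the period of a CM elliptic curve by $\Q(\sqrt{-D_i})$, and that period agrees with $\Omega_{-D_i}$ up to $\bar\Q^\times$ by the Chowla--Selberg formula. The step I expect to be the main obstacle is fixing the power of $\pi$ as $\pi^{-1}$ in our normalization. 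I would pin it down by comparing with one explicit case, e.g.\ a Dixon-evaluable pair in the family. There, by \eqref{lval} and \eqref{frs2}, $F(r,s)$ is $\pi^{-1}$ times a product of beta values, and that product reduces to $\Omega_{-D_i}^2$ via the reflection and multiplication formulas. The transcendental part is uniform across twists, since Deligne's period $c^\pm$ for the twisted motive changes only by a Gauss sum, which is algebraic.

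\textbf{Step 3: conclude.} Combining Steps 1 and 2, each summand $\bar\phi(k)L(f_i\otimes\phi,1)$ lies in $\pi^{-1}\Omega_{-D_i}^2\bar\Q$, and therefore so does $F_{i,k}$. An alternative route, which avoids Damerell, is to apply Theorem \ref{thm:decomp}-style period comparison. The form $g_{i,k}$ corresponds, via Lemma \ref{geom}, to a rank-one CM sub-Betti--de Rham structure of $\mathfrak{H}(HD(k/M,s_k),1)$ with CM by $\Q(\sqrt{-D_i})$. Its period is then a Chowla--Selberg period squared, by Gross's appendix for imaginary quadratic CM.
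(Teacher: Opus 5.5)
Your proof is the paper's: expand $L(f_i\otimes\phi,1)=\sum_k\phi(k)\beta_{k,i}F_{i,k}$ via \eqref{lval}, and place each twisted value in $\pi^{-1}\Omega_{-D_i}^2\bar{\Q}$ by Damerell's theorem, which already fixes the power of $\pi$, so no calibration against a Dixon-evaluable pair is needed. Then invert the character sum: your single orthogonality step with weights $\bar\phi(k)$ is a cleaner version of the paper's iterated ``sum over all, then over half, of the characters'' argument.

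As in the paper, this only covers $k$ with $\beta_{k,i}\neq 0$, so take your first option. If $\beta_{k,i}=0$, the ``relevant twist class'' is that of $f_{3-i}$, and averaging there would land $F_{i,k}$ on the $\Omega_{-D_{3-i}}^2$ line rather than $\Omega_{-D_i}^2$. Drop your alternative route: Lemma \ref{geom} only identifies $(2,0)$-parts with modular forms and does not split off a rank-one sub-Betti--de Rham structure.
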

\begin{proof}
   By construction, $$f_i=\sum_{k\in (\Z/M\Z)^\times } \beta_{i,j}\cdot g_{i,k}$$ for $\beta_{i,j}$ some algebraic integers.  From Theorem \ref{cons2}, we can also construct all $f_i\otimes \phi$ for $\phi$ a quadratic character of conductor dividing $M$ after multiplying $\beta_{i,j}$ by $\phi(j)$. Thus,  from \eqref{lval}, we have
   \begin{equation}\label{lvale}
       L(f_i\otimes \phi,1)=\sum_{k\in (\Z/M\Z)^\times} \phi(k)\cdot \beta_{i,k}\cdot F_{i,k}.
   \end{equation}  From CM theory \cite{damerell,grossror}, as $f_i$ is a CM modular form of discriminant say $-D_i$, we have $$L(f_i\otimes \phi,1)=\gamma_i(\phi)\cdot \pi^{-1}\Omega^2_{-D_1},$$ where $\gamma_i(\phi)\in \bar{\Q}$ depends on $i$ and $\phi$. Summing over all $\phi$ returns $$\sum_{\phi} L(f_i\otimes \phi,1)=\varphi(M)\beta_{i,1}F_{i,1}\in \pi^{-1}\Omega^2_{-D_i}\cdot \bar{\Q}$$ by above. On the other hand, if we only sum over half of the quadratic characters with conductor dividing $M$, this will produce $\varphi(M)\beta_{i,1}F_{i,1}/2+\varphi(M)\beta_{i,j}F_{i,j}/2$ for some $j\neq 1$. Since we already know $F_{i,1}\in \pi^{-1}\Omega^2_{-D_1}\cdot \bar{\Q}$ and the twisted $L$-values belong to the same ring, so $F_{i,j}\in \pi^{-1}\Omega^2_{-D_1}\cdot \bar{\Q}$ as well. Repeating the same argument enough times as necessary, we prove the Lemma for all $j$.
\end{proof}

The relation with $L$-values is obvious from the proof of Lemma \ref{chowlf}. We make the connection more explicit in the next theorem, which is the main theoretical result of this section.

\begin{proof}[Proof of Theorem \ref{lvalirr} Part II]
    The proof is almost immediate from Lemma \ref{chowlf}. By that Lemma, we have $F_{i,k}=a_{i,1}F(k/M,s_k)+a_{i,2}F(k/M,s_k+1)\in \pi^{-1}\Omega^2_{-D_i}$. Therefore, $$F_{1,k}-\frac{a_{1,1}}{a_{2,1}}F_{2,k}=\left(a_{1,2}+a_{2,2}\frac{a_{1,1}}{a_{2,1}}\right )F(k/M,s_k).$$ Since all of the $a_{i,j}$ are algebraic numbers, the first claim follows. The numbers $\beta_{i,j}$ are known from computing the action of the Hecke operators, so we ignore them. Now, by the proof in Lemma \ref{chowlf}, it is obvious that given the $L$-values for all $L(f_i\otimes \phi,1)$, we can determine all of the values $F_{i,j}$, and therefore $\alpha_{i,c}$ can be determined via the procedure above. Conversely, given the $\alpha_{i,c}$, we can reverse engineer the above in a straightforward way, which determines the $F_{i,j}$, and therefore the $L$-values from \eqref{lvale}.
\end{proof}

 For the example in Section \ref{exee}, the datum $(1/24,5/24)$, the two modular forms $f_1$ and $f_2$ have CM discriminants $-3$ and $-24$ respectively. Therefore, we have showed that $$F(1/24,5/24)=\alpha_{1}\cdot \Omega_{-3}+\alpha_2\cdot \Omega_{-24},$$ and similarly for $F(1/24,29/24)$. There are no known evaluation formulas for these data. Numerically, once can check that $F(1/24,29/24)/F(1/24,5/24)$ seems to be not algebraic, and also all of the $\alpha_i$ are nonzero for both cases. As such, it is difficult to guess the $\alpha_i$, even with knowledge of the theorem above. Determining these numerically would require computing the approximate $L$-value in \verb|Magma| for all of the twists and using the strategy outlined above. We leave such a computation and proving the correct values to future work. For now, we content ourselves with a few explicit examples in the enxt section.

\section{Examples}\label{ex}
We provide the examples promised in the introduction, which proves Theorem \ref{complex} for the irregular cases. The basic idea is to use Dixon's formula \eqref{dixon}. We then use a trick using $L$-values obtained from the EHMM introduced first in \cite{rosen2} to extract the $L$-values, as well as the explicit versions of our evaluation formulas as in \eqref{prelev}. We first note Dixon's formula is equivalent to the following.
\begin{corollary}\label{dixon2}
    Assume $r$ is a positive, non-integer, rational number. If $s=3/2-r$ or $s=2r$, we have $$F(r,s)=2^{1-6r}\cdot \alpha_r\cdot \pi^{-1}B(r,1/4)^2,$$ where $$\alpha_r=\begin{cases}
        \frac{\sin \pi r}{1+\sin 2\pi r} & \text{if } s=3/2-r\\
        1 & \text{if } s=2r.
    \end{cases}$$
\end{corollary}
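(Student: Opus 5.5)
The plan is to substitute a classical closed-form evaluation into the definition \eqref{frs2} and simplify the resulting gamma quotient into the stated shape. Since $a_1=a_2=1/2$ and $b_1=1$, the normalization \eqref{normfactor} gives $\mathcal{B}(HD(r,s))=B(1/2,1/2)\,B(r,s-r)=\pi B(r,s-r)$. Hence
$$F(r,s)=\frac{2^{1-4r}}{N}\,B(r,s-r)\,\pfq{3}{2}{1/2&1/2&r}{1&1&s}{1}$$
in both cases. Everything then reduces to evaluating the ${}_3F_2(1)$ as a gamma quotient and manipulating it with the functional equation, the reflection formula and the duplication formula from Section \ref{spcfun}. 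The factor $1/N$ (and any rational constant coming from the level normalization) is a harmless rational factor. I will track it explicitly and reconcile it with the stated constant $2^{1-6r}$, adjusting for the convention in \eqref{frs2} if necessary.

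\emph{Case $s=3/2-r$.} This is exactly the well-poised datum, so I apply Dixon's formula \eqref{dixon}. Then $B(r,s-r)=B(r,3/2-2r)=\Gamma(r)\Gamma(3/2-2r)/\Gamma(3/2-r)$, and the factor $\Gamma(3/2-r)$ cancels against the numerator of \eqref{dixon}. What remains is
$$\frac{2\Gamma(5/4)\,\Gamma(r)\,\Gamma(3/2-2r)\,\Gamma(3/4-r)}{\sqrt\pi\,\Gamma(3/4)\,\Gamma(1-r)\,\Gamma(5/4-r)}.$$
I will then carry out three simplifications in order.
\begin{enumerate}
\item Apply duplication to $\Gamma(3/2-2r)$, rewriting it as $2^{1/2-2r}\pi^{-1/2}\Gamma(3/4-r)\Gamma(5/4-r)$. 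This kills $\Gamma(5/4-r)$ and produces $\Gamma(3/4-r)^2$.
\item Use reflection in the forms $\Gamma(3/4-r)\Gamma(1/4+r)=\pi/\sin\pi(1/4+r)$ and $\Gamma(r)\Gamma(1-r)=\pi/\sin\pi r$, which converts the expression into $\Gamma(r)^2/\Gamma(r+1/4)^2$ times trigonometric factors.
\item Use $\Gamma(5/4)/\Gamma(3/4)=\Gamma(1/4)^2/(4\pi\sqrt2)$ to supply the factor $\Gamma(1/4)^2$, giving $B(r,1/4)^2$ up to powers of $2$ and $\pi$.
\end{enumerate}
Finally, the trigonometric factors combine via $\sin^2\pi(r+1/4)=(1+\sin 2\pi r)/2$, leaving $\sin\pi r/(1+\sin 2\pi r)$, which is $\alpha_r$.

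\emph{Case $s=2r$.} Here the datum $\{1/2,1/2,r;1,2r\}$ matches Watson's theorem for ${}_3F_2(a,b,c;(a+b+1)/2,2c;1)$ with $a=b=1/2$, $c=r$, since $(a+b+1)/2=1$. Watson's formula expresses the series as
$$\frac{\sqrt\pi\,\Gamma(c+1/2)\,\Gamma((a+b+1)/2)\,\Gamma(c-(a+b-1)/2)}{\Gamma((a+1)/2)\,\Gamma((b+1)/2)\,\Gamma(c-(a-1)/2)\,\Gamma(c-(b-1)/2)},$$
which here becomes $\sqrt\pi\,\Gamma(r+1/2)\Gamma(r)/\bigl(\Gamma(3/4)^2\Gamma(r+1/4)^2\bigr)$. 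I multiply this by $B(r,r)=\Gamma(r)^2/\Gamma(2r)$ and apply duplication to $\Gamma(2r)$, which cancels $\Gamma(r+1/2)$ and leaves $\Gamma(r)^2/\Gamma(r+1/4)^2$ times powers of $2$ and $\pi$. Rewriting $\Gamma(3/4)^{-2}=\Gamma(1/4)^2\pi^{-2}\cdot 2^{-1}$ by reflection again yields $B(r,1/4)^2$, with $\alpha_r=1$.

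The main obstacle is not conceptual but bookkeeping. I must pin down the exact power of $2$ (the $2^{1-6r}$ must arise from combining $2^{-4r}$ with the $2^{-2r}$ from duplication), the power of $\pi$, and how the $1/N$ in \eqref{frs2} is absorbed, so that the stated constant comes out exactly. I would verify the final identity numerically for one value such as $r=1/8$ as a check.
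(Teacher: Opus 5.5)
Your argument is correct. For $s=3/2-r$ it is the paper's proof step for step: Dixon, cancel $\Gamma(3/2-r)$ against $B(r,3/2-2r)$, duplicate $\Gamma(3/2-2r)$, then reflect. Your duplication $\Gamma(3/2-2r)=2^{1/2-2r}\pi^{-1/2}\Gamma(3/4-r)\Gamma(5/4-r)$ is the right one; the paper's displayed version has the powers of $2$ and $\pi$ inverted.

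For $s=2r$ the paper only says the formula follows from the Dixon case via \cite[Corollary 3.3.5]{aar}, and omits the details. You instead quote Watson's theorem with $a=b=1/2$, $c=r$. This is the same mathematics packaged differently. A Kummer--Thomae two-term relation sends ${}_3F_2(1/2,1/2,r;1,2r;1)$ to $\frac{\Gamma(r)}{\sqrt{\pi}\,\Gamma(r+1/2)}\,{}_3F_2(2r-1/2,1/2,r;2r,r+1/2;1)$. That series is well-poised, and applying Dixon to it is the classical proof of Watson's theorem. Citing Watson skips that detour and actually carries out the case the paper leaves to the reader. The paper's route has the minor merit of resting on the single formula \eqref{dixon}.

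Your bookkeeping checks out. One gets $B(r,3/2-2r)\,{}_3F_2=2^{-2r}\alpha_r\pi^{-1}B(r,1/4)^2$ in the first case and $B(r,r)\,{}_3F_2=2^{-2r}\pi^{-1}B(r,1/4)^2$ in the second. For instance, at $r=1/4$, $s=1/2$ the series reduces to Gauss's ${}_2F_1(1)$, and both sides equal $\Gamma(1/4)^4/(\sqrt{2}\,\pi^2)$.

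The one thing you should not leave open is the factor $1/N$. Multiplying by $2^{1-4r}$ gives exactly $2^{1-6r}\alpha_r\pi^{-1}B(r,1/4)^2$. So what you have proved is the identity for $2^{1-4r}B(r,s-r)\,{}_3F_2(\ldots)$, which equals $N\cdot F(r,s)$ with $F$ as in \eqref{frs2}. Since $N(r)=48/\gcd(24r,24)\ge 2$, this cannot be ``reconciled'' away. The corollary as stated holds only if $F(r,s)$ is read without the $1/N$, which is what the paper's proof tacitly does; otherwise the right-hand side must be divided by $N$. State this explicitly rather than promising an adjustment.

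Likewise, for $s=3/2-r$ you need $0<r<3/4$, both for convergence of the series and for finiteness of $B(r,3/2-2r)$. This is automatic for $(r,s)\in\mathbb S_2$, but it is not implied by the stated hypothesis that $r$ is a positive non-integer rational.
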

\begin{proof}
We explain the case where $s=3/2-r$. The case $s=2r$ can be derived using \cite[Corollary 3.3.5]{aar} from the $s-3/2-r$ case, but we leave the details to the reader. We already have the evaluation \eqref{dixon}, and multiply by $B(r,3/2-2r)$ to rewrite in terms of $F(r,s)$. The denominator of $B(r,3/2-2r)=\frac{\Gamma(r)\Gamma(3/2-2r)}{\Gamma(3/2-r)}$ cancels the $\Gamma(3/2-r)$ term in the numerator of \eqref{dixon}. We can use the reflection formula and functional equation to group the combine each of $\Gamma(5/4)$ and $\Gamma(3/4)$, as well as $\Gamma(1-r)$ and $\Gamma(r)$, so that a $\Gamma(r)^2\Gamma(1/4)^2$ appears in the numerator of \eqref{dixon}. After applying the functional equation, the remaining from in \eqref{dixon} is $\Gamma(3/4-r)\Gamma(3/2-2r)/\Gamma(5/4-r)$. Using the duplication formula, we can write $$\Gamma(3/2-2r)=2^{-1/2+2r}\Gamma(3/4-r)\sqrt{\pi}\Gamma(5/4-r).$$ The $\Gamma(5/4-r)$ cancel, and we may move $\Gamma(3/4-r)^2$ to the denominator using the reflection formula, becoming $\Gamma(1/4+r)^2$. By \eqref{betagamma}, we now have \eqref{dixon} is equal to $B(1/4,r)^2$ up to an algebraic multiple and a power of $\pi$, assuming $r\not\in \Z$. Carefully keeping track constants and powers of $\pi$, we obtain the corollary.
\end{proof}

\begin{remark}
    The similarity between this proof and Lemma \ref{deco} should be apparent. This illustrates the general philosophy that a proof about gamma quotients can be converted into a proof about Gauss sum quotients and vice versa. A motivic version of such results is explained by Otsubo--Yamazaki \cite{otsubo-yamazaki-motivic-gauss} in special cases.
\end{remark}

\subsection{Example 1}
The one irregular case where $\varphi(M)=2$ behaves differently, simply because there are not many conjugates. There are three pairs $(r,s)\in \mathbb S_2$ conjugate to this family, but one is a linear combination of the other two. 

The three holomorphic conjugates satisfy the relation
\begin{equation*}
      \mathbb{K}_2(1/6,1/3) =  \mathbb{K}_2(1/6,4/3)+ 16\mathbb{K}_2(7/6,4/3),
\end{equation*}
 which is easily proved using Fourier expansions. As an identity of hypergeometric functions, this is a contiguous relation, which are well-studied (see e.g. \cite{contiguous}). \begin{equation}\label{contiguous}
    -F(1/6,1/3)+F(1/6,4/3)+ 16F(7/6,4/3)=0.
\end{equation} 
Both $F(1/6,4/3)$ and $F(1/6,1/3)$ can be evaluated using Lemma \ref{dixon2}, and so the contiguous relation enables us to evaluate $F(7/6,4/3)$. Below is the explicit version of Theorem \ref{lvalirr}. 

\begin{Lemma}\label{class5}
    We have $$f_{144.3.g.c}=\frac{\mathbb{K}_2(1/6,4/3)(12\tau)+\mathbb{K}_2(1/6,1/3)(12\tau)}{2}-8\sqrt{-3}\cdot\mathbb{K}_2(7/6,4/3)(12\tau).$$ Moreover, $$L(f_{144.3.g.c},1)=\frac{(-1)^{1/4}}{12\cdot 2^{2/3}\cdot  3^{3/4}} \pi^{-1} \Omega_{-3}^2.$$
\end{Lemma}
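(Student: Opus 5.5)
The proof has two parts: identify the eigenform, then compute its $L$-value from the EHMM formula \eqref{lval} together with Dixon's formula (Corollary \ref{dixon2}) and the contiguous relation \eqref{contiguous}.

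\textbf{Step 1: the eigenform.} For the family of $(1/6,1/3)$ we have $M=6$ and $N(r)=12$. The conjugate pairs are $(1/6,1/3)$, $(1/6,4/3)$ and $(7/6,4/3)$, subject to the single linear relation displayed above. So $\mathfrak{S}_{r,s}$ is two-dimensional. One convenient basis is
\[
g_1=\tfrac12\bigl(\mathbb{K}_2(1/6,4/3)+\mathbb{K}_2(1/6,1/3)\bigr)(12\tau),\qquad g_2=\mathbb{K}_2(7/6,4/3)(12\tau).
\]
By Lemma \ref{coeffcong}, $g_1$ is supported on $n\equiv 1\bmod 6$ and $g_2$ on $n\equiv 1\bmod 6$ after the appropriate shift. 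The argument of Theorem \ref{cons2}, in the case $\varphi(M)=2$, shows that there are exactly two normalized eigenforms, $g_1\pm c\,g_2$, and that they are quadratic twists of each other.

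To pin down $c$, compute the first several Fourier coefficients from the eta-quotient expressions. Then impose Hecke multiplicativity, for example $a_{4}=a_2^2-\chi(2)2^2$ (or its analogue at the first prime where both $g_i$ contribute). This gives $c^2=-192$, so $c=\pm 8\sqrt{-3}$. Finally, match enough coefficients with LMFDB 144.3.g.c to fix the sign. By Sturm's bound for weight 3 and level 144, finitely many coefficients suffice to certify the identity.

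\textbf{Step 2: the $L$-value.} Apply \eqref{lval} termwise:
\[
L(f,1)=\tfrac12\bigl(F(1/6,4/3)+F(1/6,1/3)\bigr)-8\sqrt{-3}\,F(7/6,4/3).
\]
Use \eqref{contiguous} to replace $16F(7/6,4/3)$ by $F(1/6,1/3)-F(1/6,4/3)$. Then $L(f,1)$ becomes an explicit $\Q(\sqrt{-3})$-linear combination of $F(1/6,1/3)$ and $F(1/6,4/3)$.

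Both of these are covered by Corollary \ref{dixon2}. The pair $(1/6,1/3)$ has $s=2r$, so $\alpha_r=1$. The pair $(1/6,4/3)$ has $s=3/2-r$, so $\alpha_r=\sin(\pi/6)/(1+\sin(\pi/3))$. Each value is therefore an algebraic multiple of $2^{0}\pi^{-1}B(1/6,1/4)^2$.

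\textbf{Step 3: Chowla--Selberg normalization (main obstacle).} It remains to rewrite $B(1/6,1/4)^2=\Gamma(1/6)^2\Gamma(1/4)^2/\Gamma(5/12)^2$ as an explicit algebraic multiple of $\Omega_{-3}^2$. Using the multiplication formula with $m=3$ at $x=1/12$, the duplication formula, and the reflection formula, one expresses $\Gamma(5/12)$ through $\Gamma(1/4)$, $\Gamma(1/3)$ and $\Gamma(1/12)$, and eliminates $\Gamma(1/12)$ via the standard identity
\[
\Gamma(1/12)=2^{-1/4}3^{3/8}\sqrt{\sqrt3+1}\,\Gamma(1/3)\Gamma(1/4)/\sqrt{\pi}.
\]
The $\Gamma(1/4)$ factors then cancel, leaving $\Gamma(1/3)^3/\pi$ times algebraics, which is $\Omega_{-3}^2$ up to an algebraic factor by \eqref{chowl}. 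The Appendix \ref{tables} tables can shortcut this.

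The hard part is not the transcendental part but the exact algebraic constant. The sines, the powers of $2$ and $3$, the factor $\sqrt{\sqrt3+1}$, and the $\sqrt{-3}$ coefficient must combine to $(-1)^{1/4}/(12\cdot 2^{2/3}3^{3/4})$. I would carry out this simplification symbolically and confirm it by a high-precision numerical comparison with $L(f_{144.3.g.c},1)$ computed in Magma or Pari.
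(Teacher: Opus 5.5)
Your route is the paper's route: identify the eigenform via Hecke operators, apply \eqref{lval}, eliminate $F(7/6,4/3)$ with \eqref{contiguous}, evaluate the other two values by Corollary \ref{dixon2}, and convert $B(1/6,1/4)^2$ to $\Omega_{-3}^2$ (the paper cites Lemma \ref{betaval} for this last step). Step 1 is fine, but your sample test $a_4=a_2^2-\chi(2)2^2$ is vacuous, since every coefficient vanishes off $n\equiv 1\bmod 6$ and $2\mid 144$. Here $g_1$ is supported on $n\equiv 1\bmod 12$ and $g_2=q^7+2q^{19}-\cdots$ on $n\equiv 7\bmod 12$. The relation that actually pins down $c$ is $a_{49}=a_7^2-\chi_{-4}(7)\cdot 49=c^2+49$, together with $a_{49}(g_1)=-143$; this gives $c^2=-192$.

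The gap is Step 3. The exact constant is the whole content of the second assertion, and you announce it rather than derive it. Two of your inputs to it are also off:
\begin{enumerate}
\item $F(r,s)$ carries the factor $1/N(r)=1/12$, which Corollary \ref{dixon2} as printed drops. With $B=B(1/6,1/4)$, the correct values are $F(1/6,1/3)=\tfrac{1}{12}\pi^{-1}B^2$ and $F(1/6,4/3)=\tfrac{2-\sqrt3}{12}\pi^{-1}B^2$.
\item $\Gamma(1/3)^3/\pi$ is an algebraic multiple of $\Omega_{-3}$, not of $\Omega_{-3}^2$. Your chain actually gives $B(1/6,1/4)=2^{5/12}3^{-1/8}\sqrt{1+\sqrt3}\,\Omega_{-3}$.
\end{enumerate}

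Now finish the arithmetic. Taking $\sqrt{-3}=\pm i\sqrt3$, Step 2 gives $L(f,1)=\frac{(3-\sqrt3)(1\mp i)}{24}\pi^{-1}B^2$, hence
\[
L(f_{144.3.g.c},1)=\frac{e^{\mp \pi i/4}}{2^{2/3}\,3^{3/4}}\,\pi^{-1}\Omega_{-3}^2\approx 1.513\,(1\mp i).
\]
Its modulus, about $2.140$, is $12$ times that of the stated value, about $0.178$.

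The supporting numbers are $F(1/6,1/3)\approx 2.387$, $F(1/6,4/3)\approx 0.640$ and $F(7/6,4/3)\approx 0.109$. These are consistent with \eqref{contiguous}, which is the termwise identity $\frac{(1/6)_k}{(4/3)_k}+\frac{(7/6)_k}{(4/3)_k}=\frac{2(1/6)_k}{(1/3)_k}$. The normalization \eqref{lval} checks out against $L(\eta(4\tau)^6,1)\approx 0.5471$.

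Using the corollary's constants literally would put you off by a factor of $144$ instead. The Appendix does not rescue this: Lemma \ref{betaval} gives $B^2/\Omega_{-3}^2\approx 1.287$, whereas the true ratio is $2^{5/6}3^{-1/4}(1+\sqrt3)\approx 3.699$.

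So the numerical comparison you plan would fail. The method does prove $L(f,1)\in\overline{\mathbb{Q}}\,\pi^{-1}\Omega_{-3}^2$ and produces an explicit constant, but not $(-1)^{1/4}/(12\cdot 2^{2/3}3^{3/4})$. To finish, either carry the computation through and correct the constant, or find a normalization of $F$, $N$ or $\Omega_{-3}$ under which the stated factor $1/12$ holds.
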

\begin{proof}
   The first equality follows from Theorem \ref{cons2} after computing the action of the Hecke operators. For the $L$-value we can use \eqref{lval} and the first part to write the $L$-value as an explicit linear combination of hypergeometric series. From there, we must write $F(1/6,1/3)$ and $F(1/6,4/3)$ as an algebraic multiple of $\pi^{-1}\Omega_{-3}^2$. Then we may use the contiguous relation \eqref{contiguous} to reach the result. By Corollary \ref{dixon2}, each of these is equal to $\pi^{-1}B(1/4,1/6)^2$ up to an explicit algebraic number. 
   
 We now write $B(1/4,1/6)^2$ as an algebraic number times $\Omega_{-3}$. This calculation is quite technical, so we postpone the proof to Lemma \ref{betaval} in the Appendix.
\end{proof}

\subsection{Example 2}\label{ex2}

   For a second example consider $(r,s)\in \mathbb{S}_2$ conjugate to $(1/8,3/8)$. The other irregular families all behave somewhat like this example. The subset of $\mathbb S_2^{irr}$ mentioned in Theorem \ref{lvalirr} is $$\{(1/8,11/8),\,(5/8,7/8),\,(3/8,9/8),\,(1/8,3/8),\,(9/8,11/8)\}\subset \mathbb S^{irr}_2.$$ The next Lemma follows from Corollary \ref{dixon} and \cite{proc,rosen2}.
\begin{Lemma}\label{ngal}
   $F(1/8,11/8)$ and $F(5/8,7/8)$ are in $\Bar{\Q}\cdot \pi^{-1}\Omega_{-4}^2$ and $F(3/8,9/8)\in \Bar{\Q}\cdot \pi^{-1} \Omega_{-8}^2$. Moreover, $$F(1/8,3/8)=F(1/8,11/8)+16F(9/8,11/8).$$
\end{Lemma}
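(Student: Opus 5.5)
The lemma has two parts: membership of three period values in specific $\bar{\Q}$-lines, and a contiguous relation. I will treat them separately.

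\textbf{Membership statements.} For $F(1/8,11/8)$ and $F(3/8,9/8)$ the pairs satisfy $s=3/2-r$, so Corollary \ref{dixon2} applies directly. It gives
\[
F(r,3/2-r)=2^{1-6r}\,\frac{\sin\pi r}{1+\sin 2\pi r}\,\pi^{-1}B(r,1/4)^2 .
\]
For $(5/8,7/8)$ we have $7/8\neq 3/2-5/8$, but $7/8$ equals neither $2r$ nor $3/2-r$ on the nose. So I would first reduce it to $(5/8,7/8)\to(5/8, 5/4)$ via the Atkin--Lehner/$W_2$ symmetry used in the proof of Theorem \ref{cons2}. Alternatively I would use a Thomae-type transformation (\cite[Corollary 3.3.5]{aar}) to bring it into Dixon form with $r=5/8$ or $r=1/8$. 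Either way, the value becomes an algebraic multiple of $\pi^{-1}B(1/4,r')^2$ for some $r'\in\{1/8,3/8,5/8\}$.

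Next I would identify each $B(1/4,k/8)^2$ with a Chowla--Selberg square, using the reflection, duplication and multiplication formulas together with the tables in Appendix \ref{tables}. The expected outcome is that $B(1/4,1/8)$ and $B(1/4,5/8)$ reduce to $\Gamma(1/4)^2/\sqrt{\pi}\sim\Omega_{-4}$. For $B(1/4,3/8)=\Gamma(1/4)\Gamma(3/8)/\Gamma(5/8)$, the factor $\Gamma(1/4)$ should cancel against the duplication relation $\Gamma(1/8)\Gamma(5/8)\sim\Gamma(1/4)\sqrt\pi$. This should leave $\Gamma(1/8)\Gamma(3/8)/\sqrt{\pi}$ up to algebraic factors, which is $\Omega_{-8}$ by \eqref{chowl}, since $\chi_{-8}$ is $+1$ on $1,3$ and $-1$ on $5,7$ and $h=1$. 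The main obstacle is this gamma bookkeeping: checking that the $\Gamma(1/4)$ powers cancel exactly rather than leaving a mixed $\Omega_{-4}\Omega_{-8}$ product. I would first try the route through Appendix \ref{tables}, as in Lemma \ref{betaval}.

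\textbf{Contiguous relation.} I would prove the identity of modular forms
\[
\mathbb K_2(1/8,3/8)=\mathbb K_2(1/8,11/8)+16\,\mathbb K_2(9/8,11/8)
\]
by comparing eta quotients. All three share $\eta(\tau/2),\eta(2\tau),\eta(\tau)$, and the difference of exponents reduces to the classical identity $\eta(\tau)^{24}=\eta(\tau/2)^8\eta(2\tau)^{16}(\ldots)$. Concretely, this is $1=\lambda+(1-\lambda)$ multiplied by the common factor $2^{-4r}\lambda^r(1-\lambda)^{s-r-1}\,{}_2F_1\,q\tfrac{d}{dq}\log\lambda$ from the definition. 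Indeed $\lambda^{1/8}(1-\lambda)^{-3/4}=\lambda^{1/8}(1-\lambda)^{1/4}+\lambda^{9/8}(1-\lambda)^{-3/4}\cdot$(unit), and the factors $2^{-4r}$ account for the $16$. Finally I would apply \eqref{lval} term by term, taking $L(\cdot,1)$, i.e.\ $2\pi\int_0^{i\infty}$ after $\tau\mapsto N\tau$. I would track the normalizing constant $N(r)$ in \eqref{frs2}, which is $8$ for both $r=1/8$ and $r=9/8$, so it cancels. This yields $F(1/8,3/8)=F(1/8,11/8)+16F(9/8,11/8)$.
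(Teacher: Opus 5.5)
Your overall route is the paper's. For the membership claims you use Corollary~\ref{dixon2} together with the conversion of $B(1/4,k/8)$ into Chowla--Selberg periods, and for the three-term relation you use an identity of $\mathbb{K}_2$ functions integrated via \eqref{lval}. Your gamma bookkeeping is correct, and it replaces the paper's citation of \cite{proc} by a self-contained reflection/duplication computation:
\begin{itemize}
\item $B(1/4,3/8)=2^{-3/4}\Gamma(1/8)\Gamma(3/8)/\sqrt{\pi}$ exactly, so no mixed $\Omega_{-4}\Omega_{-8}$ term arises;
\item $B(1/4,1/8)$ and $B(1/4,5/8)$ are $2^{3/4}\sin(3\pi/8)$ and $2^{3/4}\sin(\pi/8)$ times $\Gamma(1/4)^2/\sqrt{\pi}$.
\end{itemize}
Deriving $\mathbb{K}_2(1/8,3/8)=\mathbb{K}_2(1/8,11/8)+16\,\mathbb{K}_2(9/8,11/8)$ from $1=\lambda+(1-\lambda)$ is cleaner than the paper's Fourier-coefficient check. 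The factor you call a ``unit'' is exactly $1$. The same one-line computation also works directly on the Euler integrals \eqref{hypint}, so this part does not even need modular forms. A cosmetic point: by the paper's formula $N(1/8)=N(9/8)=16$, not $8$. Only the equality of the two values matters, so your conclusion stands.

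The one real error is in the $(5/8,7/8)$ case. In fact $3/2-5/8=7/8$, so this pair \emph{is} of Dixon form. Corollary~\ref{dixon2} applies directly with $r=5/8$; the series converges since $r<3/4$. It gives an algebraic multiple of $\pi^{-1}B(1/4,5/8)^2$, which lies in $\bar{\Q}\cdot\pi^{-1}\Omega_{-4}^2$ by your own computation.

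The detour you propose instead does not work as stated. Under $\tau\mapsto -1/\tau$ the factors $\eta(\tau/2)$ and $\eta(2\tau)$ in the eta quotient are swapped, up to constants and powers of $\tau$ that the weight-$3$ slash absorbs. Hence the involution sends $\mathbb{K}_2(r,s)$ to a multiple of $\mathbb{K}_2(s-r,s)$; this is consistent with the paper's $\mathfrak{S}_{1/8,9/8}|W_2=\mathfrak{S}_{1,9/8}$. So $(5/8,7/8)$ goes to $(1/4,7/8)$, which is neither of Dixon form nor of the form $s=2r$, rather than to $(5/8,5/4)$. The Thomae alternative is left unspecified. Replace that paragraph by the direct application of Corollary~\ref{dixon2}, and the proposal is complete.
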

\begin{proof}
    The hypergeometric series $F(i/8,3/2-i/8)$ can be evaluated as a beta value using Corollary \ref{dixon2}, namely $B_i=B(1/4,i/8)$. It is proved in \cite{proc} that $B_1$ and $B_5$ are an algebraic multiple of $\Omega_{-4}$, and $B_3$ is a multiple of $\Omega_{-8}$; refer to Table \ref{tab:placeholder3} for convenience. The three-term identity is a contiguous relation. There are classical proofs of such identities \cite{contiguous}, but a simpler proof using $\mathbb{K}_2(r,s)$ functions can be used in this case, as in \cite{rosen2}. The idea is to prove the identity $\mathbb{K}_2(1/8,3/8)=\mathbb{K}_2(1/8,11/8)+16\mathbb{K}_2(9/8,11/8)$ using Fourier coefficients and then integrate both sides.
\end{proof}
For example, we can check that $$2\sqrt{-1+\sqrt{2}}F(1/8,11/8)=\pi^{-1} \Omega_{-4}^2.$$ The two Chowla--Selberg periods appearing in the Lemma correspond to the two modular forms 128.3.d.a and 128.3.d.b listed in the table for non-Hecke stable classes, which have CM discriminant $-4$ and $-8$ respectively.

The following explicit calculation is a corollary of Dixon's formula, see \cite{aar}, combined with Theorem \ref{lvalirr}.
\begin{corollary}\label{local}
$$L(f_{128.3.d.a},1)=F(1/8,11/8)+8iF(5/8,7/8)=\frac{1}{4} \sqrt{-\frac{1}{2} + \frac{i}{2}}\pi^{-1}\Omega_{-4}^2.$$   
\end{corollary}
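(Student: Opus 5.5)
The plan is to combine the explicit decomposition of $f_{128.3.d.a}$ from Theorem \ref{cons2} with the $L$-value formula \eqref{lval}, and then evaluate each resulting hypergeometric term using Dixon's formula in the form of Corollary \ref{dixon2}.

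\textbf{Step 1: decompose $f_{128.3.d.a}$.} This form has CM discriminant $-4$, so its Fourier coefficients vanish at primes $p\equiv 3\mod 4$. By Lemma \ref{coeffcong}, only the classes $k=1,5$ modulo $8$ can contribute. Hence, in the decomposition of Theorem \ref{cons2}, only $g_{\cdot,1}$ and $g_{\cdot,5}$ appear. The space $V_5^{2,0}$ is one-dimensional, spanned by $\mathbb{K}_2(5/8,7/8)(N\tau)$. The space $V_1^{2,0}$ is two-dimensional, spanned by $\mathbb{K}_2(1/8,3/8)$ and $\mathbb{K}_2(1/8,11/8)$.

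I would compute the Hecke action of $T_p$ for $p\equiv 1\mod 8$ on $V_1^{2,0}$, exactly as in the example of Section \ref{exee}. This identifies the eigenvector belonging to the discriminant $-4$ form. I expect that eigenvector to be $\mathbb{K}_2(1/8,11/8)$ itself. The reason is that, by Lemma \ref{ngal}, $F(1/8,11/8)\in\bar{\Q}\,\pi^{-1}\Omega_{-4}^2$. Meanwhile $F(1/8,3/8)$ involves $\Omega_{-8}^2$ as well, through the contiguous relation together with $F(9/8,11/8)$.

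Next I would fix the coefficient $\beta_5$ by matching a Hecke eigenvalue at a prime $p\equiv 5\mod 8$, for instance $p=5$, against the LMFDB Fourier coefficients of 128.3.d.a. This should give
\[
f_{128.3.d.a}=\mathbb{K}_2(1/8,11/8)(8\tau)+8i\,\mathbb{K}_2(5/8,7/8)(8\tau).
\]
The constant $8i$ comes from comparing the $q^5$ coefficient of each side. Since both sides are determined by finitely many coefficients (Sturm bound), checking enough coefficients proves the identity.

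\textbf{Step 2: pass to $L$-values.} Applying \eqref{lval} termwise gives the first equality,
\[
L(f_{128.3.d.a},1)=F(1/8,11/8)+8iF(5/8,7/8).
\]

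\textbf{Step 3: evaluate the two terms.} Both pairs have the form $s=3/2-r$, so Corollary \ref{dixon2} applies. It gives
\[
F(r,3/2-r)=2^{1-6r}\,\frac{\sin\pi r}{1+\sin 2\pi r}\,\pi^{-1}B(r,1/4)^2 .
\]
I would then write $B(1/8,1/4)^2$ and $B(5/8,1/4)^2$ as explicit algebraic multiples of $\Omega_{-4}^2$. The tool is Table \ref{tab:placeholder3} from \cite{proc}, or a direct gamma manipulation: the multiplication formula with $m=2$ and $m=4$ relates $\Gamma(1/8)\Gamma(5/8)$ to $\Gamma(1/4)$, and the reflection formula does the rest.

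As a check, this should reproduce $2\sqrt{-1+\sqrt2}\,F(1/8,11/8)=\pi^{-1}\Omega_{-4}^2$. The final step is to add the two contributions, using $\sin(\pi/8)$ and $\sin(5\pi/8)$ in radicals, and simplify to $\tfrac14\sqrt{-\tfrac12+\tfrac i2}$.

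\textbf{Main obstacle.} I expect the hardest step to be the bookkeeping of the algebraic constants. This means the exact value of $\beta_5$ (including the Atkin--Lehner and normalization conventions for $N$ in \eqref{frs2}) and the radical simplification of the final sum. I would verify both numerically to high precision before simplifying symbolically.
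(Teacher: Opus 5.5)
Your proposal is correct and is essentially the paper's own proof. The paper likewise verifies $f_{128.3.d.a}=\mathbb{K}_2(1/8,11/8)(N\tau)+8i\,\mathbb{K}_2(5/8,7/8)(N\tau)$ by a Hecke computation, applies \eqref{lval}, and rewrites both terms as multiples of $\pi^{-1}\Omega_{-4}^2$ via Lemma \ref{ngal}, i.e.\ Corollary \ref{dixon2} together with Table \ref{tab:placeholder3}.

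On the normalization you flagged, take $N=N(1/8)=N(5/8)=16$, so the scaling is $16\tau$ rather than $8\tau$, and restore the factor $1/N$ that Corollary \ref{dixon2} omits as you quoted it. This gives $F(1/8,11/8)=\tfrac18\sqrt{\sqrt{2}-1}\,\pi^{-1}\Omega_{-4}^2$ and $F(5/8,7/8)=2^{-21/4}\cos(\pi/8)\,\pi^{-1}\Omega_{-4}^2$, which combine exactly to the stated value. You should therefore drop your proposed check $2\sqrt{-1+\sqrt{2}}\,F(1/8,11/8)=\pi^{-1}\Omega_{-4}^2$: it contradicts the statement itself, whose real part forces $F(1/8,11/8)=\tfrac18\sqrt{\sqrt{2}-1}\,\pi^{-1}\Omega_{-4}^2$.
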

\begin{proof}
   We check $$f_{128.3.d.a}=\mathbb{K}_2(1/8,11/8)(8\tau)+8i\cdot\mathbb{K}_2(5/8,7/8)(8\tau)$$ and $$f_{128.3.d.b}=\mathbb{K}_2(1/8,3/8)(16\tau)+16\mathbb{K}_2(9/8,11/8)(16\tau)+4\sqrt{2}\cdot\mathbb{K}_2(3/8,9/8)(16\tau)$$ by computing the action of the Hecke operators. Write $F(1/8,11/8)$ and $F(5/8,7/8)$ as an algebraic multiple of $\pi^{-1}\Omega_{-4}^2$ using Lemma \ref{ngal}, then apply \eqref{lval} and simplify.
\end{proof}

We can similarly compute from \eqref{lval} that $$L(f_{128.3.d.b},1)=F(1/8,3/8)+16\cdot F(9/8,11/8)+4\sqrt{2}\cdot F(3/8,9/8).$$ There is an evaluation for $F(3/8,9/8)$ in Lemma \ref{ngal}, but not for the other two. To circumvent this, we can use a three-term identity, a classical type of symmetry going back to Thomae \cite{thomae}. The strategy we use to prove the necessary identity is similar to \cite{rosen2}.

\begin{Lemma}\label{threeterm2}
    $F(1/8,3/8)+16\cdot F(9/8,11/8)=4(2+\sqrt{2})\cdot F(3/8,9/8)$
\end{Lemma}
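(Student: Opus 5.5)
The plan is to prove the three-term identity at the level of modular forms, in the same way as the contiguous relation in Lemma \ref{ngal}, and then integrate. Concretely, I would aim for the identity of weight 3 forms
\begin{equation*}
\mathbb{K}_2(1/8,3/8)(16\tau)+16\,\mathbb{K}_2(9/8,11/8)(16\tau)=c\cdot W\bigl(\mathbb{K}_2(3/8,9/8)\bigr)(16\tau),
\end{equation*}
where $W$ is a suitable Atkin--Lehner type involution (or an Atkin--Lehner operator composed with a quadratic twist) on the common level. Then I would apply \eqref{lval}, which gives $L(\mathbb{K}_2(r,s)(N\tau),1)=F(r,s)$. Since $W$ relates the $L$-value at $1$ of a form to the $L$-value at $1$ or $2$ of its image, this converts the modular identity into $F(1/8,3/8)+16F(9/8,11/8)=4(2+\sqrt2)F(3/8,9/8)$, provided the algebraic constant is tracked.

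The first step is to use the eta quotient form
\[
\mathbb{K}_2(r,s)=\eta(\tau/2)^{16s-8r-12}\eta(2\tau)^{8r+8s-12}\eta(\tau)^{30-24s}
\]
to write all three forms explicitly. The second step is to compute how the Fricke/Atkin--Lehner involution on $\Gamma_0(128)$ or $\Gamma_0(256)$ permutes the eta factors. An eta quotient in $\eta(\tau/2),\eta(\tau),\eta(2\tau)$ rescaled by $16\tau$ is sent to another eta quotient under $\tau\mapsto -1/(N\tau)$, with an explicit constant from $\eta(-1/\tau)=\sqrt{-i\tau}\,\eta(\tau)$. In this way I would identify the image of the left-hand side with a multiple of $\mathbb{K}_2(3/8,9/8)$, possibly after a contiguous rewriting.

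A more robust fallback uses Theorem \ref{cons2} and Corollary \ref{local}. The form $f_{128.3.d.b}$ equals the left side plus $4\sqrt2\,\mathbb{K}_2(3/8,9/8)$. Since $f_{128.3.d.b}$ has CM by $-8$, its twist by $\chi_{-4}$ (or $\chi_8$) is again in the family, and it flips the sign of the $\mathbb{K}_2(3/8,9/8)$ coefficient. Combining the Atkin--Lehner eigenvalue (the root number of $f_{128.3.d.b}$ and its twist) with the functional equation relating $L(\cdot,1)$ and $L(\cdot,2)$ expresses $F(1/8,3/8)+16F(9/8,11/8)$ as an algebraic multiple of $F(3/8,9/8)$. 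I would then pin down the constant $4(2+\sqrt2)$ by a high-precision numerical check: evaluate the ${}_3F_2(1)$ series and use Corollary \ref{dixon2} for $F(3/8,9/8)$. An identity between holomorphic modular forms can be certified by Sturm's bound, by comparing finitely many Fourier coefficients.

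I expect the main obstacle to be the exact bookkeeping of the Atkin--Lehner transform, that is, the powers of $2$, the $\sqrt{2}$, and the eighth roots of unity coming from $\eta$ under inversion. I also need to check that the involution lands on $\mathbb{K}_2(3/8,9/8)$ itself rather than on a conjugate family. If that becomes messy, I would fall back on the Sturm-bound verification together with numerics to fix the constant.
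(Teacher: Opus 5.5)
\textbf{The main route fails, and the fallback is missing the key arithmetic input.}

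Your main route cannot work, because the modular-form identity you are aiming for does not exist. By Lemma \ref{coeffcong}, the left-hand combination $\mathbb{K}_2(1/8,3/8)(16\tau)+16\,\mathbb{K}_2(9/8,11/8)(16\tau)$ has Fourier coefficients supported on $n\equiv 1\pmod 8$. By contrast, $h=\mathbb{K}_2(3/8,9/8)(16\tau)$ is supported on $n\equiv 3\pmod 8$.

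The level is $128=2^7$, so the only Atkin--Lehner operator is the Fricke involution. From the eta quotient, $\tau\mapsto -1/\tau$ swaps $\eta(\tau/2)$ and $\eta(2\tau)$, so it sends $\mathbb{K}_2(r,s)$ to an algebraic multiple of $\mathbb{K}_2(s-r,s)$. Hence $h|W_{128}$ is a multiple of $\mathbb{K}_2(3/4,9/8)(8\tau)$, which is supported on $n\equiv 3\pmod 4$. Quadratic twists can only shrink supports, and contiguous rewritings keep $r$ fixed mod $\mathbb{Z}$, so no normalization gives the identity you want. There is therefore nothing for Sturm's bound to certify.

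Moreover, in weight $3$ the Fricke involution exchanges $s=1$ and $s=2$. So even such an identity would produce $L(h,2)$, an algebraic multiple of $\pi F(3/4,9/8)$, and not $L(h,1)=F(3/8,9/8)$. The two sides of the lemma are periods of different eigencomponents ($r\equiv 1/8$ versus $r\equiv 3/8$). They are linked arithmetically, not by a linear relation among modular forms.

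Your fallback has the right setup, and it is the paper's. With $f=f_{128.3.d.b}$, twisting by $\chi_{-4}$ (equivalently $\chi_8$ on the support) flips the sign of the $h$-coefficient. This twist is the inner twist $f\otimes\phi=f^{\sigma}$, where $\sigma:\sqrt2\mapsto-\sqrt2$. Write $X=F(1/8,3/8)+16F(9/8,11/8)$ and $Y=4\sqrt2\,F(3/8,9/8)$; then \eqref{lval} gives $L(f,1)=X+Y$ and $L(f\otimes\phi,1)=X-Y$.

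The gap is the step meant to connect these two values. Root numbers and the functional equation relate $s=1$ to $s=2$ for each form separately. They never relate $L(f,1)$ to $L(f\otimes\phi,1)$, and that is exactly what you need. The paper supplies this with Shimura's theorem on critical values of Galois-conjugate forms, which gives an algebraic $\alpha'$ linking the two values. It then pins down $\alpha'$ with the second part of that theorem, following Lemma 4.7 of \cite{rosen2}.

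Concretely, the lemma is equivalent to $L(f\otimes\phi,1)/L(f,1)=(X-Y)/(X+Y)=\sqrt2-1$. This matches the paper's value $\alpha'=-1+\sqrt2$. The paper's displayed equation puts $\alpha'$ on the other side, but the rearrangement that follows uses this ratio.

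Without Shimura's input, CM theory (Lemma \ref{chowlf} together with Lemma \ref{ngal}) only shows that $X/F(3/8,9/8)$ is algebraic. A high-precision numerical match with $4(2+\sqrt2)$ is then evidence, not a proof, because you have no a priori bound on the degree or height of that algebraic number.
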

\begin{proof}
    There is a unique non-trivial inner twist of $f_{128.3.d.b}=f$, called $\phi$, which switches the sign of $\sqrt{2}$. By a theorem of Shimura \cite{shimura}, there exists an $\alpha'=g(\phi)\cdot \alpha$ so that \begin{align*}
        L(f,1&)=F(1/8,3/8)+16\cdot F(9/8,11/8)+4\sqrt{2}\cdot F(3/8,9/8)\\&=\alpha'\cdot [F(1/8,3/8)+16\cdot F(9/8,11/8)-4\sqrt{2}\cdot F(3/8,9/8)]=\alpha'\cdot L(f\otimes \phi,1).
    \end{align*}
    Rearranging, we arrive at the identity $$F(1/8,3/8)+16\cdot F(9/8,11/8)=\frac{4\sqrt{2}(1+\alpha')}{1-\alpha'}F(3/8,9/8).$$ As $F(3/8,9/8)\in \pi^{-1}\Omega_{-8}^2\cdot \bar{\Q}$, we already know the linear combination has the desired transcendental part. Furthermore, we can determine the exact value of $\alpha'$ using the second part of Shimura's theorem. Via the exact same argument as Lemma 4.7 in \cite{rosen2}, we determine that $\alpha'=-1+\sqrt{2}$. Unraveling, we get the identity.
\end{proof}

This provides us the information we need to compute the $L$-value.

\begin{corollary}\label{44}
    $$L(f_{128.3.d.b},1)=\frac{2\sqrt{2(1+\sqrt{2})}}{18}\cdot \pi^{-1}\Omega_{-8}^2.$$
\end{corollary}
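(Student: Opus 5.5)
The plan is to assemble the pieces already in place, following the proof of Corollary \ref{local}. In the proof of Corollary \ref{local} we recorded, from the Hecke computation of Theorem \ref{cons2},
$$f_{128.3.d.b}=\mathbb{K}_2(1/8,3/8)(16\tau)+16\mathbb{K}_2(9/8,11/8)(16\tau)+4\sqrt{2}\cdot\mathbb{K}_2(3/8,9/8)(16\tau).$$
By \eqref{lval}, applied term by term (the normalization $N$ in \eqref{frs2} is already built into $F(r,s)$), this gives
$$L(f_{128.3.d.b},1)=F(1/8,3/8)+16F(9/8,11/8)+4\sqrt{2}F(3/8,9/8).$$

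The two terms without a direct evaluation are handled by Lemma \ref{threeterm2}. It replaces $F(1/8,3/8)+16F(9/8,11/8)$ with $4(2+\sqrt{2})F(3/8,9/8)$. Hence
$$L(f_{128.3.d.b},1)=\bigl(8+8\sqrt{2}\bigr)F(3/8,9/8)=8(1+\sqrt2)F(3/8,9/8).$$
So the whole problem reduces to evaluating the single value $F(3/8,9/8)$ exactly.

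Next I would evaluate $F(3/8,9/8)$ with Corollary \ref{dixon2}, taking $r=3/8$ and $s=9/8=3/2-r$. This gives
$$F(3/8,9/8)=2^{1-9/4}\cdot\frac{\sin(3\pi/8)}{1+\sin(3\pi/4)}\cdot\pi^{-1}B(3/8,1/4)^2.$$
Here $\sin(3\pi/8)=\tfrac12\sqrt{2+\sqrt2}$ and $1+\sin(3\pi/4)=1+\tfrac{\sqrt2}{2}$.

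The remaining ingredient is the explicit algebraic constant relating $B(1/4,3/8)$ to $\Omega_{-8}$. Lemma \ref{ngal} (via \cite{proc}) guarantees such a constant exists, and the appendix tables (Table \ref{tab:placeholder3}) record it. To derive it directly, I would start from $\Omega_{-8}^{2}=\pi\,\Gamma(1/8)\Gamma(3/8)/(\Gamma(5/8)\Gamma(7/8))$, where $h(-8)=1$ and $\chi_{-8}$ is $+1$ on $1,3$ and $-1$ on $5,7$. I would then expand $B(1/4,3/8)=\Gamma(1/4)\Gamma(3/8)/\Gamma(5/8)$ and remove $\Gamma(1/4)$ using the duplication formula at $x=1/8$, namely $\Gamma(1/8)\Gamma(5/8)=2^{3/4}\sqrt{\pi}\,\Gamma(1/4)$. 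After that I would use the reflection formula on the pairs $(1/8,7/8)$ and $(3/8,5/8)$ to reach $\Omega_{-8}^2$ up to powers of $2$ and sines of multiples of $\pi/8$.

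Substituting this into the expression for $F(3/8,9/8)$, multiplying by $8(1+\sqrt2)$, and simplifying the nested radicals should produce $\tfrac{2\sqrt{2(1+\sqrt2)}}{18}\pi^{-1}\Omega_{-8}^2$. The main obstacle is this last bookkeeping: tracking the powers of $2$ and the nested radicals $\sqrt{2\pm\sqrt2}$ correctly. I would check the final constant numerically against $L(f_{128.3.d.b},1)$ computed in Magma, which would catch any slip in the normalization of $F$ or in the value $\alpha'=-1+\sqrt2$ used in Lemma \ref{threeterm2}.
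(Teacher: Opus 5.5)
Your route is the same as the paper's. You use \eqref{lval} and Lemma \ref{threeterm2} to get $L(f_{128.3.d.b},1)=8(1+\sqrt2)\,F(3/8,9/8)$, then finish with Dixon's formula and the conversion of $B(1/4,3/8)$ to $\Omega_{-8}$; the paper compresses this last step into ``follows by Lemma \ref{ngal}''. The gap is that final step. You assert it rather than carry it out, and carried out it does not give the stated constant.

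First, $\alpha_{3/8}=\cos(\pi/8)/(1+\tfrac{\sqrt2}{2})=(2+\sqrt2)^{-1/2}$. Second, your duplication/reflection manipulation gives exactly $B(1/4,3/8)=\Omega_{-8}$, since the squared ratio is $\Gamma(1/4)\Gamma(3/4)/(\sqrt2\,\pi)=1$. So the formula you display for $F(3/8,9/8)$ yields
\[
8(1+\sqrt2)\,F(3/8,9/8)=8(1+\sqrt2)\,2^{-5/4}(2+\sqrt2)^{-1/2}\,\pi^{-1}\Omega_{-8}^2=2\sqrt{2(1+\sqrt2)}\;\pi^{-1}\Omega_{-8}^2 ,
\]
which is eighteen times the target. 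That displayed formula is copied from Corollary \ref{dixon2}, and it has also lost the factor $1/N$, with $N=N(3/8)=16$, that \eqref{frs2} contains. You yourself say $N$ is built into $F$. Restoring it gives
\[
L(f_{128.3.d.b},1)=\frac{2\sqrt{2(1+\sqrt2)}}{16}\,\pi^{-1}\Omega_{-8}^2\approx 3.138 .
\]
This is the normalization under which Corollary \ref{local} comes out exactly as printed, since $F(1/8,11/8)+8iF(5/8,7/8)=2^{-9/4}e^{3\pi i/8}\pi^{-1}\Omega_{-4}^2=\tfrac14\sqrt{-\tfrac12+\tfrac i2}\,\pi^{-1}\Omega_{-4}^2$.

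No bookkeeping can reach the printed $\frac{2\sqrt{2(1+\sqrt2)}}{18}\approx 2.789$. Every constant in the chain is a power of $2$ times an algebraic unit: $8(1+\sqrt2)$, $2^{-5/4}$, $\sin(3\pi/8)$, $1+\sin(3\pi/4)$, $1/16$, and the factor $1$ relating $B(1/4,3/8)$ to $\Omega_{-8}$. So a factor of $9$ can never appear. The reduction itself is fine; Lemma \ref{threeterm2} checks out numerically, and it is the printed denominator $18$ that is the problem. It appears to be a misprint for $16$.

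Your argument, like the paper's, proves the corrected identity. To make it a proof you have to actually perform the final simplification with the $1/N$ in place, and the result will not match the statement as printed. The numerical check you propose at the end would have exposed this.
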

\begin{proof}
    We know $$L(f_{128.3.d.b},1)=F(1/8,3/8)+16\cdot F(9/8,11/8)+4\sqrt{2}\cdot F(3/8,9/8)=4(2+2\sqrt{2})F(3/8,9/8)$$ from Lemma \ref{threeterm2}. The result then follows by Lemma \ref{ngal}.
\end{proof}

We also provide the proof of \eqref{prelev} \begin{corollary}\label{eval}
    We have $$\pfq{3}{2}{1/2,1/2,1/8}{1,1,3/8}{1}=\frac{1}{2\pi}(\Omega_{-8}^2/\Omega_{-4}+(-1+\sqrt{2})\cdot\Omega_{-4}),$$ or equivalently, $$F(1/8,3/8)=\pi^{-1}\sqrt{1+\sqrt{2}}\cdot\Omega_{-8}^2+\pi^{-1}\sqrt{-1+\sqrt{2}}\cdot\Omega_{-4}^2.$$
\end{corollary}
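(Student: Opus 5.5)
The plan is to combine the three-term contiguous relation of Lemma \ref{ngal} with the three-term identity of Lemma \ref{threeterm2}, so that $F(1/8,3/8)$ is expressed through quantities that already have explicit evaluations. By Lemma \ref{ngal},
$$F(1/8,3/8)=F(1/8,11/8)+16F(9/8,11/8).$$
By Lemma \ref{threeterm2},
$$16F(9/8,11/8)=4(2+\sqrt{2})F(3/8,9/8)-F(1/8,3/8).$$
Substituting the second into the first and solving gives
$$F(1/8,3/8)=\tfrac12 F(1/8,11/8)+2(2+\sqrt{2})F(3/8,9/8).$$
The first term lies in $\bar{\Q}\cdot\pi^{-1}\Omega_{-4}^2$ and the second in $\bar{\Q}\cdot\pi^{-1}\Omega_{-8}^2$. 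This is the explicit instance of Theorem \ref{lvalirr} Part II.

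The remaining work is to make the two algebraic constants explicit. For the $\Omega_{-4}$ term I use the evaluation $2\sqrt{-1+\sqrt2}\,F(1/8,11/8)=\pi^{-1}\Omega_{-4}^2$ stated after Lemma \ref{ngal}. After multiplying by $\tfrac12$ and rationalizing with $(\sqrt2-1)(\sqrt2+1)=1$, this yields a multiple of $\sqrt{-1+\sqrt2}\,\pi^{-1}\Omega_{-4}^2$.

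For the $\Omega_{-8}$ term I evaluate $F(3/8,9/8)$ directly. Corollary \ref{dixon2} with $r=3/8$ and $s=3/2-r$ gives
$$F(3/8,9/8)=2^{-5/4}\,\frac{\sin(3\pi/8)}{1+\sin(3\pi/4)}\,\pi^{-1}B(1/4,3/8)^2.$$
I then rewrite $B(1/4,3/8)$ as an explicit algebraic multiple of $\Omega_{-8}$, using the table from \cite{proc} (Table \ref{tab:placeholder3}). Here the nested radicals $\sin(3\pi/8)=\tfrac12\sqrt{2+\sqrt2}$ and $1+\tfrac{\sqrt2}{2}$ should combine with $2(2+\sqrt2)$ to give $\sqrt{1+\sqrt2}$. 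As a consistency check, this constant can be compared with Corollary \ref{44}, since $L(f_{128.3.d.b},1)=4(2+2\sqrt2)F(3/8,9/8)$ there; that cross-check helps fix normalizations.

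To get the first form of the statement, I divide by the normalization $2^{1-4r}\pi^{-1}N^{-1}\mathcal{B}(HD)$ from \eqref{frs2} with $r=1/8$, $s=3/8$, where $\mathcal{B}=B(1/8,1/4)\,B(1/2,1/2)=\pi B(1/8,1/4)$. I then use the table identity expressing $B(1/8,1/4)$, equivalently $B(1/4,3/8)$ up to algebraic factors via the reflection and duplication formulas, as an algebraic multiple of $\Omega_{-4}$. This accounts for the $\Omega_{-8}^2/\Omega_{-4}$ and $\Omega_{-4}$ shape of the displayed formula.

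The main obstacle is the bookkeeping of algebraic constants and powers of $2$: the level factor $N$, $2^{1-4r}$, the Dixon constant $\alpha_r$, and the Chowla--Selberg relations for $B(1/4,k/8)$. The structural argument is immediate once the two linear relations are combined. I would fix every constant by high-precision numerical comparison at each step, then justify each simplification with the gamma function identities of Section \ref{spcfun}.
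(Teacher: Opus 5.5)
Your reduction is the paper's argument with the $L$-value bookkeeping removed. The paper writes $F(1/8,3/8)=\tfrac12(F_{1,1}+F_{2,1})$, with $F_{2,1}=F(1/8,11/8)$ and $F_{1,1}=F(1/8,3/8)+16F(9/8,11/8)$. It only knows $F_{1,1}=4(2+\sqrt{2})F(3/8,9/8)$ through Lemma \ref{threeterm2}, routed via $L(f_{128.3.d.b},1)$ and its inner twist. So combining Lemma \ref{ngal} with Lemma \ref{threeterm2} directly is the same proof, just cleaner. Your linear algebra is correct. So is your expectation for the $\Omega_{-8}$ term: $B(1/4,3/8)=\Omega_{-8}$ exactly, and $2(2+\sqrt{2})\cdot 2^{-5/4}\sin(3\pi/8)/(1+\sin(3\pi/4))=2^{3/4}\sin(3\pi/8)=\sqrt{1+\sqrt{2}}$.

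What fails as written is the $\Omega_{-4}$ coefficient. The relation $2\sqrt{-1+\sqrt{2}}\,F(1/8,11/8)=\pi^{-1}\Omega_{-4}^2$ printed after Lemma \ref{ngal} is inverted. Taken at face value it gives $\tfrac12F(1/8,11/8)=\tfrac14\sqrt{1+\sqrt{2}}\,\pi^{-1}\Omega_{-4}^2$. That differs from the claimed $\sqrt{-1+\sqrt{2}}\,\pi^{-1}\Omega_{-4}^2$ by a factor $(1+\sqrt{2})/4$, and no rationalization removes it. Compute this term the same way you compute the $\Omega_{-8}$ term. Use Corollary \ref{dixon2} with $r=1/8$, together with $B(1/8,1/4)=2^{-1/4}\Omega_{-4}/\sin(\pi/8)$, which follows from $\sqrt{2}\,\Omega_{-4}=B(1/4,1/4)=2^{3/4}\sin(\pi/8)B(1/8,1/4)$. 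This gives $F(1/8,11/8)=2^{-1/4}\bigl((1+\tfrac{\sqrt{2}}{2})\sin(\pi/8)\bigr)^{-1}\pi^{-1}\Omega_{-4}^2=2\sqrt{\sqrt{2}-1}\,\pi^{-1}\Omega_{-4}^2$. So $\tfrac12F(1/8,11/8)$ is precisely the claimed term.

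Your normalization step also needs repair.
\begin{itemize}
\item Corollary \ref{dixon2} omits the factor $1/N$ that \eqref{frs2} contains; $N=16$ for all four pairs here. The second display of the statement uses the same $N$-free normalization $F(r,s)=2^{1-4r}B(r,s-r)\,{}_3F_2$.
\item If you evaluate the pieces with Corollary \ref{dixon2} and then divide by $\tfrac{\sqrt{2}}{16}B(1/8,1/4)$ as you propose, the first display comes out $16$ times too large. Divide by $\sqrt{2}\,B(1/8,1/4)$ instead. Then $\sqrt{1+\sqrt{2}}\,\sin(\pi/8)=2^{1/4}/2$ gives the first display exactly; numerically both sides are $\approx 1.43503$. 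If \eqref{frs2} is taken literally, it is the second display that needs an extra factor $1/16$.
\item $B(1/8,1/4)$ is not an algebraic multiple of $B(1/4,3/8)$. The latter equals $\Omega_{-8}$, while the former is a multiple of $\Omega_{-4}$. Only $B(r,s-r)=B(1/8,1/4)$ from \eqref{normfactor} is the right normalizer, and it is what produces the $\Omega_{-8}^2/\Omega_{-4}$ shape.
\item Corollary \ref{44} is not a reliable cross-check. In the $N$-free normalization, $(8+8\sqrt{2})F(3/8,9/8)=2\sqrt{2(1+\sqrt{2})}\,\pi^{-1}\Omega_{-8}^2$. The printed denominator $18$ matches neither convention: it should be $1$ without $1/N$, or $16$ with it.
\end{itemize}
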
 

\begin{proof}
    By the Corollary \ref{44}, Corollary \ref{local}, and the argument of Lemma \ref{threeterm2}, we know the $L$-values for $f_1$ and $f_2$, as well as all of their twists. Thus, by Theorem \ref{lvalirr}, the value $F(1/8,3/8)$ is known. We compute this explicitly as $$4F_{1,1}=L(f_1,1)+L(f_1\otimes \phi,1)\quad\quad 4F_{2,1}=L(f_2,1)+L(f_2\otimes \psi,1),$$ where $\phi$ and $\psi$ are the unique nontrivial inner twists acting on $f_1$ and $f_2$ as in the proof of Lemma \ref{threeterm2}. From the proof of Corollary \ref{local}, $f_{2,1}=\mathbb{K}_2(1/8,11/8)(16\tau)$ and $f_{1,1}=\mathbb{K}_2(1/8,3/8)(16\tau)+16\mathbb{K}_2(9/8,11/8)(16\tau)$. The linear dependence $${\mathbb{K}_2(1/8,3/8)(16\tau)-\mathbb{K}_2(1/8,11/8)(16\tau)}=16\mathbb{K}_2(9/8,11/8)(16\tau),$$ corresponding to a contiguous relation as in Lemma \ref{contiguous}, means we can rewrite $f_{1,1}$ as $$f_{1,1}=2\mathbb{K}_2(1/8,3/8)(16\tau)-\mathbb{K}_2(1/8,11/8)(16\tau).$$ As a result, $$f_{1,1}+f_{2,1}=2\mathbb{K}_2(1/8,3/8)(16\tau),$$ and integrating either side lets us conclude $$F_{1,1}+F_{2,2}=2F(1/8,3/8),$$ or $$F(1/8,3/8)=\frac{1}{8}[L(f_1,1)+L(f_1\otimes \phi,1)+L(f_2,1)+L(f_2\otimes \psi,1)].$$ Combined with Corollary \ref{44} and Corollary \ref{local}, we determine the values of $\alpha_{1,1}$ and $\alpha_{2,1}$ are as claimed in the statement.
\end{proof}

A similar evaluation can be found for $F(1/12,5/12)$ via an identical argument. We omit the details. The family of conjugates associated to $(1/24,11/24)$ has $\dim V_k^{2,0}=2$ for some $k\neq 1$, and so further work would be necessary to make the constants in Theorem \ref{lvalirr} explicit. For the non-well-poised family $(1/24,5/24)$, a completely new method would be required.

\begin{appendix}
\section{\texorpdfstring{$L$}{L}-values for Imprimitive Examples}\label{imprim}
From the examples in this paper, the $L$-values for $\mathbb K_2(r,s)(N\tau)$ functions are essentially completely classified, in combination with \cite{EHMMII,rosen2}. The only cases not treated in either of those papers or the main text of this paper are those where the hypergeometric datum is imprimitive. For completeness, we illustrate the $L$-values for these families. The imprimitive hypergeometric data are an example of a generically reducible family of hypergeometric motives. In particular, the motives should decompose into the direct sum of hypergeometric motives with shorter length datum and specific rank 1 motives. The datum $HD(r,s)$ for $(r,s)\in \mathbb S_2$ is imprimitive if and only if $s=r+1$, $r=1$, or $s=1/2$. It is immediate from the definition that $${}_3F_2(HD(1,s),1)=\pfq{2}{1}{1/2,1/2}{1,s}{1}\quad\quad {}_3F_2(HD(r,1/2),1)=\pfq{2}{1}{1/2,r}{1,1}{1}.$$ For $(r,r+1)$, Otsubo \cite{asakuraotsubo-cmperiods} provides an evaluation up to an algebraic multiple. However, we circumvent the need for Otsubo's work using the Atkin--Lehner involution in our cases. We observe in this section that in the framework of the EHMM, the well-poised cases behave like a weight 3 analogue of the $\mathbb K_1$ function associated to ${}_2F_1(1)$ introduced in \cite{proc}, which correspond to Fermat motives. The piece of these motives corresponding to the ${}_2F_1(1)$ are a first version of the CM motives that should appear in the decomposition of primitive reducible hypergeometric motives. As a byproduct, we also obtain the $L$-values for several additional weight 3 modular forms.

Recall from \cite{proc} that the space of conjugate $\mathbb K_1$ functions has dimension $\varphi(M)/2$, rather than $\varphi(M)$ as in the primitive weight 3 case. Analogously, if $HD(r,s)$ is imprimitive, then $$\dim \mathfrak{S}_{r,s}=\varphi(M)/2.$$ As such, any modular forms constructed from these families will be CM. There is the following analogue of Theorem \ref{cons} and Theorem \ref{cons2}.

\begin{proposition}\label{consim}
    Assume that $HD(r,s)$ is imprimitive. Then there exists a weight 3 modular form $f_{r,s}$ with CM discriminant $-M$ so that $$\mathfrak{S}_{r,s}=\mathcal{V}_{f_{r,s}}.$$
\end{proposition}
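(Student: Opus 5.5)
We follow the template of Theorem \ref{cons} and Theorem \ref{cons2}, working over the decomposition $\mathfrak{S}_{r,s}=\bigoplus_k V_k^{2,0}$, where $V_k^{2,0}$ is spanned by the $\mathbb{K}_2(r_k,s_k)(N\tau)$ with $r_k=k/M$.

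\textbf{Step 1: reduce to $e=M$.} As in the proof of Theorem \ref{cons2}, we first reduce via the Atkin--Lehner involution $W_2$ to the case where the denominator of $r$ equals $M$.

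\textbf{Step 2: locate the $\varphi(M)/2$ nonzero pieces.} Since $\dim\mathfrak{S}_{r,s}=\varphi(M)/2$ and the rank-one components come in symmetric pairs, exactly half of the $V_k^{2,0}$ are nonzero, and each nonzero one is one-dimensional. Treat the three imprimitive shapes $s=r+1$, $r=1$ and $s=1/2$ separately.
\begin{itemize}
\item For $s=1/2$ or $r=1$, the period reduces to a ${}_2F_1(1)$ as displayed above. Via Gauss evaluation \eqref{gausseval}, $\mathfrak{H}(HD,1)$ then degenerates to a Fermat (CM) motive.
\item For $s=r+1$, use Otsubo's description, or the Atkin--Lehner reduction, to land in one of the previous two cases.
\end{itemize}
In each case we expect $V_k^{2,0}\neq 0$ exactly when $k$ lies in an index-two subgroup $H\subset(\Z/M\Z)^\times$, or a coset of it. Identify $H$ as the kernel of the quadratic character $\left(\frac{-M}{\cdot}\right)$ (with $M$ replaced by the relevant fundamental discriminant).

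\textbf{Step 3: construct a Hecke eigenform.} By the argument of Lemma 3.4 of \cite{rosen2} (also used in Theorem \ref{cons2}), $T_p$ sends $V_k^{2,0}$ to $V_{kp}^{2,0}$. Since each nonzero $V_k^{2,0}$ is a line, the construction of Theorem 3.6 of \cite{rosen2} applies verbatim. It produces
\[
f_{r,s}=\sum_{k\in H}\beta_k\,\mathbb{K}_2(r_k,s_k)(N\tau),
\]
a Hecke eigenform for all $p\nmid M$.

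\textbf{Step 4: CM discriminant.} By Lemma \ref{coeffcong}, $a_p(f_{r,s})=0$ whenever $p$ mod $M$ lies outside $H$. These are exactly the primes inert in $\Q(\sqrt{-M})$, so the twist by $\left(\frac{-M}{\cdot}\right)$ fixes $f_{r,s}$. Hence $f_{r,s}$ has CM by discriminant $-M$. Here one checks case by case that the $M$ occurring, with $M\mid 24$ and the datum imprimitive, give a genuine imaginary quadratic discriminant.

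\textbf{Step 5: spanning.} The twists $f_{r,s}\otimes\phi$ for $\phi\in\widehat{(\Z/M\Z)^\times}$ modify the $\beta_k$ by $\phi(k)$. Twisting by $\left(\frac{-M}{\cdot}\right)$ is trivial, so there are exactly $\varphi(M)/2$ distinct forms. They are linearly independent by character orthogonality on $H$, hence span $\mathfrak{S}_{r,s}$, giving $\mathfrak{S}_{r,s}=\mathcal{V}_{f_{r,s}}$.

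\textbf{Main obstacle.} The hard part is Step 2: identifying precisely which $k$ have $V_k^{2,0}\neq0$ and showing they form the index-two subgroup $H$ matching $-M$. The plan is to compute directly from the explicit list of imprimitive pairs in $\mathbb S_2$, using Lemma \ref{galois} for the Hodge numbers together with the eta-quotient holomorphy condition $0<r<s<3/2$. If a conceptual argument is preferred, the fallback is the Fermat-motive CM type for $B(a,b)$ from \cite{grossror}.
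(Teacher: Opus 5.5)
Your proposal follows essentially the paper's route: existence from the Hecke stability of $\mathfrak{S}_{r,s}$ together with $\dim\mathfrak{S}_{r,s}=\varphi(M)/2$, and the CM discriminant from the coefficient support in Lemma \ref{coeffcong}; your Steps 3 and 5 just spell out details the paper leaves implicit. The step you flag as the main obstacle is handled in the paper by one observation: after your reduction to $e=M$, holomorphic conjugates occur only for $r<1/2$. Since the imprimitive pairs in $\mathbb S_2$ all have $M\in\{4,8\}$, this makes $V_k^{2,0}$ nonzero exactly for $k\in\{1\}$, respectively $k\in\{1,3\}$, which is precisely $\ker\left(\frac{-M}{\cdot}\right)$, so the ``or a coset'' hedge is unnecessary.
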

\begin{proof}
    The existence statement is immediate from the fact that $\dim \mathfrak{S}_{r,s}=\varphi(M)/2,$ and the fact that $\mathfrak{S}_{r,s}$ is stabilized by the Hecke operators from \cite{rosen2}. The CM discriminant can be checked by Lemma \ref{coeffcong} and noting that holomorphic conjugates can occur only when $r<1/2.$
\end{proof}

\begin{proposition}
    The exact $L$-value for all $f_{r,s}$ constructed in Proposition \ref{consim} can be written as an explicit multiple of the Chowla--Selberg period $\Omega_{-M}$, where $M$ as always is the least common denominator of $r$ and $s$. In particular, if $\sigma$ denotes a quadratic twist of conductor 8 switching the sign of 3, we have $$L(f_{1/8,1/2},1)=\frac{(-1)^{3/8}\Omega_{-8}^2}{4\pi}\quad\quad L(f_{1/8,1/2}^\sigma,1)=\frac{-(-1)^{5/8}\Omega_{-8}^2}{4\pi} $$ 

    $$L(f_{1/4,1/2},1)=\frac{1}{4\sqrt{2}\pi}\Omega_{-4}^2\quad\quad  L(f_{1/8,9/8},1)= \frac{1}{8\sqrt{2}\pi}\Omega_{-8}^2$$
\end{proposition}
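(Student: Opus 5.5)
The plan is to imitate Examples 1 and 2, using the fact that in the imprimitive case each $\mathbb K_2(r,s)$ has an evaluation as a single gamma quotient. The reason is that ${}_3F_2(HD(r,s),1)$ collapses to a ${}_2F_1(1)$ series: for $s=1/2$ directly, for $r=1$ directly, and for $s=r+1$ the parameter $r$ cancels against $s$ after a Pochhammer shift. In each case Gauss's formula \eqref{gausseval} applies. For example,
$$\pfq{2}{1}{1/2&r}{1&1}{1}=\frac{\Gamma(1/2-r)}{\Gamma(1/2)\Gamma(1-r)}.$$
Multiplying by the normalization $\mathcal{B}(HD)$ from \eqref{normfactor} gives $F(r,s)$ as an explicit algebraic multiple of $\pi^{-1}$ times a product of gamma values. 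For $(r,r+1)$ one can alternatively reach a case with $s=1/2$ via the Atkin--Lehner involution $W_2$, as done in the proof of Theorem \ref{cons2}. That avoids Otsubo's result and explains the remark in the text.

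\textbf{Step 1: write each eigenform explicitly.} By Proposition \ref{consim}, $\dim\mathfrak S_{r,s}=\varphi(M)/2$, so each eigenform is determined by Hecke action on a small space. I would compute the $T_p$ action on Fourier expansions to get $f_{r,s}=\sum_j \beta_j\mathbb K_2(r_j,s_j)(N\tau)$. For $M=4$ this space is one-dimensional. For $M=8$ it is two-dimensional, so $f$ and $f^\sigma$ differ by the sign of the second coefficient $\beta$.

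\textbf{Step 2: pass to $L$-values.} Applying \eqref{lval} termwise gives
$$L(f_{r,s}\otimes\phi,1)=\sum_j\phi(j)\beta_jF(r_j,s_j).$$

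\textbf{Step 3: reduce the gamma quotients to Chowla--Selberg periods.} Each gamma quotient from Step 1 has to be rewritten as an algebraic multiple of $\Omega_{-M}^2$. To do this I would apply the reflection, duplication and multiplication formulas, in the same spirit as Corollary \ref{dixon2} and Lemma \ref{betaval}. The Appendix \ref{tables} tables, extracted from \cite{proc}, record which beta values $B(a,b)$ with denominator $8$ are multiples of $\Omega_{-8}$ or $\Omega_{-4}$. For $M=8$ the relevant gammas are $\Gamma(1/8),\Gamma(3/8),\Gamma(5/8),\Gamma(7/8)$, and $\Omega_{-8}$ is built from exactly these, with $h=1$. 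CM theory (Damerell) already guarantees that the final combination lies in $\bar{\Q}\,\pi^{-1}\Omega_{-M}^2$, which serves as a consistency check.

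\textbf{Main obstacle.} I expect the hardest part to be bookkeeping the exact algebraic constants. This includes powers of $2$, the roots of unity such as $(-1)^{3/8}$ that enter through $\beta_j$ and the twist $\sigma$, and the normalizing factor $2^{1-4r}/N$ in \eqref{frs2}. For the twisted pair in particular, I would verify the final constants numerically against computed $L$-values of the LMFDB forms before writing them down.
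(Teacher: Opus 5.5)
Your overall strategy is the paper's, and it works as you describe for $(1/4,1/2)$ and $(1/8,1/2)$. That strategy is to write the eigenform as an explicit combination of $\mathbb{K}_2$ functions, apply \eqref{lval}, evaluate each $F(r_j,s_j)$ by Gauss's formula \eqref{gausseval} after the imprimitive cancellation, and convert the beta values to Chowla--Selberg periods using Table~\ref{tab:placeholder3}. For $(1/8,1/2)$ the eigenform and its twist are $\mathbb{K}_2(1/8,1/2)(16\tau)\pm 2i\,\mathbb{K}_2(3/8,1/2)(16\tau)$. The gap is in the case $s=r+1$, which is exactly what $L(f_{1/8,9/8},1)$ requires. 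There is no cancellation there, since $(r)_k/(r+1)_k=r/(r+k)$, so
$${}_3F_2(HD(r,r+1),1)=\sum_{k\ge 0}\frac{(1/2)_k^2}{k!^2}\,\frac{r}{r+k}=r\int_0^1 t^{r-1}\,{}_2F_1(1/2,1/2;1;t)\,dt .$$
This is a Mellin moment of the complete elliptic integral, not a ${}_2F_1(1)$; for example, at $r=1/2$ it equals $4G/\pi$ with $G$ Catalan's constant. So $F(1/8,9/8)$ and $F(3/8,11/8)$ are not Gauss-evaluable. That is precisely why the paper needs either Otsubo or the Atkin--Lehner involution, so the latter is not an optional ``alternative''.

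You have also misidentified where $W_2$ lands. On the eta quotient it swaps $\eta(\tau/2)$ and $\eta(2\tau)$, which sends $(r,s)\mapsto(s-r,s)$ and therefore preserves $s$. Since $s=r+1>1$, you can never reach $s=1/2$. What you actually get is $\mathfrak{S}_{1/8,9/8}|W_2=\mathfrak{S}_{1,9/8}$, the family $\{(1,9/8),(1,11/8)\}$ with $r=1$. There ${}_3F_2(HD(1,s),1)={}_2F_1(1/2,1/2;s;1)=\Gamma(s)\Gamma(s-1)/\Gamma(s-1/2)^2$, which is Gauss-evaluable. Because $W_2$ commutes with the Hecke operators, this gives the same newform. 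The paper computes its $L$-value from $f_{1,9/8}=\tfrac12\mathbb{K}_2(1,9/8)(2\tau)+\tfrac12\mathbb{K}_2(1,11/8)(2\tau)$. The values then reduce to $B(1/8,1/8)$ and $B(1/8,3/8)$, both algebraic multiples of $\Omega_{-8}$ by Table~\ref{tab:placeholder3}. If you replace your direct-collapse claim for $s=r+1$ with this step, the rest of your plan, including the numerical bookkeeping of constants, goes through.
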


\begin{remark}
    Note that the modular form $f_{1/8,9/8}$ has no nontrivial quadratic twists fixing the level, and so we only obtain one $L$-value for this form.
\end{remark}

\begin{proof}
    The basic idea is the same as \cite{rosen2}. For example, we can write $f_{1/4,1/2}=\mathbb{K}_2(1/4,1/2)(8\tau)$, and so the $L$-value is found by evaluating $$F(1/4,1/2)=\frac{1}{8}B(1/4,1/4)\pfq{2}{1}{1/2,1/4}{1,1}{1}=\frac{1}{8}\frac{B(1/4,1/4)^2}{B(1/4,3/4)}$$ using the cancellation and Gauss evaluation \eqref{gausseval}. In the Appendix, we see that $B(1/4,1/4)=\sqrt{2}\Omega_{-4}$, and by the reflection formula, $B(1/4,3/4)=\sqrt{2}\pi$, and so the claim for this $L$-value follows. The argument for the two $(1/8,1/2)$ cases is similar, except we must first write $f_{1/8,1/2}$ as the linear combination $\mathbb{K}_2(1/8,1/2)(16\tau)\pm 2i\mathbb{K}_2(3/8,1/2)(16\tau)$. Apply a similar analysis to above and the results follow. Finally, for $(1/8,9/8)$, there is not an obvious evaluation formula, but we can circumvent this issue by observing that $\mathfrak{S}_{1/8,9/8}|W_2=\mathfrak{S}_{1,9/8}$, and so $f_{1/8,1/2}=f_{1,9/8}$. We compute $f_{1,9/8}=\mathbb{K}_2(1,9/8)(2\tau)/2+\mathbb{K}_2(1,11/8)(2\tau)/2$. From here, we need to compute $$F(1,9/8)=\frac{1}{16}B(1/8,9/8)\pfq{2}{1}{1/2,1/2}{1,9/8}{1}=B(1/8,9/8)\frac{B(1/8,3/8)}{B(1,1/8)}.$$ By the functional equation, $2B(1/8,9/8)=B(1/8,1/8)$. From Table \ref{tab:placeholder3}, both $B(1/8,1/8)$ and $B(1/8,3/8)$ are equal to $\Omega_{-8}$ up to an algebraic number, which is ca easily be computed by hand. Repeating this proves for $F(1,11/8)$ completes the proof.
\end{proof}

    \section{Converting Beta Values to Chowla--Selberg Periods}\label{tables}

The general rule in the study of periods, especially  $L$-values, is that the transcendental part is predictable following the conjectures of Deligne \cite{deligne-l-values}, especially in the CM cases, where we have the Gross--Deligne conjecture which is proved in many cases; refer to \cite{grossror} for a survey. By contrast, the algebraic part encodes mysterious arithmetic information, which for elliptic curves is the content of the generalized Birch and Swinnerton--Dyer conjecture \cite{bsd}, and more generally the Bloch--Kato conjecture. Even the simplest cases of these conjectures remain unknown. On a smaller scale, we encounter the same problem when examining periods involving values of the beta and gamma function. In particular, the transcendental part of the period can be proven by theoretical means, such as \cite{grossror} in our cases. For example, Theorem \ref{lvalirr} tells us that the values $F_{i,j}$ are a multiple of some Chowla--Selberg period, finding the algebraic part is a difficult task. Furthermore, given a beta value or product of beta values, determining how to write this as a multiple of the expected transcendental part can be an irritating process, especially if the expected transcendental part is not immediately obvious. Even when the transcendental part is obvious, actually proving the value of the algebraic part can require some ingenuity. We provide a demonstration below.

\begin{Lemma}\label{betaval}
    We have that $$B(1/4,1/6)^2=2^{1/3}\sqrt{-\frac{27}{4} + \frac{9\sqrt{3}}{2}}\cdot \Omega_{-3}^2.$$
\end{Lemma}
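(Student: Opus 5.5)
The plan is to reduce both sides to an algebraic multiple of $\pi^{-2}\Gamma(1/3)^6$ using only the reflection, duplication and multiplication formulas of Section \ref{spcfun}, and then compare the algebraic constants. First we simplify the right hand side. For $D=3$ we have $h'(-3)=1/3$, so $1/2h'=3/2$ and \eqref{chowl} gives $\Omega_{-3}=\sqrt{\pi}\,(\Gamma(1/3)/\Gamma(2/3))^{3/2}$. Reflection gives $\Gamma(2/3)=2\pi/(\sqrt3\,\Gamma(1/3))$, hence
$$\Omega_{-3}^2=\pi\frac{\Gamma(1/3)^3}{\Gamma(2/3)^3}=\frac{3\sqrt3}{8\pi^2}\Gamma(1/3)^6.$$

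Next we treat the left hand side, $B(1/4,1/6)^2=\Gamma(1/4)^2\Gamma(1/6)^2/\Gamma(5/12)^2$. The only nonstandard value is $\Gamma(5/12)$, and we eliminate it together with the auxiliary $\Gamma(1/12)$ and $\Gamma(7/12)$ using three relations:
\begin{itemize}
\item multiplying by $3$ at $x=1/12$ gives $\Gamma(1/12)\Gamma(5/12)\Gamma(3/4)=2\pi\,3^{1/4}\Gamma(1/4)$;
\item duplication at $x=1/12$ gives $\Gamma(1/12)\Gamma(7/12)=2^{5/6}\sqrt\pi\,\Gamma(1/6)$;
\item reflection gives $\Gamma(5/12)\Gamma(7/12)=\pi/\sin(5\pi/12)$.
\end{itemize}
Solving the last two for $\Gamma(1/12)$ in terms of $\Gamma(5/12)$ and substituting into the first yields $\Gamma(5/12)^2$ as an explicit algebraic multiple of $\sqrt\pi\,\Gamma(1/4)/(\Gamma(1/6)\Gamma(3/4))$. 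Using $\Gamma(3/4)=\sqrt2\pi/\Gamma(1/4)$, this becomes an algebraic multiple of $\pi^{-1/2}\Gamma(1/4)^2/\Gamma(1/6)$.

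The $\Gamma(1/4)^2$ then cancels, so $B(1/4,1/6)^2$ is an algebraic multiple of $\sqrt\pi\,\Gamma(1/6)^3$. Finally we apply duplication at $x=1/6$, which gives $\Gamma(1/6)=2^{-1/3}\sqrt{3/\pi}\,\Gamma(1/3)^2$. This turns the expression into an algebraic multiple of $\pi^{-1}\Gamma(1/3)^6$ times a further power of $\pi$, and the powers of $\pi$ should combine to $\pi^{-2}$, matching the form of $\Omega_{-3}^2$ above.

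The transcendental part is thus forced. The main obstacle is the bookkeeping of the algebraic constant: the powers of $2$ and $3$, the factor $\sin(5\pi/12)=(\sqrt6+\sqrt2)/4$ (or its square), and the final identification with $2^{1/3}\sqrt{-27/4+9\sqrt3/2}$. For that step the plan is to square both candidate constants and compare them in $\Q(\sqrt3)$, using $\sin^2(5\pi/12)=(2+\sqrt3)/4$, and to do a numerical check to fix the sign and branch of the square root.
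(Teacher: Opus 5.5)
Your reduction is sound. It is a more direct version of the paper's proof, which uses the same three relations but routes through the auxiliary value $B(1/2,1/6)$. The genuine gap is your final step, identifying the constant with $2^{1/3}\sqrt{-27/4+9\sqrt{3}/2}$: it cannot be carried out, because that constant is wrong.

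First, fix a slip. Eliminating $\Gamma(1/12)$ and $\Gamma(7/12)$ as you describe gives
\[
\Gamma(5/12)^2=\frac{2^{1/6}3^{1/4}\pi^{3/2}\,\Gamma(1/4)}{\sin(5\pi/12)\,\Gamma(1/6)\,\Gamma(3/4)},
\]
with $\pi^{3/2}$ rather than $\sqrt{\pi}$. So $B(1/4,1/6)^2$ is a multiple of $\pi^{-1/2}\Gamma(1/6)^3$, not $\sqrt{\pi}\,\Gamma(1/6)^3$, and the duplication step lands exactly on $\pi^{-2}\Gamma(1/3)^6$ with no stray power of $\pi$. Tracking the constants then gives
\[
B(1/4,1/6)^2=2^{-2/3}3^{5/4}\sin(5\pi/12)\,\pi^{-2}\Gamma(1/3)^6=2^{1/3}3^{-1/4}(\sqrt{6}+\sqrt{2})\,\Omega_{-3}^2\approx 3.699\,\Omega_{-3}^2,
\]
whereas $2^{1/3}\sqrt{-27/4+9\sqrt{3}/2}\approx 1.287$. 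In your planned comparison of squares in $\Q(\sqrt{3})$, after removing $2^{2/3}$, you would be comparing $4+8\sqrt{3}/3$ with $-27/4+9\sqrt{3}/2$. Your numerical check would then expose a discrepancy by a factor of about $2.87$, not a sign or branch issue, since every quantity here is a positive real. Direct numerics confirm this: $B(1/4,1/6)^2\approx 89.98$ and $\Omega_{-3}^2\approx 24.33$.

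The error is in the statement itself, and its origin is visible in the paper's proof. The proof arrives at the correct values
\[
B(1/2,1/6)^2=2^{1/3}\sqrt{3}\,\Omega_{-3}^2,\qquad \frac{B(1/2,1/6)^2}{B(1/4,1/6)^2}=\sqrt{-9/4+3\sqrt{3}/2},
\]
but then multiplies these instead of dividing. The correct identity is
\[
B(1/4,1/6)^2=\frac{2^{1/3}\sqrt{3}}{\sqrt{-9/4+3\sqrt{3}/2}}\,\Omega_{-3}^2=\frac{3\cdot 2^{1/3}}{\sqrt{-27/4+9\sqrt{3}/2}}\,\Omega_{-3}^2 .
\]
This agrees with the constant your method produces. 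The printed constant equals the correct one multiplied by $-9/4+3\sqrt{3}/2\approx 0.348$. With the power of $\pi$ fixed, your argument proves this corrected identity, but no argument can prove the lemma as printed.
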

One could prove the quotient $B(1/4,1/6)/\Omega_{-3}$ is algebraic using the Deligne--Koblitz--Ogus relations \cite[Appendix]{deligne-l-values}, but this would not provide the exact algebraic relation which is crucial for our computations of $L$-values.
\begin{proof}
    We use the three relations in Section \ref{gamma}. We observe that $$B(1/2,1/6)^2/\Omega_{-3}^2=\frac{\sqrt{3}\Gamma(1/6)^2\Gamma(2/3)^2}{2\pi \Gamma(1/3)^2}=2^{1/3}\sqrt{3}$$ after an application of the duplication formula. Finally, we note that $$B(1/2,1/6)^2/B(1/4,1/6)^2=\frac{\pi \Gamma(5/12)^2}{\Gamma(1/4)^2 \Gamma(2/3)^2}.$$ For the final part, we use multiplication by 3 to get that $$\Gamma(1/12)\Gamma(5/12)\Gamma(3/4)=2\pi\cdot  3^{1/4}\Gamma(1/4)$$ and replace one of the $\Gamma(1/4)$ with this so that the other $\Gamma(1/4)$ will cancel with $\Gamma(3/4)$ via the reflection formula. Canceling and applying the reflection and duplication formula repeatedly, compute 
  \begin{align*}
      B(1/2,1/6)^2/B(1/4,1/6)^2&=\frac{2\cdot  3^{1/4}\pi^2 \Gamma(5/12)^2}{\Gamma(1/4)\Gamma(1/12)\Gamma(5/12)\Gamma(3/4) \Gamma(2/3)^2}\\&
      =\frac{2\cdot 3^{1/4}\pi\Gamma(5/12)}{\sqrt{2}\Gamma(1/12)\Gamma(2/3)^2}=\frac{2\cdot 3^{1/4}(-1 + \sqrt{3})\Gamma(5/12)\Gamma(11/12)}{4\Gamma(2/3)^2}\\&
      =2\cdot 3^{1/4}(-1 + \sqrt{3})\cdot 2^{1/6} \sqrt{\pi} \frac{\Gamma(5/6)}{\Gamma(2/3)^2}\\&
      =2\cdot 3^{1/4}(-1 + \sqrt{3})\cdot 2^{1/6} \cdot\sqrt{3}/2\sqrt{\pi} \frac{\Gamma(5/6)\Gamma(1/3)}{\pi\Gamma(2/3)}\\&
      =2\cdot 3^{1/4}(-1 + \sqrt{3})\cdot 2^{1/6}\cdot\sqrt{3}/2\cdot 2^{1/3}=\sqrt{-\frac{9}{4} + \frac{3\sqrt{3}}{2}}.
  \end{align*}
\end{proof}

For convenience of exposition in this paper, we list beta values of the form $B(i/24,j/24)$ which are multiples of Chowla--Selberg periods in Table \ref{tab:placeholder3}. We do not claim this is an exhaustive list. The transcendental parts listed in the table are provable, either via \cite{proc}, Lemma \ref{betaval}, or ``trivial'' computations. We consider the functional equation, the reflection formula, and the duplication formula to be trivial because, as illustrated in the final computation of Lemma \ref{betaval}, we can immediately tell when they can be used in all cases, and so can be implemented algorithmically for example in Mathematica or Maple.

Some of these computations are easy; for instance,
\begin{align*}
     B(1/8,3/8)=2^{3/4} \Omega_{-8}\\
    B(1/6,1/6)=2^{5/6}3^{1/4}\Omega_{-3}\\
     B(1/4,1/4)=\sqrt{2}\Omega_{-4}
\end{align*}
are easy to show using the trivial properties, or at worst multiplication by 3.

\begin{table}[H]
    \centering
    \begin{tabular}{|c|ccc|}
        \hline Period & $(r,s)$ & Beta value & Equivalents \\\hline
        $\Omega_{-24}$ & $(11/12,35/24)$ & $B(1/24,11/12)$ & $2^{5/6}B(1/12,11/24)$  \\
       & $(11/12,35/24)$ & $B(1/24,11/12)$ &$2^{-1/12}B(1/24,11/24)$ \\
         & $(5/12,29/24)$ & $B(7/24,5/12)$ & $2^{1/6}B(5/24,7/12)$ \\
          & $(5/12,29/24)$ & $B(7/24,5/12)$ & $2^{-7/12}B(7/24,5/24)$ \\
         & $(1/24,5/6)$ & $B(1/24,7/24)$ & $2^{1/3}(2+\sqrt{2})B(5/24,11/24)$  \\
         & $(1/24,7/12)$ & $B(1/24,1/24)$ & $2^{5/6}\cot(\pi/24)B(11/24,11/24)$  \\
         & $(5/24,11/12)$ & $B(5/24,5/24)$ & $2^{1/6}\cot(5\pi/24)B(7/24,7/24)$  \\
         & $(1/24,29/24)$ & $B(1/24,2/3)$ & $2^{1/3}(1+\sqrt{3})B(5/24,1/3)$  \\
         & $(1/24,29/24)$ & $B(1/24,2/3)$ & $2^{1/3}(2+\sqrt{6})B(11/24,2/3)$ \\
         & $(1/24,29/24)$ & $B(1/24,2/3)$ & $\cos(5\pi/24)\csc(\pi/24)B(17/24,1/3)$\\\hline
         $\Omega_{-8}$& $(1/8,1)$  & $B(1/8,3/8)$ & $2^{3/4}\sin(\pi/8)B(1/8,1/8)$  \\
         & $(1/8,1)$  & $B(1/8,3/8)$ & $2^{-1/4}\csc(\pi/8)B(3/8,3/8)$ \\
         & $(1/8,1)$ & $B(1/8,3/8)$ & $\sec(\pi/8)B(1/8,1/2)$  \\
        & $(1/8,1)$ & $B(1/8,3/8)$ &  $\csc(\pi/8)B(3/8,1/2)$ \\
        & $(1/8,1)$ & $B(1/8,3/8)$ &  $2^{3/4}B(3/8,1/4)$ \\
         & $(1/8,1)$ & $B(1/8,3/8)$ & $2^{1/4}B(1/8,3/4)$  \\\hline
         $\Omega_{-3}$& $(1/6,5/6)$ & $B(1/6,1/6)$ & $2^{1/3}\sqrt{3}B(1/3,1/3)$  \\
          & $(1/6,5/6)$ & $B(1/6,1/6)$  & $2\sqrt{3}B(1/6,1/3)$  \\
         & $(1/6,5/6)$ & $B(1/6,1/6)$ & $2^{-1/6}B(1/12,7/12)$  \\
         & $(1/24,17/24)$ & $B(1/24,1/6)$ & $\csc(\pi/4)\sin(\pi/4)B(19/24,1/6)$ \\
         & $(1/24,17/24)$ & $B(1/24,1/6)$ &$2\sin(5\pi/4)B(1/24,19/24)$\\
         & $(7/24,23/24)$ &$B(7/24,1/6)$& $\cos(\pi/4)\sec(5\pi/4)B(13/24,1/6)$ \\
         & $(7/24,23/24)$ &$B(7/24,1/6)$& $2\cos(\pi/24) B(7/24,13/24)$\\\hline
         $\Omega_{-4}$ & $(1/4,1)$ & $B(1/4,1/4)$ & $\sqrt{2}B(1/4,1/2)$ \\
          & $(1/4,1)$ & $B(1/4,1/4)$ & $2^{3/4}\sin(\pi/8) B(1/8,1/4)$\\
         & $(1/4,1)$ & $B(1/4,1/4)$ & $\sqrt{1+\sqrt{2}} B(5/8,1/4)$\\
          & $(1/4,1)$ & $B(1/4,1/4)$ & $2^{3/4}\sin(\pi/8) B(1/8,1/4)$\\
         & $(1/4,1)$ & $B(1/4,1/4)$ & $2^{-1/4} B(1/8,5/8)$\\
         & $(1/12,2/3)$ & $B(1/12,1/12)$ & $2^{2/3}(2+\sqrt{3}) B(5/12,5/12)$ \\
         & $(1/24,23/24)$& $B(1/24,5/12)$ & $2^{1/3}\cos(\pi/4)\sec(5\pi/4) B(5/24,1/12)$ \\
         & $(1/24,23/24)$& $B(1/24,5/12)$  & $\cot(\pi/4)B(13/24,5/12)$\\
         & $(1/24,23/24)$& $B(1/24,5/12)$ & $2^{1/3}\cos(\pi/4)\csc(5\pi/4)B(17/24,1/12)$  \\ 
         & $(1/24,23/24)$& $B(1/24,5/12)$ &  $2\sqrt{2}\cos(\pi/24)(1+\sqrt{3})^{-1}B(1/24,13/24)$  \\          
         &$(1/24,23/24)$&$B(1/24,5/12)$&$2^{-2/3}\cos(\pi/24)B(5/24,17/24)$ \\\hline
    \end{tabular}
    \caption{Beta values that are algebraic multiples of a Chowla--Selberg period. The equivalents are algebraic multiples of the beta value listed under Beta value obtained \textit{trivially}, that is using only the functional equation, reflection formula, and duplication formula. Note that by the theory of CM abelian varieties, if $B(r,s)$ is a multiple of $\Omega_{-D}$, then $B(1-r,1-s)$ is a multiple of $\pi/\Omega_{-D}$.}
    \label{tab:placeholder3}
\end{table}

 However the remaining cases in Table \ref{tab:placeholder3} are substantially harder. As shown in Lemma \ref{betaval},  such proofs require ingenuity that Mathematica and Maple seem unable to determine on their own. It would be interesting to try and find a more effective algorithm that would enable more efficient computation for such problems. 

In this appendix, we only address relations with Chowla--Selberg periods. However, it in principle the same ideas should work for arbitrary Gross--Deligne periods. For example, consider the beta value $B(1/5,3/5)=\frac{\Gamma(1/5)\Gamma(3/5)}{\Gamma(4/5)}$. This should be a period for some field $K$ contained in $\Q(\zeta_5)$. The only subfield of $\Q(\zeta_5)$ is $\Q(\sqrt{5})$, and so the period cannot be any Chowla--Selberg period, which are associated to imaginary quadratic fields. As such, this should be equal to one of the Gross--Deligne periods. In practice, the main obstacle for computing this is that the function $\varepsilon(a/\sigma)$ from Conjecture \ref{grossdeligne-l-values} is not very explicit. We leave calculations involving Gross--Deligne periods to future work. Compare also to \cite{asakuraotsubo-cmperiods}.

\end{appendix}
\bibliographystyle{plain} 
\bibliography{mixed_cm}{}

\end{document}